\title{\vspace*{-1pc}%
       Shift-tail equivalence and an unbounded representative of the 
        Cuntz-Pimsner extension.}
\author{Magnus Goffeng$^*$, Bram Mesland$^{**}$, Adam Rennie$^\dag$ \\[4pt]
${}^*$Department of Mathematical Sciences,\\ Chalmers University of Technology and the University of Gothenburg,\\ SE-412 96 Gothenburg, Sweden\\[3pt]
${}^{**}$Institut f\"ur Analysis, Leibniz Universit\"at Hannover, Welfengarten 1,\\ 30167 Hannover, Germany
\\[3pt]
${}^\dag$	School of Mathematics and Applied Statistics,
University of Wollongong,\\
Northfields Ave 2522, Australia}
\def\section{\@startsection{section}{1}{\z@}{-3.5ex plus -1ex minus
  -.2ex}{2.3ex plus .2ex}{\large\bf}}
\def\subsection{\@startsection{subsection}{2}{\z@}{-3.25ex plus -1ex
  minus -.2ex}{1.5ex plus .2ex}{\normalsize\bf}}
\numberwithin{equation}{section} 
\theoremstyle{plain} 
\newtheorem{thm}{Theorem}[section]
\newtheorem{lemma}[thm]{Lemma}
\newtheorem{prop}[thm]{Proposition}
\newtheorem{corl}[thm]{Corollary}
\newtheorem{ass}{Assumption}
\theoremstyle{definition} 
\newtheorem{defn}[thm]{Definition}
\newtheorem*{conv}{Convention}
\theoremstyle{remark} 
\newtheorem{rmk}[thm]{Remark}
\newtheorem{ex}[thm]{Example}
\DeclareMathOperator{\diag}{diag} 
\DeclareMathOperator{\Dom}{Dom}   
\DeclareMathOperator{\End}{End}   
\DeclareMathOperator{\Hom}{Hom}   
\DeclareMathOperator{\Id}{Id}     
\DeclareMathOperator{\supp}{supp} 
\newcommand{\extcls}{[\operatorname{ext}]}  
\newcommand{\A}{\mathcal{A}}  
\newcommand{\C}{\mathbb{C}}   
\newcommand{\Cc}{\mathcal{C}} 
\newcommand{\D}{\mathcal{D}}  
\newcommand{\F}{\mathcal{F}}  
\newcommand{\K}{\mathcal{K}}  
\newcommand{\N}{\mathbb{N}}   
\newcommand{\op}{{\rm op}}       
\newcommand{\ox}{\otimes}     
\renewcommand{\O}{\mathcal{O}}  
\newcommand{\T}{\mathcal{T}} 
\newcommand{\Z}{\mathbb{Z}}   
\newcommand{\stroke}{\mathbin|}     
\def\pairL_#1(#2|#3){{}_{#1}(#2\stroke#3)} 
\def\pairR(#1|#2)_#3{(#1\stroke#2)_{#3}} 
\def\scal<#1|#2>{\langle#1\stroke#2\rangle} 
\newbox\ncintdbox \newbox\ncinttbox 
\theoremstyle{plain}
\newtheorem{thm*}{Theorem}
\newcommand{\phimod}{\Xi_A} 
\newcommand{\Fock}{\mathcal{F}_E} 
\newcommand{\algFock}{\mathcal{F}_E^{\textnormal{alg}}} 
\newcommand{\core}{\mathcal{C}_E} 
\newcommand{\topop}{\mathfrak{q}} 
\newcommand{\Dsla}{\slashed{D}}
\newcommand{\aE}{{}_{\alpha}E}
\newcommand{\aXi}{{}_{\alpha}\Xi}
\begin{document}

\maketitle

\vspace{-2pc}

\begin{abstract}
We show how the fine structure in shift-tail equivalence, appearing
in the noncommutative geometry of Cuntz-Krieger algebras developed by the first two listed authors, 
has an analogue in a wide range of other Cuntz-Pimsner
algebras. To illustrate this structure, and where it appears, we produce an unbounded representative
of the defining extension of the Cuntz-Pimsner algebra constructed from a finitely generated projective
bi-Hilbertian bimodule, extending work by the third listed author with
Robertson and Sims. As an application, our construction yields new spectral triples for 
Cuntz- and Cuntz-Krieger algebras and for Cuntz-Pimsner algebras associated to vector bundles 
twisted by an equicontinuous $*$-automorphism.
\end{abstract}

\parskip=5pt
\parindent=0pt

\addtocontents{toc}{\vspace{-1pc}}
\section*{Introduction}
\label{sec:intro}

In this paper we study the noncommutative geometry of Cuntz-Pimsner algebras. 
The end product is an unbounded Kasparov module representing the defining extension 
which reflects the dynamics encoded in the  Cuntz-Pimsner algebra. 
In addition to the important examples of Cuntz-Krieger algebras which arise from sub-shifts of 
finite type, 
Cuntz-Pimsner algebras also include crossed products 
by $\Z$, topological graph algebras and Exel crossed products.

Pimsner's construction \cite{Pimsner} associates to a given bimodule 
$E$ (or $C^*$-correspondence) over a $C^{*}$-algebra $A$ a new $C^{*}$-algebra $\O_{E}$, 
which is to be viewed as the crossed product 
of $A$ by $E$. This viewpoint is in line with the 
idea that an $A$-bimodule $E$ is a generalisation of the notion of $*$-endomorphism, 
and a $*$-endomorphism of a commutative $C^{*}$-algebra corresponds to a 
continuous map of the underlying space. As such, bimodules can be viewed as 
discrete time dynamical systems over $A$. See \cite{deaacaneudoaod} for a 
detailed discussion supporting this point of view.

By construction, the Cuntz-Pimsner algebra $\O_{E}$ associated with a 
finitely generated projective Hilbert-bimodule $E$ 
over a $C^{*}$-algebra $A$ is the quotient in its Toeplitz extension, 
a short exact sequence of $C^{*}$-algebras
\begin{equation}
\label{pimdefext}
0\to \K_A(\F_{E})\to \T_{E} \to \O_{E} \to 0.
\end{equation}
We call the extension  \eqref{pimdefext} the \emph{defining extension} of $\O_E$. 
Here $\T_{E}$ is the algebra of Toeplitz operators on the Fock module $\F_{E}$. 
The $C^*$-algebra $\K_A(\F_{E})$ of $A$-compact operators is Morita equivalent to $A$. 
For $A$ nuclear, the extension \eqref{pimdefext} is semisplit and defines a 
distinguished class $\extcls\in KK^{1}(\O_{E},A)$, see \cite{Kas1}. 
Pimsner showed that the Toeplitz algebra $\T_{E}$ is $KK$-equivalent to $A$, 
and the six term exact sequences relate the $K$-theory and $K$-homology 
of $\O_E$ with that of $A$ through the Pimsner sequence (see \cite[Theorem 4.8]{Pimsner}).

The class of the defining extension $\extcls$ associated to finitely generated projective
bi-Hilbertian modules satisfying an additional technical requirement was represented by
a Kasparov module in \cite{RRS}. The work in \cite{RRS} gives a starting point 
for studying the noncommutative geometry of the corresponding Cuntz-Pimsner algebras, 
i.e. their spectral triples or more generally 
unbounded Kasparov modules \cite{BJ}.

A detailed study of the noncommutative geometry of Cuntz-Krieger algebras $O_{\pmb{A}}$ associated to a 
$\{0,1\}$ matrix $\pmb{A}$ was presented in \cite{GM}. Using
the groupoid model for these algebras, a new ingredient in the form of a function on the groupoid was
introduced and used to construct unbounded Kasparov modules.  
In the present paper we utilise the fact that
Cuntz-Krieger algebras admit a Cuntz-Pimsner model 
over the commutative algebra $C(\Omega_{\pmb{A}})$, where $\Omega_{\pmb{A}}$ is the underlying 
subshift of finite type (see \cite{deaacaneudoaod, W})
to emulate the ideas in \cite{GM} for a wider class of Cuntz-Pimsner algebras. 
In particular, we show that the
noncommutative geometries that were described in \cite{GM},  in fact arise from the 
extension \eqref{pimdefext} associated to this particular model. Thus, a key idea in this paper is to place the construction for 
Cuntz-Krieger algebras in \cite{GM} into the framework for Cuntz-Pimsner algebras of \cite{RRS}.

Algebras that model discrete time dynamical systems quite generally carry a dual circle action. 
The case of bimodule dynamics is no different, 
and every Cuntz-Pimsner algebra carries a canonical circle action inducing a $\Z$-grading. 
While the Pimsner sequence in $KK$-theory relates $KK$-groups of $\O_{E}$ with 
the $KK$-groups of $A$, the Pimsner-Voiculescu sequence in $KK$-theory relates $KK$-groups of $\O_{E}$ with 
those of the core $\Cc_{E}$ (the fixed point algebra for the circle action). 
The literature has mostly focussed on Kasparov cycles associated with the core \cite{careyetal11,gabrielgrensing,paskrennie}.
In fact, if we consider the $\Cc_{E}$ correspondence $E\ox_A\Cc_{E}$ we find that
$\O_{E\ox_A\Cc_{E}}\cong \O_E$ so that
\begin{equation}
\label{pimdefext-core}
0\to \K_{\Cc_{E}}(\F_{E\ox_A\Cc_{E}})\to \T_{E\ox_A\Cc_{E}} \to \O_{E} \to 0
\end{equation}
is exact, \cite{Pimsner}.
The unbounded model for the Kasparov class of the exact sequence \eqref{pimdefext-core}
is well understood 
under mild technical assumptions (the spectral subspace condition, \cite{careyetal11}) 
and arises from the number operator constructed from the dual $\Z$-grading. 
See the discussion in Remark \ref{circleaction} below.
The case of Cuntz-Krieger algebras, and in particular the results of \cite[Section 3.4]{GM}, 
show that it is impossible to describe nontrivial classes in $K^{1}(\O_{\pmb{A}})$ as Kasparov products 
of the extension \eqref{pimdefext-core} with classes in $K^{0}(\Cc_{\pmb{A}})$ by general methods. 
In contrast, in  Theorem \ref{comparingtogm} of the present paper we show that the surjective map 
$K^{0}(C(\Omega_{\bold{A}}))\to K^{1}(O_{\bold{A}})$ constructed in \cite[Theorem 5.2.3, Remark 5.2.6]{GM} 
is in fact the boundary map in the Pimsner sequence arising from the model of $O_{\bold{A}}$ as a 
Cuntz-Pimsner algebra over $C(\Omega_{\bold{A}})$. This shows that the exact sequences associated to the two extensions  \eqref{pimdefext}
and \eqref{pimdefext-core} behave very differently.

Our method employs a refined aspect of the dynamics, arising as part of the data of 
shift-tail equivalence in the Cuntz-Krieger case, and gives further grading information 
needed to assemble an unbounded Kasparov module splitting the extension 
\eqref{pimdefext} and representing the class $\extcls\in KK^1(\O_E,A)$. This
grading is defined on an important module constructed
from the algebra $\O_E$ in \cite{RRS}. The technical novelty of this paper is that of a 
\emph{depth-kore operator}\footnote{Kore is not only phonologically
the same as core, as in gauge fixed point subalgebra, 
but also another name for Persephone -- queen of the Underworld.}. 
The depth-kore operator $\kappa$ detects ``depth" relative to the inductive limit structure of
the core and provides the missing piece when assembling a Dirac operator from 
the number operator associated with the circle action.

The technical setup for this paper is inspired by \cite{KajPinWat}. We let $E$ denote 
a finitely generated projective bi-Hilbertian bimodule
over a unital $C^*$-algebra $A$. That is, $E$ is an $A$-bimodule, 
finitely generated and projective in both actions. Moreover, it is equipped with left and right $A$-valued 
inner products ${}_A(\cdot|\cdot)$, $(\cdot|\cdot)_A$ both making $E$ full, and for which the 
respective actions are faithful and adjointable. We place additional technical assumptions on $E$ 
involving the asymptotic properties of the Jones-Watatani indices of the tensor powers $E^{\otimes_A \ell}$ 
(see Assumption \ref{ass:one} on page \pageref{ass:one} and Assumption \ref{ass:two} on page \pageref{ass:two}). 
These assumptions are satisfied in a large class of examples, with no known  counter-examples at the present time. 
The examples for which the assumptions have been verified include Cuntz-Krieger algebras in the model over the 
one-sided shift space, crossed products by $\Z$ and graph $C^*$-algebras of primitive graphs.

We let $\O_E$ denote the Cuntz-Pimsner algebra constructed from $E$, $\Phi_\infty:\O_E\to A$ the conditional expectation 
constructed in \cite{RRS} assuming Assumption \ref{ass:one} and $\phimod$ the completion of $\O_E$ as 
an $A$-Hilbert module in the conditional expectation $\Phi_\infty$. Under the Assumptions \ref{ass:one} and 
\ref{ass:two} we prove the following theorem. It appears as Theorem \ref{definunbdd} below.

\begin{thm*}
\label{mainthm} 
The $(\O_{E},A)$-bimodule $\Xi_{A}$ decomposes as a direct sum of finitely generated projective $A$-modules
\[
\Xi_{A}=\bigoplus_{n\in \Z}\bigoplus_{k\geq \max\{0,-n\}} \Xi_{A}^{n,k}.
\]
Here $\Xi_{A}^{n,k}=P_{n,k}\Xi_{A}$ for $n\in \Z$, $k\in \N$ with $k+n\geq 0$ and a projection 
$P_{n,k}=P^{*}_{n,k}=P_{n,k}^{2}\in \K_{A}(\Xi_{A})$.
The number operator $c:=\sum_{n,k}nP_{n,k}$ and the depth-kore operator 
$\kappa:=\sum_{n,k}kP_{n,k}$ define self-adjoint regular operators on 
$\phimod$ that commute on the common core given by the algebraic direct sum. 
The operator
\[
\D:=\psi(c,\kappa)=\sum_{n,k}\psi(n,k)P_{n,k},\quad \psi(n,k)
=\left\{\begin{array}{ll} n & k=0\\ -(k+|n|) & \mbox{otherwise}
\end{array}\right. ,
\]
makes $(\O_{E},\Xi_{A},\D)$ into an unbounded Kasparov module representing 
the class of the Toeplitz extension $\extcls\in KK^{1}(\O_{E},A)$.
\end{thm*}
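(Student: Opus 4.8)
The plan is to verify, one by one, the axioms of an odd unbounded Kasparov module for $(\O_E,\Xi_A,\D)$: the stated direct-sum decomposition with its projections $P_{n,k}$; the self-adjointness and regularity of $c$, $\kappa$ and $\D$; the boundedness of $[\D,a]$ on a dense subalgebra of $\O_E$; the $A$-compactness of $a(\D\pm i)^{-1}$; and finally the identification of the resulting class with $\extcls$. For the decomposition: the gauge circle action on $\O_E$ descends to $\Xi_A$ because $\Phi_\infty$ is gauge invariant, and averaging over the circle produces the orthogonal $\Z$-grading $\Xi_A=\bigoplus_{n\in\Z}\Xi_A^n$. The construction of \cite{RRS} presents each $\Xi_A^n$ as the closure of an increasing union of finitely generated projective $A$-submodules $W_{n,k}$, mirroring the filtration of the core $\Cc_E$ by the algebras of adjointable operators on the finite tensor powers of $E$. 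The first real task is to show, under Assumptions \ref{ass:one} and \ref{ass:two}, that each inclusion $W_{n,k}\hookrightarrow W_{n,k+1}$ is complemented with finitely generated projective complement: this is exactly where the asymptotics of the Jones--Watatani indices of the $E^{\otimes\ell}$ are used, and it yields the summands $\Xi_A^{n,k}$ (supported where $n+k\geq 0$) together with the projections $P_{n,k}=P_{n,k}^*=P_{n,k}^2\in\K_A(\Xi_A)$. In particular $\bigoplus_{n\geq 0}\Xi_A^{n,0}$ is the Fock module, while the summands with $k\geq 1$ carry the ``tail'' directions; orthogonality of the total sum is then inherited from gauge-equivariance and compatibility of the inner product with the filtration.

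Next, since $c$, $\kappa$ and $\D=\psi(c,\kappa)$ are all diagonal with respect to $\{P_{n,k}\}$ with real eigenvalues, each is symmetric on the algebraic direct sum $\Xi_A^{\mathrm{alg}}=\bigoplus^{\mathrm{alg}}_{n,k}\Xi_A^{n,k}$, and a diagonal operator with real eigenvalues on a direct sum of complemented Hilbert submodules is essentially self-adjoint and regular with the algebraic sum as a core, the relevant resolvents being the bounded adjointable series $\sum_{n,k}(\lambda_{n,k}\pm i)^{-1}P_{n,k}$, where $\lambda_{n,k}$ denotes the eigenvalue on $\Xi_A^{n,k}$. Thus $c$, $\kappa$ and $\D$ are self-adjoint and regular; $c$ and $\kappa$ commute on $\Xi_A^{\mathrm{alg}}$ since they are simultaneously diagonalised; and because $\psi(n,k)^2\geq 1$ for every $(n,k)\neq(0,0)$ one has $\D^2\geq 1-P_{0,0}$, so $1+\D^2$ has a bounded inverse.

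For the two analytic axioms it suffices to work on the dense subalgebra of $\O_E$ generated by $A$, the creation operators $S_e$ for $e\in E$, and the algebraic part of $\Cc_E$, because each such generator has bounded propagation in the $(n,k)$-lattice: it decomposes as a finite sum $a=\sum_j T_j$ of pure-shift pieces, with $T_j$ mapping each $\Xi_A^{n,k}$ into a single $\Xi_A^{n+\delta n_j,\,k+\delta k_j}$ for a shift $(\delta n_j,\delta k_j)$ independent of $(n,k)$, and $T_jP_{n,k}=0$ outside the sectors where that shift genuinely occurs. Pinning down which shifts occur on which sectors --- creation raises $n$ by one, and the moves governing the passage between the Fock part ($k=0$) and the tails ($k\geq1$) happen only at small $(n,k)$ --- is where the fine structure of shift-tail equivalence enters, playing the role that the groupoid function of \cite{GM} plays in the Cuntz--Krieger case. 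Granted this, $[\D,T_j]$ has orthogonal domains \emph{and} orthogonal ranges, so its operator norm is $\sup_{n,k}|\psi(n+\delta n_j,k+\delta k_j)-\psi(n,k)|\,\|T_jP_{n,k}\|$, and the formula for $\psi$ --- the term $|n|$ and the sign flip between $k=0$ and $k\geq1$ --- is designed precisely so that this difference stays bounded along every occurring shift, giving $[\D,a]$ bounded. By the same orthogonality, $T_j(1+\D^2)^{-1/2}=\sum_{n,k}(1+\psi(n,k)^2)^{-1/2}T_jP_{n,k}$ is approximated in norm by its truncation to $\{|\psi(n,k)|\leq R\}$, a finite sum of operators between finitely generated projective modules and hence in $\K_A(\Xi_A)$, with tail norm at most $(1+R^2)^{-1/2}\|a\|\to0$ since $|\psi(n,k)|\to\infty$ as $(n,k)\to\infty$; as $\K_A(\Xi_A)$ is closed and the subalgebra dense, $a(\D\pm i)^{-1}\in\K_A(\Xi_A)$ for all $a\in\O_E$. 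Finally, the bounded transform $\D(1+\D^2)^{-1/2}$ is, modulo $\K_A(\Xi_A)$, equal to $2P-1$ with $P$ the projection onto $\bigoplus_{n\geq1}\Xi_A^{n,0}$, so it is the Toeplitz-type phase attached to \eqref{pimdefext}; one then identifies the class with $\extcls$ via an operator homotopy to the bounded representative of \cite{RRS}, collapsing the degenerate $k\geq1$ part.

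I expect the two genuinely hard points to be, first, the structural one: producing the splitting $\Xi_A^n=\bigoplus_k\Xi_A^{n,k}$ into finitely generated projective summands with norm estimates uniform in $k$, so that the regularity statements and the tail estimates above survive --- this is precisely where Assumptions \ref{ass:one} and \ref{ass:two} on the index asymptotics are indispensable; and second, the analytic one: the exact bookkeeping of how the generators of $\O_E$ move vectors through the depth filtration, since the boundedness of $[\D,a]$, and thereby the very choice of the function $\psi$, rests on matching the slow variation of $\psi$ against exactly those moves.
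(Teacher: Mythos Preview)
Your overall strategy matches the paper's: the decomposition is obtained by building finite-rank projections $Q_{n,k}$ explicitly as sums of rank-one operators and setting $P_{n,k}=Q_{n,k}-Q_{n,k-1}$ (the concrete version of your filtration-complementing); self-adjointness, regularity and compact resolvent follow from the diagonal form; and the class is identified by observing that the non-negative spectral projection of $\D$ is the Fock projection. The commutator argument is also the same in spirit---the paper's Lemma~\ref{lem:mu-n-r} is exactly your ``pure-shift'' relation, $S_\mu P_{n,k}=P_{n+|\mu|,k}S_\mu$ for $n+k>0$ and $S_\mu P_{n,-n}=Q_{n+|\mu|,-n}S_\mu$ on the boundary---after which the $\psi$-differences are checked case by case.

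One genuine correction: you say the non-generic moves ``happen only at small $(n,k)$''. This is wrong and it matters. The boundary behaviour---where $S_\mu$ fails to be a single shift---occurs along the entire ray $n+k=0$, i.e.\ at $(n,-n)$ for all $n\le 0$, not just near the origin. There $S_\mu P_{n,-n}$ lands in $Q_{n+|\mu|,-n}=\sum_{j=\max\{0,-n-|\mu|\}}^{-n}P_{n+|\mu|,j}$, so your pure-shift decomposition of $S_\mu$ has $|\mu|+1$ pieces $T_0,T_{-1},\dots,T_{-|\mu|}$, the nonzero shifts being supported on this \emph{unbounded} ray. Boundedness of $[\D,S_\mu]$ therefore requires the estimate $\sup_{n\le 0}|\psi(n+|\mu|,-n+j)-\psi(n,-n)|<\infty$ for each such $j$, uniformly as $n\to-\infty$; this is the content of the telescoping computation \eqref{secondsumcomm} in the proof of Theorem~\ref{definunbdd}. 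Your framework handles it once the shift relations are in hand, but believing the boundary is finite would cause you to omit precisely the estimate that needs work. Minor: the non-negative spectral subspace of $\D$ is $\bigoplus_{n\ge 0}\Xi_A^{n,0}$ (including $n=0$), which is the full Fock module, not $n\ge 1$; the difference is the rank-one piece $P_{0,0}$, so your identification of the class is still correct.
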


The number operator $c$ and the depth-kore operator $\kappa$ are both canonically 
constructed from $E$. There is however some freedom when choosing $\psi$. This is discussed further in 
Remark \ref{psichange} and Remark \ref{lipprop}.
The cycle in Theorem \ref{mainthm} recovers the well-known number operator construction 
when $E$ is a self-Morita equivalence bimodule, or SMEB (see Proposition \ref{smebcase} in Section \ref{smebsubse}), 
as well as the construction for Cuntz-Krieger algebras in \cite{GM}, 
viewed as Cuntz-Pimsner algebras over the maximal abelian subalgebra 
coming from a subshift of finite type (Section \ref{localhomsubs}). 
Theorem \ref{mainthm} sheds new light on some of the results obtained in \cite{GM}.

An application of Theorem \ref{mainthm} is 
the following construction of a spectral triple for the
Cuntz-Pimsner algebra of a vector 
bundle. Such $C^{*}$-algebras were previously considered in \cite{Dad,Vass}. 
Let $V\to M$ be a complex vector bundle on a Riemannian manifold $M$, $\alpha:C(M)\to C(M)$ 
a $*$-automorphism induced from an isometric $C^1$-diffeomorphism. 
We define ${}_\alpha E:=\Gamma(M,V)$
with the ordinary right $C(M)$-action and the left $C(M)$-action 
defined from $\alpha$ and denote
the associated Cuntz-Pimsner algebra by $\O_{{}_\alpha E}$. 

Consider a Dirac-type operator 
$\Dsla$ on a Clifford bundle $S\to M$ and the Hilbert space
$\mathcal{H}:=\aXi_{C(M)}\otimes_{C(M)}L^2(M,S)$. The operator $\D$ appearing in 
Theorem \ref{mainthm} and the Dirac type operator $\Dsla$ can be assembled to form a 
self-adjoint operator $\D_{E}$ on $\mathcal{H}$. For more details regarding the construction, 
see Section \ref{CPalgofbundle}, in particular Lemma \ref{domainsandshit}. The following 
result appears as Theorem \ref{kasppordvbmand} below.

\begin{thm*}
\label{mainapp} The triple $(\O_{{}_{\alpha}E},\mathcal{H},\D_E)$ 
is a spectral triple for the Cuntz-Pimsner algebra $\O_{{}_{\alpha}E}$ representing the Kasparov 
product of the class of
$$
0\to \K_{C(M)}(\mathcal{F}_{{}_{\alpha}E})\to 
\T_{{}_{\alpha}E}\to 
\O_{{}_{\alpha}E}\to 0
$$
in $KK^1(\O_{{}_{\alpha}E},C(M))$ with $[\Dsla]\in KK^*(C(M),\C)$. 
\end{thm*}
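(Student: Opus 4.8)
The plan is to obtain the spectral triple by taking the unbounded Kasparov product of the unbounded $(\O_{{}_\alpha E}, \aXi_{C(M)})$-module $(\O_{{}_\alpha E},\aXi_{C(M)},\D)$ supplied by Theorem~\ref{mainthm} with the Dirac operator $\Dsla$, and then verifying that the resulting unbounded cycle is in fact a spectral triple, i.e.\ that the algebra acts on a genuine Hilbert space with bounded commutators and compact resolvent. First I would recall that Theorem~\ref{mainthm} applied to the bi-Hilbertian bimodule $\aE=\Gamma(M,V)$ over $C(M)$ gives the defining (Toeplitz) extension class $\extcls\in KK^1(\O_{{}_\alpha E},C(M))$ represented by $(\O_{{}_\alpha E},\aXi_{C(M)},\D)$; the hypotheses of Theorem~\ref{mainthm}, namely Assumptions~\ref{ass:one} and~\ref{ass:two}, hold here because the left action is twisted by an isometric $C^1$-diffeomorphism, which is exactly the equicontinuous/primitive-type situation listed among the verified examples. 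On the other side, the Dirac-type operator $\Dsla$ on the Clifford bundle $S\to M$ defines the fundamental $K$-homology class $[\Dsla]\in KK^*(C(M),\C)$ via the spectral triple $(C^\infty(M),L^2(M,S),\Dsla)$.

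The core of the argument is then the unbounded Kasparov product. Here I would invoke the constructive machinery of Kucerovsky / Mesland–Rennie (or Kaad–Lesch) for tensoring an unbounded Kasparov module over $A=C(M)$ with a spectral triple: one needs a suitable connection $\nabla$ on the interior tensor product $\aXi_{C(M)}\otimes_{C(M)}L^2(M,S)$, compatible with $\Dsla$, so that the assembled operator $\D_E = \D\otimes 1 + 1\otimes_\nabla \Dsla$ (the precise form is spelled out in Lemma~\ref{domainsandshit}) is self-adjoint, has the correct domain, and satisfies Kucerovsky's connection and positivity conditions relative to $\D$ and $\Dsla$. Because $\aXi_{C(M)}$ decomposes as the countable direct sum $\bigoplus_{n,k}\Xi^{n,k}_{C(M)}$ of finitely generated projective $C(M)$-modules (Theorem~\ref{mainthm}), the interior tensor product is a countable direct sum of the spaces $P_{n,k}(\,\aXi_{C(M)}\otimes L^2(M,S))$, each a finitely generated projective module tensored with $L^2(M,S)$, hence a closed subspace of a finite direct sum of copies of $L^2(M,S)$; this makes $\H$ a genuine separable Hilbert space and lets one build $\nabla$ blockwise using a Grassmann/Levi-Civita connection on each $P_{n,k}$. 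I would check that $\D\otimes 1$ contributes compact resolvent in the ``fibre'' direction (its spectrum is $\{\psi(n,k)\}$ with finite-rank eigenprojections, using $k+n\geq 0$ so each value $\psi(n,k)$ is attained finitely often — this is the local-compactness input already present in Theorem~\ref{mainthm}) while $1\otimes_\nabla\Dsla$ contributes compact resolvent along $M$ (since $\Dsla$ has compact resolvent on the closed $M$-manifold), so that $\D_E$ has compact resolvent on $\H$ after a standard estimate combining the two commuting-up-to-bounded pieces. Boundedness of $[\D_E, a]$ for $a$ in a dense subalgebra (smooth functions on $M$, suitably interpreted inside $\O_{{}_\alpha E}$, together with the generators coming from $E$) follows from the corresponding property of $\D$ in Theorem~\ref{mainthm} and of $\Dsla$, plus the compatibility of $\nabla$ with the $C(M)$-action.

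Finally, having verified that $(\O_{{}_\alpha E},\H,\D_E)$ is an unbounded Kasparov $(\O_{{}_\alpha E},\C)$-cycle, I would conclude by Kucerovsky's theorem that its class equals the internal Kasparov product $[(\O_{{}_\alpha E},\aXi_{C(M)},\D)]\otimes_{C(M)}[\Dsla] = \extcls\otimes_{C(M)}[\Dsla]$ in $KK^*(\O_{{}_\alpha E},\C)$, which is exactly the assertion of the theorem. The main obstacle I expect is the verification of self-adjointness and the connection condition for $\D_E$: one must control the interaction between the depth-kore/number operator $\D$, which is built from the inductive-limit structure of the core and only ``sees'' $C(M)$ through the expectation $\Phi_\infty$, and the Dirac operator $\Dsla$, which acts geometrically on $M$; the twisting automorphism $\alpha$ enters the left $C(M)$-action on each $\Xi^{n,k}_{C(M)}$, so the connection $\nabla$ must intertwine $\alpha$-twisted and untwisted actions correctly at each level, and showing that the commutator $[1\otimes_\nabla\Dsla, \D\otimes 1]$ (and the curvature terms) remain bounded — using that $\alpha$ comes from an \emph{isometric} $C^1$-diffeomorphism so that $\Dsla$ is only perturbed by bounded zeroth-order terms under the twist — is the technical heart of the matter, carried out in detail via Lemma~\ref{domainsandshit} in Section~\ref{CPalgofbundle}.
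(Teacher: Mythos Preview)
Your overall strategy is correct and matches the paper's: form $\D_E=D+T$ with $D=\D\otimes\gamma$ and $T=1\otimes_\nabla\Dsla$ built from a frame connection on each $\aXi^{n,k}$, verify it defines an unbounded cycle, and then invoke Kucerovsky. The self-adjointness and compact-resolvent arguments you sketch are also right and are exactly what Lemma~\ref{domainsandshit} does.

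However, you misidentify where the technical difficulty lies. You say the ``technical heart'' is controlling $[1\otimes_\nabla\Dsla,\D\otimes 1]$ and curvature terms, and that self-adjointness is the main obstacle. In fact both operators $D$ and $T$ are diagonal with respect to the decomposition $\bigoplus_{n,k}\aXi^{n,k}\otimes L^2(M,S)$, and $D$ carries the grading $\gamma$; so $DT+TD=0$ on the common core, with no curvature terms at all. Self-adjointness then follows from standard results on anti-commuting diagonal operators. This part is short.

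The genuine work, which your proposal glosses over, is the bounded-commutator condition $[\D_E,S_{e_\iota}\otimes 1]$ for the \emph{algebra} generators. Your claim that this ``follows from the corresponding property of $\D$ in Theorem~\ref{mainthm} and of $\Dsla$, plus the compatibility of $\nabla$'' is not correct: boundedness of $[\D,S_\mu]$ on $\aXi_{C(M)}$ says nothing about $[T,S_{e_\iota}\otimes 1]$. The operator $S_{e_\iota}$ does not act as a simple shift on the frame $(x_{\rho,\sigma})$; computing $[T,S_{e_\iota}\otimes 1]$ requires tracking how $S_{e_\iota}$ and $S_{e_\iota}^*$ act on the explicit frame of Lemma~\ref{globalframe} (Lemmas~\ref{Srelations} and~\ref{crackerbewdycorkerripperbonzagrouse}) and then showing that after cancellations the surviving terms are all of the form $[\Dsla,\alpha^{\ell}(e_\lambda|e_\iota)]$ for varying $\ell\in\Z$ (Lemma~\ref{forgettrivialbundles}). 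This is precisely where the isometry/equicontinuity hypothesis $\sup_{\ell}\|[\Dsla,\alpha^\ell(a)]\|<\infty$ enters: without it these terms are unbounded in $\ell$ and the commutator is not bounded (Proposition~\ref{prop:better-diffeo}). So the isometric assumption is needed for $[T,\,\cdot\,]$, not for $[D,T]$.
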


In fact we prove the theorem for `almost isometries', namely $C^1$-diffeomorphisms
inducing automorphisms $\alpha$ such that $\sup_{\ell\in \Z}\Vert[\Dsla,\alpha^\ell(f)]\Vert<\infty$ for
each $f\in C^1(M)$: see Proposition \ref{prop:better-diffeo}. Spectral triples on the crossed 
product $C(M)\rtimes \Z$ of an equicontinuous action, as studied in \cite{BMR}, arise as a special case.

The contents of the paper are as follows. 
In Section \ref{sec:been-done} we recall the setup of \cite{RRS}. 
In particular, we recall the construction of the operator-valued weight 
$\Phi_\infty:\O_E\to A$ used to define the module $\phimod$ of Theorem \ref{mainthm}.  
After recalling the motivating example of Cuntz-Krieger algebras
(from \cite{GM}) in Subsection \ref{briefreviewck} we proceed in 
Section \ref{sec:unbdd} to construct the orthogonal decomposition of 
$\Xi_{A}$ (Section \ref{subsec:chop-chop}), 
the depth-kore operator $\kappa$ and the unbounded cycle (Section  \ref{subsec:kas-mod}) 
appearing in Theorem \ref{mainthm}.

In Section \ref{sec:eggs}, we provide examples in the form of the above mentioned SMEBs and 
Cuntz-Krieger algebras. For the latter we use the construction of the Cuntz-Pimsner algebra 
of a local homeomorphism from \cite{deaacaneudoaod}. 
This clarifies some $K$-theoretic statements proved in \cite{GM}.
In Section \ref{comparingcycletojesus} we compare the dynamical approach 
for the Cuntz algebra $O_N$ with the model using the coefficient algebra $\C$ 
(the graph $C^*$-algebra approach). Finally, in Section \ref{CPalgofbundle} 
we prove Theorem \ref{mainapp}.

{\bf Acknowlegements}
The authors thank the Leibniz Universit\"{a}t Hannover, Germany, 
the University of Wollongong, Australia and 
the Hausdorff Insititute for Mathematics, Bonn, Germany for their hospitality and support. 
The first author was supported by the Knut and Alice Wallenberg foundation and 
the Swedish Research Council Grant 2015-00137 and Marie Sklodowska Curie Actions, 
Cofund, Project INCA 600398.
The second author was supported by EPSRC grant EP/J006580/2. 
The third author was supported by the ARC.
This work has benefited from discussions with Francesca Arici, Robin Deeley, 
Ulrich Kr\"ahmer, Aidan Sims and Michael Whittaker. We thank the anonymous referee for a careful reading of the paper and various helpful suggestions.

\section{The Kasparov module representing the extension class}
\label{sec:been-done}

In this section we will recall the basic setup of \cite{RRS}. 
We have a unital separable $C^*$-algebra $A$, and a
bi-Hilbertian bimodule $E$ over $A$ which is finitely generated and projective for 
both the right and left module structures. This means that $E$ is a bimodule over
$A$, carries both left and right $A$-valued inner products ${}_A(\cdot|\cdot)$, $(\cdot|\cdot)_A$ for
each of which $E$ is full, and for which the respective actions are injective and adjointable. 
See \cite{KajPinWat} for details.
The two inner products automatically yield equivalent norms (see, for instance \cite[Lemma 2.2]{RRS}).
We write ${}_AE$ for $E$ when we wish to emphasize its left module structure and $E_A$ for $E$ 
when emphasizing the right module structure.

\subsection{Cuntz-Pimsner algebras}

Regarding $E$ as a right module with a left $A$-action (a correspondence) we can construct
the Cuntz-Pimsner algebra $\O_E$. This we do concretely in the Fock representation. The
algebraic Fock space is the algebraic direct sum
$$
\algFock=\bigoplus_{\ell\geq 0}^{\textnormal{alg}}E^{\otimes_A \ell}
=\bigoplus_{\ell\geq 0}^{\textnormal{alg}}E^{\otimes \ell}
=A\oplus E\oplus E^{\otimes 2}\oplus\cdots
$$
where the copy of $A$ is the trivial $A$-correspondence. The Fock space $\Fock$ is the completion 
of $\algFock$ as an $A$-Hilbert module. For $\nu\in \algFock$, we define
the creation operator $T_\nu$ by the formula
$$
T_\nu(e_1\ox\cdots\ox e_\ell)=\nu\ox e_1\ox\cdots\ox e_\ell.
$$
The expression $T_\nu$ extends to an adjointable operator on $\Fock$. 
The $C^*$-algebra generated by the set of creation operators $\{T_\nu: \;\nu\in \algFock\}$ 
is the Toeplitz-Pimsner algebra $\T_E$. It is straightforward to show that $\T_E$ contains 
the compact endomorphisms $\K_A(\Fock)$ as an ideal. The defining 
extension for the Cuntz-Pimsner algebra $\O_E$ is the following short exact sequence:
\begin{equation}
0\to \K_A(\Fock)\to \T_E \to \O_E \to 0.
\label{eq:ext}
\end{equation}
For $\nu\in \algFock$, we let $S_\nu$ denote the class of 
$T_\nu$ in $\O_E$. If $\nu\in E^{\otimes \ell}$ we write $|\nu|:=\ell$.
We note that Pimsner's   general construction \cite{Pimsner} uses an ideal that 
in general is larger than $\K_A(\Fock)$. 
In our case, $A$ acts from the left on $E_{A}$ by compact endomorphisms, 
ensuring that Pimsner's ideal coincides with $\K_A(\Fock)$.

\begin{rmk}
The Fock module $\Fock$ is not to be confused with the Fock space defined in the context 
of Cuntz-Krieger algebras and used by Kaminker-Putnam \cite{kaminkerputnam}. 
The constructions in \cite{kaminkerputnam} are related to $KK$-theoretic duality, whereas 
our aim is to represent a specific extension class by an unbounded cycle.
\end{rmk}

\begin{rmk}
\label{circleaction}
The formula $z\cdot S_\nu S_\mu^*:=z^{|\nu|-|\mu|}S_\nu S_\mu^*$ extends to a $U(1)$-action on $\O_E$, \cite{Pimsner}. 
We denote the fixed point algebra for this action by $\core$. The formula 
$\rho(x):=\int_{U(1)} z\cdot x\, \mathrm{d}z$ (where $\mathrm{d}z$ 
denotes the normalized Haar measure on $U(1)$) 
defines a conditional expectation $\rho:\O_E\to \core$. The generator of the circle action 
defines a closed operator $N$ on the completion $X_{\core}$ of $\O_E$ as a $\Cc_{E}$-Hilbert module
in the inner product defined from $\rho$. Under the spectral subspace assumption 
(see \cite[Definition 2.2]{careyetal11}), $N$ is a self-adjoint, regular operator with compact resolvent 
whose commutators with $\{S_\nu: \;\nu\in \algFock\}$ are bounded. In particular, 
$(\O_E,X_{\core},N)$ 
defines an unbounded $(\O_E,\core)$-Kasparov module.

There is an equality $\Cc_{E}=A$ if and only if $E$ can be given a left inner product making it
a self-Morita equivalence bimodule (SMEB), \cite[Proposition 5.18]{K2}. 
SMEB's are considered further in Example \ref{smebexample}.
This case has been studied in \cite{RRS} as well as in \cite{gabrielgrensing}. 
In general, $\Cc_{E}$ is substantially larger than $A$ and the generator of the circle action is 
insufficient for constructing an unbounded $(\O_E,A)$-Kasparov module. 
\end{rmk}

\begin{ex}[Local homeomorphisms]
\label{localexamnple}
Let $g:V\to V$ be a local homeomorphism of a compact space $V$. Associated with $g$, there is a 
transfer operator 
$$
\mathfrak{L}:C(V)\to C(V), \quad \mathfrak{L}(f)(x):=\sum_{g(y)=x}f(y).
$$
We can define a bimodule 
structure $E={}_{\mathrm{id}}C(V)_{g^*}$ on $C(V)$ by 
$$(afb)(x)=a(x)f(x)b(g(x)), \quad a,b\in C(V), \;f\in E.$$
The two inner products on $E$ are given by
$$
(f_1|f_2)_{C(V)}:=\mathfrak{L}(\overline{f_1}f_2)\quad\mbox{and}
\quad {}_{C(V)}(f_1|f_2)=f_1\overline{f_2}.
$$
For more details see \cite{deaacaneudoaod}. As a source of examples, we will mainly be concerned with a special case: the shift mapping on a subshift of finite type. 
The reason for this is that the associated Cuntz-Pimsner algebra is a Cuntz-Krieger algebra, and as such 
it also admits a model as a Cuntz-Pimsner algebra over a finite-dimensional $C^*$-algebra. 
This will allow us to compare and contrast our techniques relative to the choice of Cuntz-Pimsner 
model rather than the isomorphism class of the Cuntz-Pimsner algebra.
\end{ex}

\begin{ex}[Graph $C^*$-algebras]
Let $G=(G^0,G^1,r,s)$ be a finite directed graph. We consider the finite-dimensional algebra $A=C(G^0)$ and 
the $A$-bimodule $E=C(G^1)$ with the bimodule structure
$$(afb)(g)=a(r(g))f(g)b(s(g)), \quad a,b\in A, \; f\in E.$$
For $e,f\in E$, the inner products are defined by 
$$(e|f)_A(v):=\sum_{s(g)=v} \overline{e(g)}f(g)\quad \mbox{and}\quad {}_A(e|f)(v):=\sum_{r(g)=v} e(g)\overline{f(g)}.$$
The associated Cuntz-Pimsner algebra coincides with the graph $C^*$-algebra $C^*(G)$ , see \cite[Example 2, p 193]{Pimsner}.
\end{ex}

\begin{ex}[Cuntz-Krieger algebras]
Assume that $\pmb{A}:=(a_{ij})_{i,j=1}^N$ is an $N\times N$-matrix of $0$'s and $1$'s. We let $O_{\pmb{A}}$ 
denote the associated Cuntz-Krieger algebra \cite{CK}. If $\pmb{A}$ is the edge adjacency matrix of a finite directed graph 
$G$, then $O_{\pmb{A}}\cong C^*(G)$. On the other hand, letting $(\Omega_{\pmb{A}},\sigma)$ denote the 
associated one-sided subshift of finite type $O_{\pmb{A}}$ coincides with the Cuntz-Pimsner algebra associated 
with the local homeomorphism $\sigma$ as in Example \ref{localexamnple}. Yet another description is in terms 
of groupoids; $O_{\pmb{A}}$ is isomorphic to the groupoid $C^*$-algebra of the groupoid 
\begin{equation}
\label{thegroupoidra}
\mathcal{R}_{\pmb{A}}:=\{(x,n,y)\in \Omega_{\pmb{A}}\times \Z\times \Omega_{\pmb{A}}: 
\;\exists k\geq \max\{0,-n\} \mbox{  with  } \sigma^{n+k}(x)=\sigma^k(y)\}\rightrightarrows\Omega_{\pmb{A}}.
\end{equation}
That is,  $\mathcal{R}_{\pmb{A}}$ consists of shift-tail equivalent pairs of points with a prescribed lag. 
The set $\mathcal{R}_{\pmb{A}}$ becomes a groupoid for the operation $(x,n,y)(y,m,z)=(x,n+m,z)$ and 
can be equipped with an \'etale topology (see \cite{Ren1,Ren2}). 
\end{ex}

A Cuntz-Krieger algebra $O_{\pmb{A}}$ also has a graph $C^*$-algebra model. 
However, we use the convention that whenever referring to a Cuntz-Krieger algebra as a Cuntz-Pimsner algebra 
we mean its model over $C(\Omega_{\pmb{A}})$. We distinguish it from its Cuntz-Pimsner model as a graph $C^*$-algebra.

\subsection{The conditional expectation}

A Kasparov module representing the class of the extension \eqref{eq:ext}
was constructed in \cite{RRS}. Here we recall the salient points. 
For $x$ and $y$ in a right Hilbert module, we denote the associated rank-one 
operator by $\Theta_{x,y}:=x\langle y,\cdot\rangle$. 
We choose a frame $(e_\rho)_{\rho=1}^N$ for $E_A$. By a frame we mean
$$
\sum_{\rho=1}^N\Theta_{e_\rho,e_\rho}={\rm Id}_E.
$$
The frame $(e_\rho)_{\rho=1}^N$ induces a frame for $E_A^{\ox \ell}$, namely
$(e_\rho)_{|\rho|=\ell}$ where $\rho$ is a multi-index and $e_\rho=e_{\rho_1}\ox\cdots\ox e_{\rho_\ell}$.

We use ideas from \cite{KajPinWat} to define an $A$-bilinear functional $\Phi_\infty:\O_E\to A$. 
The details of this construction can be found in \cite[Section 3.2]{RRS}.
This functional will furnish us with an $A$-valued inner product on $\O_E$. We define 
$$
\Phi_\ell:\End_A^*(E^{\ox \ell})\to A,
\qquad \Phi_\ell(T)=\sum_{|\rho|=\ell}{}_A(Te_\rho|e_\rho).
$$
Here we use the notation $\End_A^*(E^{\ox \ell})$ for the $C^*$-algebra of 
$A$-linear adjointable operators on $E^{\ox \ell}$. It follows from \cite[Lemma 2.16]{KajPinWat}
that $\Phi_\ell$ does not depend on the choice of frame. We write 
$\mathrm{e}^{\beta_\ell}:=\Phi_\ell({\rm Id}_{E^{\ox \ell}})$. Since $\Phi_\ell$ is independent of the
choice of frame, so is $\mathrm{e}^{\beta_\ell}$. Note that $\mathrm{e}^{\beta_\ell}$ is a 
positive, central, invertible element of $A$. Therefore $\beta_\ell$ is a well defined 
self-adjoint central element in $A$. We extend the functional $\Phi_\ell$
to a mapping $\End^*_A(\Fock)\to A$ by compressing along the orthogonally complemented submodule 
$E^{\ox \ell}\subseteq \Fock$. 

Naively, we would like to define
\begin{equation}
\label{phiinftydef}
\Phi_\infty(T)\,``\!:=\!"\lim_{\ell\to \infty}\Phi_\ell(T)\mathrm{e}^{-\beta_\ell},
\quad\mbox{for suitable $T\in \End_A^*(\Fock)$.}
\end{equation}
Indeed, $\Phi_\ell(T)\mathrm{e}^{-\beta_\ell}$ is easily shown to be bounded and some `generalised limit'
might exist. We can not employ the theory of generalised limits as $\Phi_\infty$ is not 
scalar valued. Following \cite{RRS}, we work under the following 
assumption guaranteeing that the limit exists for $T$ in a dense subspace of $\T_E$.

\begin{ass}
\label{ass:one}
We assume that for every $\ell\in \N$ and $\nu\in E^{\otimes \ell}$, there is a $\delta>0$ and a $\tilde{\nu}\in E^{\otimes \ell}$ 
such that 
$$
\Vert \mathrm{e}^{-\beta_n}\nu \mathrm{e}^{\beta_{n-\ell}}-\tilde\nu\Vert = O(n^{-\delta}),\quad\mbox{as $n\to \infty$}.
$$
\end{ass}

\begin{ex}[Assumption \ref{ass:one} and graph $C^*$-algebras]
\label{discongrap}
Assumption \ref{ass:one} is non-trivial for graph $C^*$-algebras. It was verified in \cite[Example 3.8]{RRS} that 
a graph $C^*$-algebra with primitive vertex adjacency matrix satisfies Assumption \ref{ass:one}. 
The Jones-Watatani indices of a graph $C^*$-algebra were computed in \cite[Example 3.8]{RRS} 
by means of its vertex adjacency matrix $\pmb{A}_{\pmb{v}}$ as 
$$\mathrm{e}^{\beta_\ell}=\sum_{v,w\in G^0} \pmb{A}_{\pmb{v}}^\ell(v,w)\delta_v\in A=C(G^0).$$
If $G$ is the graph with $N$ edges on one vertex, $C^*(G)=O_N$ and $\mathrm{e}^{\beta_\ell}=N^\ell$. 
It is an open problem to determine if all graph $C^*$-algebras satisfy Assumption \ref{ass:one}.
\end{ex}

\begin{ex}[Assumption \ref{ass:one} for Cuntz-Krieger algebras]
\label{betakcomp}
Let us verify Assumption \ref{ass:one} for Cuntz-Krieger algebras in the Cuntz-Pimsner model 
over $C(\Omega_{\pmb{A}})$ for an $N\times N$-matrix $\pmb{A}$. 
We will choose a frame for $E={}_{\mathrm{id}}C(\Omega_{\pmb{A}}))_{\sigma^*}$ 
as follows. A left frame is given by the constant function $1\in E$. 
To construct a right frame, choose a covering $(U_j)_{j=1}^M$ such that $\sigma|:U_j\to \sigma(U_j)$ is a 
homeomorphism. We also pick a subordinate partition of unity $(\chi_j^2)_{j=1}^M$ 
(i.e. $\supp(\chi_j)\subseteq U_j$ and $\sum_j \chi_j^2=1$). For instance, with $M=N$ the cylinder sets 
$$U_j:=\{x=x_0x_1x_2\cdots \in \Omega_{\pmb{A}}: \;x_0=j\},$$
will form a clopen cover and $\chi_j^2:=\chi_{U_j}$ is a subordinate partition of unity.
We claim that $e_j:=\chi_j$ defines a right frame. Indeed, for any $f\in E$ the following identity holds
\begin{align*}
\left[\sum_j \chi_j (\chi_j|f)_{C(\Omega_{\pmb{A}})}\right](x)&=\sum_j\chi_j(x)\mathfrak{L}(\chi_j f)(\sigma(x))
=\sum_j\sum_{\sigma(y)=\sigma(x)}\chi_j(x)\chi_j(y)f(y)\\
&=\sum_j \chi_j^2(x)f(x)=f(x),
\end{align*}
where in the second last step we used the fact that $\sigma$ is injective on $U_j$. 
For a multi index $\rho$ of length $r$, we use the notation
$$\chi_\rho(x):=\prod_{j=1}^r \chi_{\rho_j}(\sigma^{j-1}(x)).$$

A simple computation gives 
\begin{equation*}
\textrm{e}^{\beta_\ell}=\sum_{|\rho|=\ell} \chi_\rho^2=\prod_{j=1}^\ell\left(\sum_{k=1}^M \chi_k^2(\sigma^{j-1}(x))\right)=1.
\end{equation*}
Therefore $\beta_\ell=0$ for Cuntz-Krieger algebras and Assumption \ref{ass:one} is satisfied. We remark at this point 
that the Jones-Watatani index is associated to the module and not the Cuntz-Pimsner algebra constructed from the module.
For a Cuntz-Krieger algebra we see enormous differences between the model over $C(\Omega_{\pmb{A}})$ 
and the model as a graph $C^*$-algebra.
\end{ex}

{\bf We assume that Assumption \ref{ass:one} holds for the remainder of the paper.}

In \cite{RRS}, the reader can find further examples of Cuntz-Pimsner algebras for 
which Assumption \ref{ass:one} holds. There are no known examples for which 
Assumption \ref{ass:one} does not hold.
When Assumption \ref{ass:one} holds, \cite[Proposition 3.5]{RRS} guarantees that the expression 
in Equation \eqref{phiinftydef} is well-defined on the $*$-algebra generated by the set of 
creation operators $\{T_\nu: \;\nu\in \algFock\}$. Indeed, we can under Assumption \ref{ass:one} 
compute $\Phi_\infty$ on the $*$-algebra generated by $\{T_\nu: \;\nu\in \algFock\}$.

\begin{lemma}
\label{computeinnerprod} 
For homogeneous elements $\mu,\,\nu\in \algFock$ we have
\begin{equation}
\label{eq:mu-nu-beta}
\Phi_\infty(T_\mu T_\nu^*)
= \lim_{\ell\to\infty}{}_A(\mu|\mathrm{e}^{-\beta_\ell}\nu \mathrm{e}^{\beta_{\ell-|\nu|}}).
\end{equation}
In particular, if $T$ is homogeneous of degree $n$, $|n|>0$, then $\Phi_{\infty}(T)=0$.
\end{lemma}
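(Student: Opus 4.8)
The plan is to evaluate $\Phi_\ell(T_\mu T_\nu^*)$ in closed form for all large $\ell$, then divide by $\mathrm{e}^{\beta_\ell}$ and let $\ell\to\infty$. I would first dispose of the degree-mismatch case. The operator $T_\mu T_\nu^*$ sends $E^{\ox\ell}\subseteq\Fock$ into $E^{\ox(\ell+|\mu|-|\nu|)}$, so if $|\mu|\neq|\nu|$ the compression $P_\ell\,T_\mu T_\nu^*\,P_\ell$ vanishes for every $\ell$; hence $\Phi_\ell(T_\mu T_\nu^*)=0$ for all $\ell$ and $\Phi_\infty(T_\mu T_\nu^*)=0$. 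The right-hand side of \eqref{eq:mu-nu-beta} then also vanishes, since ${}_A(\mu|\zeta)=0$ when $\mu$ and $\zeta$ lie in distinct homogeneous summands of $\algFock$. The last sentence of the lemma follows at once: a homogeneous element of degree $n$ with $|n|>0$ in the domain of $\Phi_\infty$ is a finite sum of monomials $T_\mu T_\nu^*$ with $|\mu|-|\nu|=n\neq0$, and $\Phi_\infty$ is $A$-bilinear, so it annihilates such an element.

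For the main identity I may therefore assume $|\mu|=|\nu|=q$ and fix $\ell\geq q$. Here I would use frame-independence of $\Phi_\ell$ (\cite[Lemma 2.16]{KajPinWat}) to evaluate it on the product frame $(e_{\rho'}\ox f_{\rho''})_{|\rho'|=q,\,|\rho''|=\ell-q}$ of $E^{\ox\ell}$, assembled from a frame $(e_{\rho'})$ for $E^{\ox q}_A$ and a frame $(f_{\rho''})$ for $E^{\ox(\ell-q)}_A$. A direct computation then uses, in turn: $T_\nu^*(\xi_1\ox\xi_2)=(\nu|\xi_1)_A\cdot\xi_2$ for $\xi_1\in E^{\ox q}$; the balancing relation $\mu\ox(a\cdot g)=(\mu\cdot a)\ox g$ to carry the resulting $A$-coefficient past the tensor symbol; the identity $\sum_{\rho''}{}_A(f_{\rho''}|f_{\rho''})=\Phi_{\ell-q}(\Id_{E^{\ox(\ell-q)}})=\mathrm{e}^{\beta_{\ell-q}}$ to collapse the $\rho''$-sum; centrality of $\mathrm{e}^{\beta_{\ell-q}}$ in $A$; adjointability of the right $A$-action with respect to ${}_A(\cdot|\cdot)$; and the reconstruction identity $\sum_{\rho'}e_{\rho'}(e_{\rho'}|\nu)_A=\nu$. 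The outcome is the closed form
\[
\Phi_\ell(T_\mu T_\nu^*)={}_A\big(\mu\cdot\mathrm{e}^{\beta_{\ell-q}}\,\big|\,\nu\big)\qquad(\ell\geq q).
\]

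It then remains to multiply by $\mathrm{e}^{-\beta_\ell}$ and rearrange. Since $\mathrm{e}^{\beta_{\ell-q}}$ is self-adjoint and $\mathrm{e}^{-\beta_\ell}$ is self-adjoint and central, the adjointability identity ${}_A(\xi\cdot a\,|\,\eta)={}_A(\xi\,|\,\eta\cdot a^*)$ for the right action, together with the sesquilinearity identity ${}_A(\xi\,|\,\eta)\,a={}_A(\xi\,|\,a^*\cdot\eta)$ of the left inner product, transform this into
\[
\Phi_\ell(T_\mu T_\nu^*)\,\mathrm{e}^{-\beta_\ell}={}_A\big(\mu\,\big|\,\mathrm{e}^{-\beta_\ell}\,\nu\,\mathrm{e}^{\beta_{\ell-q}}\big),
\]
which is exactly the $\ell$-th term of the limit in \eqref{eq:mu-nu-beta}. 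Finally I would apply Assumption \ref{ass:one} with its index taken to be $|\nu|=q$: it asserts that $\mathrm{e}^{-\beta_\ell}\,\nu\,\mathrm{e}^{\beta_{\ell-q}}$ converges in norm to some $\tilde\nu\in E^{\ox q}$, so by norm-continuity of ${}_A(\mu|\cdot)$ the limit exists and $\Phi_\infty(T_\mu T_\nu^*)={}_A(\mu|\tilde\nu)$, establishing \eqref{eq:mu-nu-beta} (and re-deriving the well-definedness from \cite[Proposition 3.5]{RRS}). The one step that needs genuine care is the middle paragraph: keeping the roles of the left and right $A$-valued inner products straight, and correctly moving the central Jones--Watatani elements $\mathrm{e}^{\beta_j}$ across the balanced tensor product $E^{\ox q}\ox_A E^{\ox(\ell-q)}$; once this bookkeeping is organised through the product frame, the rest is routine.
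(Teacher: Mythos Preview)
Your argument is correct and follows essentially the same route as the paper: the paper simply cites \cite[Lemma 3.2]{RRS} for the identity $\Phi_\ell(T_\mu T_\nu^*)={}_A(\mu\,|\,\nu\,\mathrm{e}^{\beta_{\ell-|\nu|}})$ (which is equivalent to your ${}_A(\mu\cdot\mathrm{e}^{\beta_{\ell-q}}\,|\,\nu)$ by adjointability of the right action), whereas you re-derive it via the product-frame computation. After that, both proofs multiply by $\mathrm{e}^{-\beta_\ell}$, use centrality and self-adjointness of the Jones--Watatani elements to rewrite the expression as ${}_A(\mu\,|\,\mathrm{e}^{-\beta_\ell}\nu\,\mathrm{e}^{\beta_{\ell-|\nu|}})$, and handle the nonzero-degree case by the vanishing of the left inner product between distinct homogeneous summands.
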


\begin{proof} 
It is proved in \cite[Lemma 3.2]{RRS} that for homogeneous $\mu,\,\nu\in \algFock$ we have 
$$
\Phi_\ell(T_\mu T_\nu^*)={}_A(\mu|\nu \mathrm{e}^{\beta_{\ell-|\nu|}})
$$
whenever $\ell\geq |\mu|=|\nu|$. Therefore, assuming Assumption \ref{ass:one} we have
\begin{align}
\Phi_\infty(T_\mu T_\nu^*)
&=\lim_{\ell\to \infty}\Phi_k(T_\mu T_\nu^*)\mathrm{e}^{-\beta_\ell}
=\lim_{\ell\to \infty}{}_A(\mu|\nu \mathrm{e}^{\beta_{\ell-|\nu|}})\mathrm{e}^{-\beta_\ell}=\lim_{\ell\to\infty}{}_A(\mu|\mathrm{e}^{-\beta_\ell}\nu \mathrm{e}^{\beta_{\ell-|\nu|}}).
\nonumber
\end{align}
Now if $T$ is of degree $n$, $|n|>0$, then $T$ is a 
linear combination of elements of the form $T_{\mu}T^{*}_{\nu}$ 
with $|\mu|\neq |\nu|$ and therefore 
${}_A(\mu| \mathrm{e}^{-\beta_{\ell}}\nu \mathrm{e}^{\beta_{\ell-|\nu|}})=0$ 
for all $\ell$, giving the desired statement.
\end{proof}

By a positivity argument, the mapping $\Phi_\infty$ is continuous in the $C^*$-norm on 
the $*$-algebra generated by $\{T_\nu: \;\nu\in \algFock\}$. 
We extend by continuity to obtain a unital positive $A$-bilinear functional $\Phi_\infty: \T_E\to A$.
The functional $\Phi_\infty$ annihilates the compact endomorphisms, and descends to a well-defined
functional on the Cuntz-Pimsner algebra $\O_E$. By an abuse of notation, we 
denote also this functional by $\Phi_\infty:\O_E\to A$. The reader is referred to \cite{RRS}, and in particular 
\cite[Proposition 3.5]{RRS}, for further details on $\Phi_\infty$. 
Since $\Phi_\ell$ and $\mathrm{e}^{\beta_\ell}$ 
do not depend on the choice of frame, neither does $\Phi_\infty$. 

In examples, the conditional expectation is computable. For instance, it was proven in \cite[Example 3.6]{RRS} that 
for the graph $G$ with $N$ edges on one vertex, so $A=\C$ and $E=\C^N$, with $C^*(G)=O_E$ being the Cuntz algebra 
$O_N$, then $\Phi_\infty$ coincides with the unique KMS-state for the gauge action on $O_N$, so
\begin{equation}\label{KMSCuntz}
\Phi_\infty(S_\mu S_\nu^*)=\delta_{\mu,\nu}N^{-|\mu|}.
\end{equation}
For a Cuntz-Krieger algebra we can also compute $\Phi_\infty$. 

\begin{conv}
\label{conventionforsimple}
Given a simple tensor $\nu\in \algFock$, with $\nu=\nu_1\ox\nu_2\ox\cdots\ox\nu_\ell$,
we will write $\nu=\underline{\nu}\overline{\nu}$ with $\underline{\nu}=\nu_1\ox\cdots\ox\nu_m$ and 
$\overline{\nu}=\nu_{m+1}\ox\cdots\ox \nu_\ell$ where
$m\leq \ell$ will either be clear from context or specified. 
\end{conv}

\begin{lemma}
\label{psiinftyandunit}
Let $\pmb{A}$ denote an $N\times N$-matrix of $0$'s and $1$'s, $\mathcal{R}_{\pmb{A}}$ the associated 
groupoid as in Equation \eqref{thegroupoidra} and $\Phi_\infty:C^*(\mathcal{R}_{\pmb{A}})\to C(\Omega_{\pmb{A}})$ 
the conditional expectation associated with the Cuntz-Pimsner model. For $f\in C_c(\mathcal{R}_{\pmb{A}})$ it holds that 
$$
\Phi_\infty(f)(x)=f(x,0,x).
$$
\end{lemma}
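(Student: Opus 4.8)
The plan is to reduce the statement to the computation already available through Lemma \ref{computeinnerprod}, translating the groupoid picture of $C^*(\mathcal{R}_{\pmb{A}})$ into the Cuntz-Pimsner picture over $C(\Omega_{\pmb{A}})$. First I would recall the standard identification $O_{\pmb{A}}\cong C^*(\mathcal{R}_{\pmb{A}})$ under which the generators correspond: for a finite admissible word $\rho=\rho_1\cdots\rho_\ell$ the element $S_{\chi_\rho}$ (with $\chi_\rho$ the frame element from Example \ref{betakcomp}) maps to a partial isometry supported on $\{(x,|\rho|,y):x\in Z_\rho,\ \sigma^\ell(x)=y\}$, and hence a product $S_{\chi_\mu}S_{\chi_\nu}^*$ maps to a function supported on pairs $(x,|\mu|-|\nu|,y)$ with $x\in Z_\mu$, $y\in Z_\nu$ and $\sigma^{|\mu|}(x)=\sigma^{|\nu|}(y)$. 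Since such products span a dense $*$-subalgebra, and since both sides of the claimed identity ($f\mapsto\Phi_\infty(f)$ and $f\mapsto (x\mapsto f(x,0,x))$) are continuous in the appropriate sense on $C_c(\mathcal{R}_{\pmb{A}})$ — the right-hand map being the canonical conditional expectation onto $C(\Omega_{\pmb{A}})=C(\mathcal{R}_{\pmb{A}}^{(0)})$ — it suffices to check agreement on these spanning elements.

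Next I would compute both sides on $f$ corresponding to $S_{\chi_\mu}S_{\chi_\nu}^*$. For the right-hand side: restricting to the unit space means keeping only the part with degree $|\mu|-|\nu|=0$, so the function $f(x,0,x)$ vanishes unless $|\mu|=|\nu|$, in which case it is supported on $Z_\mu\cap Z_\nu$ and equals $\chi_\mu(x)\overline{\chi_\nu(x)}$ there (using $\sigma^{|\mu|}(x)=\sigma^{|\nu|}(x)$ automatically). For the left-hand side I invoke Lemma \ref{computeinnerprod}: if $|\mu|\neq|\nu|$ then $\Phi_\infty(S_{\chi_\mu}S_{\chi_\nu}^*)=0$, matching; if $|\mu|=|\nu|=:\ell$, then since $\beta_k=0$ for all $k$ (computed in Example \ref{betakcomp}) the formula collapses to $\Phi_\infty(S_{\chi_\mu}S_{\chi_\nu}^*)={}_{C(\Omega_{\pmb{A}})}(\chi_\mu|\chi_\nu)=\chi_\mu\overline{\chi_\nu}$, which is exactly the right-hand side. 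So the two maps agree on a dense $*$-subalgebra and therefore everywhere.

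The main obstacle I anticipate is purely bookkeeping: pinning down the correspondence between the frame elements $\chi_\rho\in E^{\otimes|\rho|}$ and the groupoid generators precisely enough that the supports and the pointwise values line up, and in particular making sure the left inner product ${}_{C(\Omega_{\pmb{A}})}(\chi_\mu|\chi_\nu)=\chi_\mu\overline{\chi_\nu}$ from Example \ref{localexamnple} is evaluated with the correct arguments after the shifts are accounted for. A secondary subtlety is continuity of the right-hand map $f\mapsto f|_{\mathcal{R}_{\pmb{A}}^{(0)}}$: this is the standard restriction-to-unit-space conditional expectation on an étale groupoid $C^*$-algebra, which is positive and contractive, so extending the identity from $C_c$ to the closure is automatic — but one should note it explicitly. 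Finally, one should remark that since $\Phi_\infty$ is independent of the choice of frame (as established in the discussion following Lemma \ref{computeinnerprod}), there is no loss in using the particular partition-of-unity frame from Example \ref{betakcomp}, and the identity $\Phi_\infty(f)(x)=f(x,0,x)$ is frame-independent as it must be.
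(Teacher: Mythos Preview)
Your proposal is correct and follows essentially the same reduction as the paper: check the identity on spanning elements $S_\mu S_\nu^*$ and use that $\beta_\ell=0$ in the Cuntz--Krieger model. The main difference is one of economy. The paper recomputes $\Phi_\ell(S_\mu S_\nu^*)$ from scratch via the frame and the transfer operator $\mathfrak{L}$, showing the expression stabilises at $\mu(x)\nu^*(x)=f_{\mu,\nu}(x,0,x)$ for all $\ell>|\mu|$. You instead invoke Lemma~\ref{computeinnerprod} directly, which already gives $\Phi_\infty(S_\mu S_\nu^*)={}_{C(\Omega_{\pmb{A}})}(\mu|\nu)$ once $\beta_\ell=0$ is known from Example~\ref{betakcomp}, and then identify the left inner product on $E^{\otimes m}\cong {}_{\mathrm{id}}C(\Omega_{\pmb{A}})_{(\sigma^m)^*}$ as pointwise multiplication $\mu\overline{\nu}$. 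Your route is shorter and avoids the transfer-operator manipulation; the paper's route has the minor advantage of making the eventual constancy of $\Phi_\ell$ explicit and of working with arbitrary $\mu,\nu\in E^{\otimes m}$ rather than only frame elements, but this buys nothing extra since the frame elements already span. The bookkeeping subtlety you flag---that the left inner product on $E^{\otimes m}$ really is $f\overline{g}$ after the identification with $C(\Omega_{\pmb{A}})$---is the one place to be careful, and it holds because the left action on $E^{\otimes m}$ is the identity action.
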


\begin{proof}
It suffices to prove that for $f_{\mu,\nu}\in C_c(\mathcal{R}_{\pmb{A}})$ defined from an element 
$S_\mu S_\nu^*$, where $\mu,\nu\in E^{\ox m}$, the identity $\Phi_\ell(S_\mu S_\nu^*)(x)=f_{\mu,\nu}(x,0,x)$ 
holds whenever $\ell>m$. We note that for general homogeneous 
$\mu,\nu\in \mathcal{F}^{\textnormal{alg}}_E$:
$$f_{\mu,\nu}(x,n,y)=
\begin{cases}
\mu(x)\nu^{*}(y) \;&\mbox{if}\; n=|\mu|-|\nu|\;\mbox{and} \; \sigma^{|\mu|}(x)=\sigma^{|\nu|}(y),\\
0 \;&\mbox{otherwise}. \end{cases}
$$
Here we are using the fact that $E^{\ox m}\cong C(\Omega_{\pmb{A}})$ as linear spaces
for any $m$ to identify $\mu$ and $\nu$ with 
functions. We denote the conjugate function by $\nu^{*}$ to avoid notational ambiguity later. 
Let $(e_j)_{j=1}^M$ denote the frame from Example \ref{betakcomp}, associated with a partition of unity subordinate to the 
cover $(U_j)_{j=1}^M$. Note that for $\rho=(\rho_1,\rho_2,\dots,\rho_\ell)$, we identify $e_\rho$ with the function
$$
e_\rho(x):=\chi_\rho(x).
$$
 We write
$$
\Phi_\ell(S_\mu S_\nu^*)=
\sum_{|\rho|=\ell} {}_{C(\Omega_{\pmb{A}})}(\mu\otimes (\nu|e_{\underline{\rho}})_{C(\Omega_{\pmb{A}})} e_{\overline{\rho}}|e_\rho),$$
where $|\underline{\rho}|=m=|\nu|$. After some short computations, we see that
\begin{align*}
\left[\sum_{|\rho|=\ell} {}_{C(\Omega_{\pmb{A}})}(\mu\otimes (\nu|e_{\underline{\rho}})_{C(\Omega_{\pmb{A}})} e_{\overline{\rho}}|e_\rho)\right](x)
&=\sum_{|\rho|=\ell}\mu(x)\mathfrak{L}^m[\nu^{*}\chi_{\underline{\rho}}](\sigma^m(x))\chi_{\overline{\rho}}(\sigma^m(x))\chi_{\rho}(x)\\
&=\sum_{|\rho|=\ell}\mu(x)\mathfrak{L}^m[\nu^{*}\chi_{\underline{\rho}}](\sigma^m(x))
\chi_{\overline{\rho}}^2(\sigma^m(x))\chi_{\underline{\rho}}(x)\\
&=\sum_{|\rho|=\ell}\sum_{\sigma^m(x)=\sigma^m(y)}\mu(x)\nu^{*}(y)\chi_{\underline{\rho}}(y)
\chi_{\overline{\rho}}^2(\sigma^m(x))\chi_{\underline{\rho}}(x)\\
&=\sum_{|\rho|=\ell}\mu(x)\nu^{*}(x)\chi_{\rho}^2(x)=
\mu(x)\nu^{*}(x)=f_{\mu,\nu}(x,0,x).
\end{align*}
We used the injectivity of $\sigma$ on $U_j$ in the third equality. 
\end{proof}

\subsection{A bounded Kasparov module for $[\mathrm{ext}]$}

We equip $\O_{E}$ with the $A$-valued inner product
$$
(S_1|S_2)_A:=\Phi_\infty(S_1^*S_2),\qquad S_1,\,S_2\in \O_E.
$$
Completing $\O_E$ (modulo the vectors of zero length) with respect to $\Phi_\infty$ 
yields a right $A$-Hilbert $C^*$-module 
that we denote by $\phimod$. The module $\phimod$ carries a left action of $\O_E$ given by 
extending the multiplication action of $\O_E$ on itself.

\begin{ex}
For a Cuntz-Krieger algebra defined from the $N\times N$-matrix $\pmb{A}$, 
$\Xi_{C(\Omega_{\pmb{A}})}$ coincides with the left regular 
representation $L^2(\mathcal{R}_{\pmb{A}})_{C(\Omega_{\pmb{A}})}$ of the groupoid 
model $O_{\pmb{A}}\cong C^*(\mathcal{R}_{\pmb{A}})$ by Lemma \ref{psiinftyandunit}. 
\end{ex}

By considering the linear span of the image of the generators $S_\nu$, $\nu\in \algFock$, inside
the module $\phimod$, we obtain an isometrically embedded copy of the Fock space $\Fock$.
This fact follows from the identity 
$$(S_\mu|S_\nu)_A=\Phi_\infty(S_\mu^*S_\nu)=( \mu|\nu)_A,$$
using that $S_\mu^*S_\nu=( \mu|\nu)_A$ in $O_E$.
We let $Q$ be the projection on this copy of the Fock space.

\begin{thm}[Proposition 3.14 of \cite{RRS}]
The tuple $(\O_E,\phimod,2Q-1)$ is an odd Kasparov module representing
the class of the extension $[\mathrm{ext}]$ defined in \eqref{eq:ext}.
\end{thm}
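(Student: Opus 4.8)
To show that $(\O_E,\phimod,2Q-1)$ is an odd Kasparov module representing $[\mathrm{ext}]$, I would proceed in three stages: first verify the Kasparov module axioms, then identify the class with the extension class. The axioms to check are that $(2Q-1)^2 = 1$ modulo compacts (clear, since $2Q-1$ is a self-adjoint involution, so $(2Q-1)^2 = 1$ exactly), that $[2Q-1, S_\nu] = 2[Q,S_\nu]$ is in $\K_A(\phimod)$ for all $S_\nu$ with $\nu \in \algFock$ (and hence for a dense subalgebra of $\O_E$), and that the left action of $\O_E$ on $\phimod$ is by adjointable operators, which is automatic from the definition of the module via left multiplication together with the fact that $\Phi_\infty$ is a positive $A$-bilinear functional.

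\textbf{The compactness of commutators.} The heart of the verification is that $[Q, S_\nu]$ is $A$-compact. First I would compute the action of $S_\nu$ on the image of $\Fock$ inside $\phimod$: since $\Fock$ embeds isometrically via $\mu \mapsto S_\mu$, and $S_\nu S_\mu = S_{\nu\ox\mu}$, the operator $S_\nu$ maps $Q\phimod$ into $Q\phimod$, acting there as the creation operator $T_\nu$ on $\Fock$. Therefore $Q S_\nu Q = S_\nu Q$ (identifying $Q\phimod$ with $\Fock$), which gives $(1-Q)S_\nu Q = 0$, so $[Q,S_\nu] = QS_\nu(1-Q) - (1-Q)S_\nu Q = QS_\nu(1-Q)$. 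The remaining task is to show $QS_\nu(1-Q)$ is compact. For this I would use that on the orthogonal complement of $\Fock$ in $\phimod$ — which is spanned by the images of elements $S_\mu S_\eta^*$ with $|\eta| > 0$ — the operator $S_\nu$ acts by prepending $\nu$, and the result lands back in $(1-Q)\phimod$ unless a cancellation $S_\nu S_\mu S_\eta^*$ with suitable lengths produces something of nonnegative degree that re-enters $\Fock$. By Lemma \ref{computeinnerprod}, the inner product structure is controlled by the asymptotic Watatani indices, and I expect that $QS_\nu(1-Q)$ has finite rank — in fact range contained in the finitely generated projective module $E^{\ox|\nu|} \subseteq \Fock$ — making compactness immediate. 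This is precisely the content cited from \cite[Proposition 3.14]{RRS}, and I would lean on that computation.

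\textbf{Identifying the class.} Once the axioms hold, I would identify the $KK^1$ class. The standard machinery (e.g. \cite{Kas1, BJ}) says an odd Kasparov module of the form $(\O_E, \phimod, 2Q-1)$ with $Q$ a projection represents the class of the semisplit extension obtained by pulling back the Toeplitz-type extension $0 \to \K_A(Q\phimod) \to \{T \in \End_A^*(\phimod): [Q,T]\in\K\} \to \cdots \to 0$ along the left action $\O_E \to \End_A^*(\phimod)$. I would check that $Q\phimod \cong \Fock$ as $A$-modules (established above), that the compression $T \mapsto QTQ$ sends the left action of $\O_E$ to the Toeplitz operators $T_\nu$ generating $\T_E$, and that the kernel of the resulting map $\O_E \to \O_E$ (through Toeplitz operators) reproduces exactly the ideal $\K_A(\Fock)$ of \eqref{eq:ext}. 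The upshot is a commutative diagram of extensions with $\K_A(\Fock)$ Morita equivalent to $A$, matching \eqref{eq:ext} up to the stabilisation implicit in the $KK$-picture, hence the class is $[\mathrm{ext}]$.

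\textbf{The main obstacle} I anticipate is the compactness of $QS_\nu(1-Q)$: one must control how the generators interact with the "tail" part $(1-Q)\phimod$ of the module, which is where the full strength of Assumption \ref{ass:one} and the asymptotic behaviour of the $\mathrm{e}^{\beta_\ell}$ enters — the naive guess that commutators vanish on most of the module is true, but pinning down that the surviving piece has finite rank (rather than merely being a norm-limit of finite-rank operators) requires the explicit formula of Lemma \ref{computeinnerprod} and the frame computations behind it. Everything else is formal once this is in hand, and since the statement is quoted verbatim as \cite[Proposition 3.14]{RRS}, the cleanest route is to cite that proof rather than reconstruct it.
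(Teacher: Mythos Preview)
The paper does not prove this statement at all: it is stated as a quotation of \cite[Proposition 3.14]{RRS} and no argument is given in the present paper. Your proposal ultimately arrives at the same place --- you conclude that the cleanest route is to cite \cite{RRS} --- so in that sense you match the paper exactly.

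The sketch you provide before deferring to the citation is a reasonable outline of how the proof in \cite{RRS} goes, though one point is slightly off: your claim that $QS_\nu(1-Q)$ has range contained in $E^{\ox|\nu|}$ is not quite the right bookkeeping. The complement $(1-Q)\phimod$ contains vectors $W_{\mu,\eta}$ with $|\eta|>0$ and arbitrary $|\mu|$, and $S_\nu W_{\mu,\eta}=W_{\nu\mu,\eta}$; the point is not that the range lands in a single tensor power but rather that, for each generator, only finitely many of the orthogonal summands of $\phimod$ can contribute nontrivially to $QS_\nu(1-Q)$, and on each such summand the operator is visibly finite rank. This is what the frame computations in \cite{RRS} establish. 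Since the paper itself offers nothing beyond the citation, there is no further comparison to make.
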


\begin{ex}
\label{smebexample}
A self Morita equivalence bimodule (SMEB) $E$ is a bimodule over $A$ as above 
whose left and right inner products satisfy the compatibility condition
$$
\mu(\xi|\eta)_A={}_A(\mu|\xi)\eta,\quad\forall \mu,\xi,\eta\in E.
$$
Equivalently, $E$ is equipped with a right inner product and there is an isomorphism 
$A\cong \K_A(E)$ defining the left inner product. In particular, $E$ defines a 
Morita equivalence $A\sim_M A$. 
When $E$ is a SMEB, $\Phi_{\infty}:\O_{E}\to A$ coincides 
with the expectation $\rho:\O_{E}\to \Cc_{E}$ discussed in Remark \ref{circleaction}. 
Therefore
$$
\phimod=\bigoplus_{n\in\Z}E^{\ox n}
$$
where $E^{\ox(-|n|)}=\overline{E}^{\ox |n|}$. In general the module 
$\phimod$ is more complicated.
This fact will be captured by the depth-kore operator 
$\kappa$ (see below in Subsection \ref{subsec:kas-mod}). 
\end{ex}

For $\mu,\,\nu\in \algFock$, we denote the image of the generator $S_\mu S_\nu^*\in \O_E$
in the module $\Xi_{A}$ by $[S_\mu S_\nu^*]=W_{\mu,\nu}$. 
We also use the notation $W_{\mu,\emptyset}:=[S_\mu]$ and $W_{\emptyset, \nu}:=[S_\nu^*]$.
Denote by $\Xi_{A}^{0}$ the completion 
of the fixed point algebra $\Cc_{E}$ in the inner product defined by the restriction of $\Phi_{\infty}$.
For $n\in \Z$, we let $\Xi_A^n$ denote the closed linear span of $\{W_{\mu,\nu}: |\mu|-|\nu|=n\}$ 
inside $\Xi_A$.

\begin{lemma} 
\label{wherewefindpsin}
Recall the unbounded Kasparov module $(\O_E,X_{\core},N)$ from Remark \ref{circleaction}.
The right $A$-module $\Xi_{A}$ decomposes as a tensor product 
$$
\Xi_{A}\cong X_{\core}\otimes_{\Cc_{E}}\Xi^{0}_{A}.
$$ 
Consequently, for $z\in U(1)$ the prescription $U_zW_{\mu,\nu}:=z^{|\mu|-|\nu|}W_{\mu,\nu}$ 
defines a $U(1)$-action on $\Xi_{A}$. The associated projections 
$\Psi_{n}:\Xi_{A}\to \Xi_{A}^{n}$ onto the spectral subspaces are adjointable and 
there is a direct sum decomposition
$$
\Xi_{A}\cong \bigoplus_{n\in\Z} \Xi_{A}^{n}.
$$
\end{lemma}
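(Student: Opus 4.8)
The plan is to establish the tensor product decomposition $\Xi_A \cong X_{\core} \otimes_{\Cc_E} \Xi_A^0$ first, and then deduce everything else from the already-known properties of the circle action on $X_{\core}$. The key observation is that $\O_E$ is an $(\O_E, \Cc_E)$-bimodule whose completion in $\rho$ is $X_{\core}$, and $\Cc_E$ is an $(\Cc_E, A)$-bimodule whose completion in $\Phi_\infty|_{\Cc_E}$ is $\Xi_A^0$; the composite inner product on the balanced tensor product $\O_E \otimes_{\Cc_E} \Cc_E \cong \O_E$ should recover $\Phi_\infty$, because $\Phi_\infty = (\Phi_\infty|_{\Cc_E}) \circ \rho$. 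This last factorization is the crucial algebraic fact: since $\Phi_\infty$ kills all homogeneous components of nonzero degree (Lemma \ref{computeinnerprod}), it factors through the conditional expectation $\rho: \O_E \to \Cc_E$ onto the degree-zero part, and on $\Cc_E$ it agrees with its own restriction. So for $S_1, S_2 \in \O_E$,
$$
\Phi_\infty(S_1^* S_2) = (\Phi_\infty|_{\Cc_E})\bigl(\rho(S_1^* S_2)\bigr),
$$
which is exactly the formula for the interior tensor product inner product of $S_1 \otimes 1$ and $S_2 \otimes 1$ in $X_{\core} \otimes_{\Cc_E} \Xi_A^0$. First I would verify this factorization carefully on the dense $*$-subalgebra generated by the $S_\nu$, then pass to completions to get the isometric isomorphism $\Xi_A \cong X_{\core} \otimes_{\Cc_E} \Xi_A^0$ of right $A$-Hilbert modules.

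Next I would transport the circle action. On $X_{\core}$ the generator $N$ of the circle action is self-adjoint and regular (Remark \ref{circleaction}), and in particular the spectral projections onto the integer eigenspaces are adjointable with $X_{\core} = \bigoplus_{n} (X_{\core})^n$; the unitary $U_z$ acts on $X_{\core}$ with $U_z S_\nu S_\mu^* = z^{|\nu|-|\mu|} S_\nu S_\mu^*$. Tensoring with $\mathrm{id}_{\Xi_A^0}$ gives $U_z \otimes 1$ on $X_{\core} \otimes_{\Cc_E} \Xi_A^0 \cong \Xi_A$, and under the identification of generators this is precisely $U_z W_{\mu,\nu} = z^{|\mu|-|\nu|} W_{\mu,\nu}$; one checks the action is by $A$-linear unitaries and strongly continuous, being the tensor product of a strongly continuous unitary action with the identity. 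The spectral projection $\Psi_n$ onto $\Xi_A^n$ is then $\Psi_n^{X_{\core}} \otimes 1$ where $\Psi_n^{X_{\core}}$ projects onto $(X_{\core})^n$; since the latter is adjointable on $X_{\core}$ and interior tensoring of adjointable operators with the identity yields adjointable operators, each $\Psi_n$ is adjointable on $\Xi_A$. The direct sum decomposition $\Xi_A \cong \bigoplus_n \Xi_A^n$ follows by applying the functor $-\otimes_{\Cc_E} \Xi_A^0$ to $X_{\core} \cong \bigoplus_n (X_{\core})^n$, using that interior tensor product commutes with Hilbert-module direct sums, together with the identification $(X_{\core})^n \otimes_{\Cc_E}\Xi_A^0 \cong \Xi_A^n$ which one reads off on generators $W_{\mu,\nu}$ with $|\mu|-|\nu|=n$.

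The main obstacle I anticipate is the careful handling of the completions and the interior tensor product: one must check that the algebraic tensor product $\O_E \odot_{\Cc_E} \Cc_E$ maps densely and isometrically into $\Xi_A$ under $S \otimes a \mapsto [Sa]$, that the two inner products genuinely match (this reduces to the factorization identity above, but one should be careful that $\rho$ and $\Phi_\infty|_{\Cc_E}$ are the \emph{correct} maps and that no completion subtleties — e.g. vectors of zero length — cause trouble), and that $X_{\core}$ as defined via $\rho$ is exactly the completion appearing in Remark \ref{circleaction}. A secondary point is verifying that $\Xi_A^0$ as defined (completion of $\Cc_E$ in $\Phi_\infty$) carries a genuine left $\Cc_E$-action by adjointable operators so that the interior tensor product makes sense — this follows since $\Phi_\infty|_{\Cc_E}$ is a conditional expectation onto $A$ (a positive $A$-bimodule map), hence the left multiplication action of $\Cc_E$ is adjointable on the completion. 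Once these structural points are in place, the strong continuity of the $U(1)$-action and adjointability of the $\Psi_n$ are formal, and the displayed direct sum decomposition is immediate.
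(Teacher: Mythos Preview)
Your proposal is correct and follows essentially the same approach as the paper: both verify that the multiplication map $\O_E\otimes^{\textnormal{alg}}_{\Cc_E}\Cc_E\to\O_E$ is inner-product preserving, with the key input being Lemma~\ref{computeinnerprod} (that $\Phi_\infty$ annihilates nonzero-degree elements). Your formulation packages this as the factorisation $\Phi_\infty=(\Phi_\infty|_{\Cc_E})\circ\rho$, whereas the paper does the equivalent direct computation on generators $S_{\alpha_i}S_{\beta_i}^*\otimes S_{\mu_i}S_{\nu_i}^*$; the remaining claims about the $U(1)$-action and the $\Psi_n$ are deduced in both cases from the known structure of $X_{\core}$.
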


\begin{proof} Because the multiplication map $\O_{E}\otimes^{\textnormal{alg}}\Cc_{E}\to \O_{E}$ 
has dense range in $\O_{E}$, we only have to verify that the inner products coincide under this map. 
This follows by computing, for $|\mu_{i}|=|\nu_{i}|, i=1,2$:
\begin{align*} 
\langle S_{\alpha_{1}}S^{*}_{\beta_{1}}\otimes S_{\mu_{1}}S_{\nu_{1}}^{*}, 
S_{\alpha_{2}}S^{*}_{\beta_{2}}\otimes S_{\mu_{2}}S_{\nu_{2}}^{*} 
\rangle_{\O_{E}^{\rho}\otimes_{\Cc_{E}}\Xi^{0}_{E}} 
&= \Phi_{\infty}(S_{\nu_{1}}S_{\mu_{1}}^{*}\rho(S_{\beta_{1}}S^{*}_{\alpha_{1}}S_{\alpha_{2}}S^{*}_{\beta_{2}})S_{\mu_{2}}S_{\nu_{2}}^{*})\\
&=\delta_{|\alpha_{1}|-|\beta_{1}|,|\alpha_{2}|-|\beta_{2}|}
\Phi_{\infty} (S_{\nu_{1}}S_{\mu_{1}}^{*}S_{\beta_{1}}S^{*}_{\alpha_{1}}S_{\alpha_{2}}S^{*}_{\beta_{2}}S_{\mu_{2}}S_{\nu_{2}}^{*})\\
&=(S_{\alpha_{1}}S^{*}_{\beta_{1}}S_{\mu_{1}}S_{\nu_{1}}^{*}| S_{\alpha_{2}}S^{*}_{\beta_{2}}S_{\mu_{2}}S_{\nu_{2}}^{*})_{A}, \end{align*}
by Lemma \ref{computeinnerprod}. The statements on the $U(1)$-action 
and adjointability of the projections $\Psi_{n}$ now follow immediately.
\end{proof}

\section{An unbounded representative of the extension class}
\label{sec:unbdd}

In this section we will use ideas from \cite{GM} to define an unbounded operator on the
module $\phimod$. The issues of self-adjointness and regularity will
be rendered trivial by defining our operator in diagonal form. This relies
on having an orthogonal decomposition of our module into finitely generated projective submodules.

\subsection{Brief review of the construction for Cuntz-Krieger algebras}
\label{briefreviewck}
Before going into the general construction, let us briefly recall how the orthogonal decomposition 
into finitely generated projective submodules is constructed for Cuntz-Krieger algebras. This example 
is explained in detail in \cite{GM}. The precise relation to the general construction appearing below 
in Section \ref{subsec:chop-chop} can be found in Subsection \ref{subsec:dec-exam}. 
The Cuntz-Krieger algebras are Cuntz-Pimsner algebras, but the structure in which said 
decomposition becomes more transparent is in the picture using the shift-tail equivalence 
groupoid $\mathcal{R}_{\pmb{A}}$, defined in Equation \eqref{thegroupoidra}.
To decompose the left regular representation $L^{2}(\mathcal{R}_{\pmb{A}})_{C(\Omega_{\pmb{A}})}$ 
into finitely generated projective submodules, both of the parameters $n$ and $k$ have to be 
taken into account. For $k\geq \max\{0,-n\}$, we define the compact sets
\begin{equation}
\label{thegroupoidradecomp}
\mathcal{R}_{\pmb{A}}^{n,k}:=\left\{(x,n,y)\in \mathcal{R}_{\pmb{A}}: 
\sigma^{n+k}(x)=\sigma^k(y)\;\mbox{ and }
k= \max\{0,-n\}\;\mbox{ or }\sigma^{n+k-1}(x)\neq\sigma^{k-1}(y) \; \right\}.
\end{equation}
The modules $C(\mathcal{R}_{\pmb{A}}^{n,k})$ are finitely generated projective 
$C(\Omega_{\pmb{A}})$-modules, and $\oplus_{n,k}C(\mathcal{R}_{\pmb{A}}^{n,k})
\subseteq C_c(\mathcal{R}_{\pmb{A}})$ 
gives a dense $C(\Omega_{\pmb{A}})$-sub-module of $O_{\pmb{A}}$. 
These modules are orthogonal for the canonical $C(\Omega_{\pmb{A}})$-valued inner product on 
$O_{\pmb{A}}$ (for support reasons). The depth-kore operator $\kappa$ we seek should mimic 
the multiplication operator by the function $\kappa_{\pmb{A}}\in C(\mathcal{R}_{\pmb{A}})$ defined by 
\begin{equation}
\label{kappaadef}
\kappa_{\pmb{A}} (x,n,y):=\min \big\{k\geq \max\{0,-n\}: \sigma^{n+k}(x)=\sigma^k(y)\big\}.
\end{equation}
The function $\kappa_{\pmb{A}}$ supplements the cocycle $c$ defined by $c(x,n,y)=n$
to provide the orthogonal decomposition of $L^{2}(\mathcal{R}_{\pmb{A}})_{C(\Omega_{\pmb{A}})}$ 
via $\mathcal{R}_{\pmb{A}}^{n,k}=c^{-1}(n)\cap \kappa_{\pmb{A}}^{-1}(k)$. We now turn to the case of  more general Cuntz-Pimsner algebras and return to this example in Subsection \ref{subsec:dec-exam}.

\subsection{An orthogonal decomposition}
\label{subsec:chop-chop}

To construct a self-adjoint regular operator, we first analyse the structure of the module $\phimod$. More precisely, 
we will construct a densely defined operator $\kappa$ that tames the wild 
structure of $\Fock^{\perp}\subset\Xi_{A}$. 

\begin{rmk}
Further motivation for the construction below can be found when comparing to the SMEB case 
(cf. \cite[Theorem 3.1]{RRS} and Example \ref{smebexample}). 
The negative spectral subspace of $\phimod$ for the number operator in the SMEB case
is given by the direct sum of all powers of $\overline{E}$. 
The right module structure on $\overline{E}$ comes
from the left module structure on $E$. For a SMEB, the change of 
module structure from $E^{\ox \ell}$ to $\overline{E}^{\otimes \ell}$ 
is harmless, as the left and right
module structures are closely related.
In the general case, the two module structures are in principle
(and in practise) quite different. Therefore, when mapping powers of $\overline{E}$ into
$\phimod$ by $\bar{e}\to S_e^*$ orthogonality is not preserved and no isometric property holds.
\end{rmk}

To construct $\kappa$, we will add an additional assumption
regarding the fine structure of the operation $\nu\mapsto \tilde{\nu}$ in 
Assumption \ref{ass:one}. Under Assumption \ref{ass:one}, we can define the 
operator $\topop_\ell:E^{\otimes \ell}\to E^{\otimes \ell}$ by
$$
\topop_\ell\nu:=\tilde{\nu}=\lim_{n\to\infty}\mathrm{e}^{-\beta_n}\nu \mathrm{e}^{\beta_{n-\ell}}.
$$
The map $Z(A)\otimes Z(A^\op)\to 
\End_A^*(E_A^{\otimes \ell})\cap \End^*_A({}_AE^{\otimes \ell})$ defined by $(a_{1}\otimes a_{2}^\op)e:=a_{1}ea_{2}$ is an
injective $*$-homomorphism into the algebra of 
left and right adjointable operators on $E^{\otimes \ell}$. In view of this, the definition of $\topop_\ell$ immediately yields the following.
\begin{lemma}
\label{lem:adj-lim}
The operator $\topop_\ell:E^{\otimes \ell}\to E^{\otimes \ell}$ does not depend on the choice of frame. 
Moreover $\topop_\ell$ is adjointable and positive with respect to both left and right
inner products, and in particular $\topop_\ell\in Z(A)\otimes Z(A^\op)\subset \End_A^*(E_A^{\otimes \ell})\cap \End^*_A({}_AE^{\otimes \ell})$. 
\end{lemma}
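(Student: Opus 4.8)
The plan is to realise $\topop_\ell$ as an operator-norm limit of elements manifestly lying in $Z(A)\otimes Z(A^\op)$, after which every assertion follows from the $C^*$-structure of that subalgebra. For each $n$ let $T_n$ denote the image, under the injective $*$-homomorphism $\iota\colon Z(A)\otimes Z(A^\op)\to\End^*_A(E_A^{\otimes\ell})\cap\End^*_A({}_AE^{\otimes\ell})$ introduced above, of the element $\mathrm{e}^{-\beta_n}\otimes\mathrm{e}^{\beta_{n-\ell},\op}$, so that $T_n\nu=\mathrm{e}^{-\beta_n}\nu\,\mathrm{e}^{\beta_{n-\ell}}$ and $\topop_\ell\nu=\lim_nT_n\nu$ for all $\nu\in E^{\otimes\ell}$ by the definition of $\topop_\ell$ together with Assumption \ref{ass:one}. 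Frame-independence of $\topop_\ell$ is then immediate: the central elements $\mathrm{e}^{\beta_n}$ are frame-independent, and the defining limit mentions no frame.

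The one genuine step is to upgrade this vectorwise convergence to convergence in operator norm. First I would fix a right frame $(e_\rho)_{|\rho|=\ell}$ of $E_A^{\otimes\ell}$ and record the standard bound $\|S\|\leq\sum_{|\rho|=\ell}\|Se_\rho\|\,\|e_\rho\|$, valid for every adjointable $S$ because $S=\sum_\rho\Theta_{Se_\rho,e_\rho}$. Applying it to $S=T_n-T_m$ and using that each $(T_ne_\rho)_n$ converges (hence is Cauchy) by Assumption \ref{ass:one}, one sees that $(T_n)_n$ is Cauchy in the operator norm. Since $\iota$ is an injective $*$-homomorphism of $C^*$-algebras it is isometric, so $(\mathrm{e}^{-\beta_n}\otimes\mathrm{e}^{\beta_{n-\ell},\op})_n$ is Cauchy in $Z(A)\otimes Z(A^\op)$; its limit there is sent by $\iota$ to an adjointable operator whose action on vectors coincides with $\topop_\ell$, so $\topop_\ell$ equals that image. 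In particular $\topop_\ell$ is adjointable and lies in $Z(A)\otimes Z(A^\op)\subset\End^*_A(E_A^{\otimes\ell})\cap\End^*_A({}_AE^{\otimes\ell})$.

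Positivity I would deduce from positivity of the approximants: one has $\mathrm{e}^{-\beta_n}\otimes\mathrm{e}^{\beta_{n-\ell},\op}=\bigl(\mathrm{e}^{-\beta_n/2}\otimes\mathrm{e}^{\beta_{n-\ell}/2,\op}\bigr)^{*}\bigl(\mathrm{e}^{-\beta_n/2}\otimes\mathrm{e}^{\beta_{n-\ell}/2,\op}\bigr)$ in $Z(A)\otimes Z(A^\op)$, using that each $\mathrm{e}^{\beta_j}$ is positive and self-adjoint in $Z(A)$; hence every approximant, and therefore the norm limit, is positive, and composing with the two $*$-homomorphisms into $\End^*_A(E_A^{\otimes\ell})$ and into $\End^*_A({}_AE^{\otimes\ell})$ in turn gives positivity of $\topop_\ell$ with respect to both the right and the left inner product. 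The main obstacle, mild as it is, is exactly the promotion of the pointwise limit coming from Assumption \ref{ass:one} to an operator-norm limit; this is where finite generation of $E^{\otimes\ell}$ (through the finite frame) is used, and with that in hand the remainder is routine --- consistent with the statement's ``immediately yields''.
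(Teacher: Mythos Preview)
Your proof is correct and follows the same underlying idea as the paper's: realise $\topop_\ell$ as a limit of the manifestly positive elements $\mathrm{e}^{-\beta_n}\otimes\mathrm{e}^{\beta_{n-\ell},\op}\in Z(A)\otimes Z(A^\op)$. The organisation differs slightly. The paper first verifies self-adjointness of $\topop_\ell$ by a direct inner-product computation (moving the central elements $\mathrm{e}^{\pm\beta_j}$ across ${}_A(\cdot|\cdot)$ term by term), and only afterwards asserts that $\topop_\ell$ lies in $Z(A)\otimes Z(A^\op)$ as a limit of positive elements there, without spelling out the mode of convergence. You instead invest in upgrading the vectorwise limit of Assumption~\ref{ass:one} to an operator-norm limit via the finite-frame estimate $\|S\|\leq\sum_\rho\|Se_\rho\|\,\|e_\rho\|$, and then read off adjointability, membership in $Z(A)\otimes Z(A^\op)$, and positivity simultaneously from the isometric embedding $\iota$. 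Your route is a little more economical---adjointability comes for free rather than by a separate computation---and it makes explicit the one point the paper leaves implicit, namely why the limit remains in the norm-closed subalgebra $Z(A)\otimes Z(A^\op)$.
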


\begin{proof}
The operator $\Phi_\ell$ and the element $\mathrm{e}^{\beta_\ell}=\Phi_\ell(1)$ are independent of choice of frame, so 
therefore $\topop_\ell$ is independent of choice of frame.
The fact that $\topop_\ell$ is adjointable follows from the adjointability of the left and right actions of $A$, 
and the centrality of $\mathrm{e}^{\beta_n}$. For instance, when $\mu,\nu\in E^{\otimes \ell}$,
\begin{align*} {}_A(\nu|\topop_\ell{\mu})
&=\lim_{n\to\infty}{}_A(\nu|\mathrm{e}^{-\beta_n}\mu \mathrm{e}^{\beta_{n-\ell}})
=\lim_{n\to\infty}{}_A(\nu \mathrm{e}^{\beta_{n-\ell}}|\mu )\mathrm{e}^{-\beta_n}\\
&=\lim_{n\to\infty}\mathrm{e}^{-\beta_n}{}_A(\nu \mathrm{e}^{\beta_{n-\ell}}|\mu )
=\lim_{n\to\infty}{}_A(\mathrm{e}^{-\beta_n}\nu \mathrm{e}^{\beta_{n-\ell}}|\mu )
={}_A(\topop_\ell\nu |\mu ),
\end{align*}
and the proof for the right inner product is similar. That $\topop_\ell\in Z(A)\otimes Z(A^{\op})$ 
and is positive follows because it is the
limit of positive operators in $Z(A)\otimes Z(A^{\op})$.
\end{proof}

In addition to the fact that $\topop_\ell$ is a bimodule morphism, the $\topop_\ell$'s are 
multiplicative in the following sense. For $\mu\in E^{\otimes m}$, $\nu\in E^{\otimes \ell}$,
\begin{align}
\topop_{m+\ell}(\nu\ox\mu)&=\lim_{n\to \infty}\mathrm{e}^{-\beta_n}\nu\ox\mu \mathrm{e}^{\beta_{n-m-\ell}}
=\lim_{n\to \infty}\mathrm{e}^{-\beta_n}\nu \mathrm{e}^{\beta_{n-\ell}}\ox \mathrm{e}^{-\beta_{n-\ell}}\mu \mathrm{e}^{\beta_{n-\ell-m}}
=\topop_\ell(\nu)\ox \topop_m(\mu). 
\label{eq:Tee-nice}
\end{align}
The conditional expectation $\Phi_\ell$ applied to $\topop_\ell$ in the tensor power $E^{\ox \ell}$ can 
be computed to be $1_A$:
\begin{align}
\Phi_\ell(\topop_\ell|_{E^{\ox \ell}})&=\sum_{|\rho|=\ell}{}_A(\topop_\ell e_\rho|e_\rho)
=\lim_n\sum_{|\rho|=\ell}{}_A(\mathrm{e}^{-\beta_n}e_\rho \mathrm{e}^{\beta_{n-\ell}}|e_\rho)\nonumber\\
&=\lim_n\sum_{|\rho|=\ell,|\sigma|=n-\ell} \mathrm{e}^{-\beta_n}{}_A(e_\rho\ox e_\sigma|e_\rho\ox e_\sigma)
=1_A.
\label{eq:trace-one}
\end{align}

Regardless of all these properties, we need to impose a further technical 
requirement on the operators $\mathfrak{q}_{\ell}$. We first prove a structural 
result about $\topop_\ell$ assuming that $\topop_1$ has closed range. Given an $A$-bimodule $E$ 
and $c\in Z(A)$, we say that $c$ is \emph{central for the bimodule structure} if for all $e\in E$ the 
equality $ce=ec$ holds
\footnote{Note that a central element in $A$ need not be central for the bimodule structure, 
e.g. in Example \ref{localexamnple}, $a\in C(V)$ is central for the bimodule structure iff $a\circ g=a$.}.

\begin{lemma}
\label{decomposingtopop}
Assume that the range of $\topop_1$ is closed. Then $\topop_{\ell}$ has closed range for any $\ell$. 
Consequently there is an $A$-bilinear projection 
$P_\ell$ on $E^{\otimes \ell}$ such that $\mathfrak{q}_{\ell}$ is invertible on the range of $P_{\ell}$ and 
$
\topop_\ell=\mathfrak{q}_\ell P_\ell.
$
Furthermore, 
\begin{enumerate}
\item[a.] If there exists $c_\ell \in A$ such that $c_{\ell}P_{\ell}=\topop_\ell P_{\ell}$, 
then $c_\ell=\Phi_\ell(P_\ell)^{-1}$ and 
thus $c_\ell$ is invertible and central in $A$. 
\item[b.] If $c_1$ is given by left multiplication by an element in $A$ which is central for the bimodule structure, 
then $c_\ell$ is given by left multiplication by the central invertible element $c_1^\ell\in A$ for all $\ell$.
\end{enumerate}
\end{lemma}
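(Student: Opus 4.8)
The plan is to exploit the multiplicativity of $\topop_\ell$ expressed in \eqref{eq:Tee-nice}, which lets us write $\topop_\ell = \topop_1^{(1)}\topop_1^{(2)}\cdots\topop_1^{(\ell)}$, where $\topop_1^{(j)}$ denotes the operator acting as $\topop_1$ on the $j$-th tensor leg and as the identity on the others; more precisely, $\topop_\ell(e_1\otimes\cdots\otimes e_\ell)=\topop_1(e_1)\otimes\cdots\otimes\topop_1(e_\ell)$. Since $\topop_1\in Z(A)\otimes Z(A^\op)$ by Lemma \ref{lem:adj-lim}, each $\topop_1^{(j)}$ lies in the commutative algebra $Z(A)\otimes Z(A^\op)$ acting on the appropriate leg, and these operators for different legs commute with one another. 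First I would record that if $\topop_1$ has closed range, then (being a positive adjointable operator on a Hilbert module) $0$ is isolated in its spectrum, so there is a spectral projection $P_1$ onto the closure of the range with $\topop_1 P_1=\topop_1$ and $\topop_1$ invertible on $P_1E$; moreover $P_1\in Z(A)\otimes Z(A^\op)$ since it is a spectral projection of an element of that algebra. Then setting $P_\ell:=P_1^{(1)}P_1^{(2)}\cdots P_1^{(\ell)}$ gives an $A$-bilinear projection on $E^{\otimes\ell}$, and $\topop_\ell$ restricted to the range of $P_\ell$ is the product of $\ell$ commuting invertible positive operators, hence invertible there; writing $\mathfrak{q}_\ell$ for the operator that equals $\topop_\ell$ on $P_\ell E^{\otimes\ell}$ and, say, the identity on its complement, we get $\topop_\ell=\mathfrak{q}_\ell P_\ell$ with closed range. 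The main obstacle I anticipate is being careful that closedness of the range of $\topop_1$ really does force $0$ to be isolated in the spectrum — this is exactly the Hilbert-module analogue of the fact that a bounded operator with closed range on a Hilbert space has $0$ either absent from or isolated in the spectrum of $|T|$; here one uses that $\topop_1$ is positive and adjointable on $E_A^{\otimes 1}$, so $\overline{\range\topop_1}$ is orthogonally complemented and $\topop_1$ is bounded below on its range.

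For part (a): suppose $c_\ell\in A$ satisfies $c_\ell P_\ell=\topop_\ell P_\ell$. Applying the conditional expectation $\Phi_\ell$ and using that $\Phi_\ell$ is $A$-bilinear, we get $c_\ell\,\Phi_\ell(P_\ell)=\Phi_\ell(\topop_\ell P_\ell)=\Phi_\ell(\topop_\ell|_{P_\ell E^{\otimes\ell}})$. Now I would compute $\Phi_\ell(\topop_\ell P_\ell)$ directly: since $\topop_\ell=\topop_\ell P_\ell$ already (because $P_\ell$ is the projection onto the closure of the range and $\topop_\ell$ annihilates the complement), we have $\Phi_\ell(\topop_\ell P_\ell)=\Phi_\ell(\topop_\ell|_{E^{\otimes\ell}})=1_A$ by \eqref{eq:trace-one}. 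Hence $c_\ell\,\Phi_\ell(P_\ell)=1_A$. Since $\Phi_\ell$ is positive and $P_\ell$ is a projection, $\Phi_\ell(P_\ell)$ is a positive central element of $A$ (centrality because $P_\ell$ commutes with the $A$-actions, so $\Phi_\ell(aP_\ell)=\Phi_\ell(P_\ell a)$, i.e. $a\Phi_\ell(P_\ell)=\Phi_\ell(P_\ell)a$), and the identity $c_\ell\Phi_\ell(P_\ell)=1_A$ shows $\Phi_\ell(P_\ell)$ is invertible with inverse $c_\ell$; one should also note $\Phi_\ell(P_\ell)c_\ell=1_A$ by taking adjoints or by the same computation with $P_\ell$ on the other side, so $c_\ell=\Phi_\ell(P_\ell)^{-1}$ is central and invertible as claimed.

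For part (b): assume $c_1\in A$ is central for the bimodule structure and $c_1P_1=\topop_1P_1$. Then left multiplication by $c_1$ commutes with the right $A$-action by definition of centrality for the bimodule structure, and also commutes with the left $A$-action since $c_1\in Z(A)$; thus $c_1$ acts as an element of $Z(A)\otimes Z(A^\op)$ on $E$, and on $E^{\otimes\ell}$ its action on the $j$-th leg equals the action of $c_1$ on the whole module (again by bimodule-centrality, $c_1$ slides through tensor factors: $c_1(e_1\otimes\cdots\otimes e_\ell)=(c_1e_1)\otimes\cdots=e_1\otimes(c_1e_2)\otimes\cdots$, etc.). Using the factorisation $\topop_\ell=\topop_1^{(1)}\cdots\topop_1^{(\ell)}$ and $P_\ell=P_1^{(1)}\cdots P_1^{(\ell)}$ together with $\topop_1^{(j)}P_1^{(j)}=c_1P_1^{(j)}$ leg-by-leg (which holds because $\topop_1P_1=c_1P_1$ and all factors on other legs are identities), I would compute
\[
\topop_\ell P_\ell=\prod_{j=1}^\ell \topop_1^{(j)}P_1^{(j)}=\prod_{j=1}^\ell c_1 P_1^{(j)}=c_1^\ell\prod_{j=1}^\ell P_1^{(j)}=c_1^\ell P_\ell,
\]
where the factors all commute so the product is unambiguous and $c_1^\ell$ collects out front because $c_1$ is central. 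By part (a), $c_\ell$ is uniquely determined by the property $c_\ell P_\ell=\topop_\ell P_\ell$ (as $c_\ell=\Phi_\ell(P_\ell)^{-1}$), so $c_\ell=c_1^\ell$, which is central and invertible since $c_1$ is. I expect the only delicate point in (b) is justifying the "sliding" of $c_1$ through tensor legs, which is precisely the content of $c_1$ being central for the bimodule structure, so it is really a matter of spelling out the definition rather than a genuine difficulty.
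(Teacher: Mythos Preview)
Your proposal is correct and follows essentially the same route as the paper: both proofs use the closed-range/complemented-range fact to extract a range projection $P_1$ for $\topop_1$, define $P_\ell$ as the $\ell$-fold tensor product $P_1\otimes\cdots\otimes P_1$, invoke the multiplicativity \eqref{eq:Tee-nice} to identify the range of $\topop_\ell$ with $P_\ell E^{\otimes\ell}$, and then handle (a) by applying $\Phi_\ell$ together with \eqref{eq:trace-one} and (b) by sliding $c_1$ through the tensor legs via bimodule-centrality. Your spectral-projection argument for $P_1$ is equivalent to the paper's appeal to \cite[Theorem 3.2]{lancesbook}.

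One small notational point: in the paper $\topop_\ell$ and $\mathfrak{q}_\ell$ are literally the same symbol (\verb|\topop| is a macro for $\mathfrak{q}$), so the displayed identity $\topop_\ell=\mathfrak{q}_\ell P_\ell$ simply asserts $\topop_\ell=\topop_\ell P_\ell$, i.e.\ that $P_\ell$ is the range projection. Your redefinition of $\mathfrak{q}_\ell$ as ``$\topop_\ell$ on the range and the identity on the complement'' is therefore unnecessary, though harmless.
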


\begin{proof}
The assumption that $\topop_1$ has closed range guarantees that the range is complemented (see \cite[Theorem 3.2]{lancesbook}), 
and $E=\ker(\topop_1)\oplus \textnormal{im}(\topop_{1})$ with 
$M:=\topop_1E$ a sub-bimodule. We let $P_1$ denote the 
projection onto $M$, so $P_{1}$ commutes with $A$.  An 
easy induction using Equation \eqref{eq:Tee-nice} 
shows that $\topop_\ell E^{\otimes \ell}=M^{\otimes \ell}$.
Since $P_1$ is also a bimodule map, the projection onto $M^{\ox \ell}$ is easily seen to be
$P_\ell=P_1\ox P_1\ox\cdots\ox P_1$, and $P_{\ell}$ commutes with $A$ as well.

By definition $P_\ell \topop_\ell=\topop_{\ell}$ and thus $\topop_\ell=\topop_\ell P_\ell$ by taking adjoints. 
Moreover, from the decomposition $\ker(\topop_{\ell})\oplus \textnormal{im }(\topop_{\ell})$, 
we see that $\topop_\ell$ is injective on $\textnormal{im } P_{\ell}$. It is 
surjective for if $x=P_{\ell}y$ then there exists $z$ such that $x=P_{\ell}y=\topop_{\ell}z=\topop_{\ell}P_{\ell}z$. 

If there exists $c_\ell\in A$ whose compression with $P_{\ell}$ 
coincides with $\topop_{\ell}$, that is $\topop_{\ell}=P_{\ell}c_{\ell}P_{\ell}=c_{\ell}P_{\ell}$, 
then $1_A=c_\ell\Phi_\ell(P_\ell)$ so 
$\Phi_\ell(P_\ell)$ is invertible in $A$ and $c_{\ell}=\Phi_{\ell}(P_{\ell})^{-1}$. 
Moreover, $\topop_\ell$ commutes with both actions of $A$ so $c_\ell$
is central in $A$. This proves a. 

If $c_1$ is given by left multiplication by a, necessarily central, element in $A$, 
$c_1=\Phi_1(P_1)^{-1}$. The assumption that $c_1$ is central for the bimodule structure, along with
the fact that $\topop$ and $P_1$ are bimodule maps gives
$$
\topop_\ell=c_\ell P_\ell=\topop_1\ox\cdots\ox\topop_1
=c_1P_1\ox  \cdots\ox c_1P_1=c_1^\ell P_1\ox\cdots\ox P_1=c_1^\ell P_\ell.
$$  
This proves b.
\end{proof}

\begin{rmk}
If we can write $\topop_\ell=c_\ell P_\ell=P_\ell c_\ell$ for an $A$-bilinear projection 
$P_\ell$ on $E^{\otimes \ell}$ and a central invertible element $c_\ell\in A$ then 
it trivially holds that $\topop_\ell$ has closed range.
\end{rmk}

\begin{ass}
\label{ass:two}
For any $\ell$, we can write $\topop_\ell=c_\ell P_\ell=P_{\ell}c_{\ell}$ where 
$P_{\ell}\in \End^{*}_{A}(E^{\otimes \ell})$ is a projection and $c_\ell$ 
is given by left-multiplication by an element in $A$. 
\end{ass}

\begin{rmk} 
\label{centralityandasstwo}
It follows from Lemma \ref{decomposingtopop} that if a decomposition $\topop_{\ell}=c_{\ell}P_{\ell}$ 
of the kind in Assumption \ref{ass:two} exists, it is unique and of a very specific form. 
Indeed, each $c_\ell$ is central in $A$, invertible and $c_\ell=\Phi_\ell(P_\ell)^{-1}$. 
Lemma \ref{decomposingtopop} allows one to check Assumption \ref{ass:two} in practice. 
A sufficient condition for Assumption \ref{ass:two} to hold is that $\topop_{1}$ is closed 
with decomposition $\topop_1=c_{1}P_{1}$ 
for an element $c_{1}\in A$ which is central for the bimodule structure on $E$. 
For instance, if $\beta_1$ is central for the bimodule structure on $E$, 
$c_\ell=\mathrm{e}^{-\beta_\ell}=\mathrm{e}^{-\ell\beta_1}$ is central for the bimodule structure on $E$ 
and $P_\ell={\rm Id}_{E^{\ox \ell}}$. We remark that 
it is unclear if the property $c_1\in A$ suffices to guarantee that Assumption \ref{ass:two} holds.
\end{rmk}

\begin{ex}
Graph $C^*$-algebras defined from a primitive graph satisfy Assumption \ref{ass:two} 
by \cite[Equation 3.7]{RRS}. Cuntz-Krieger algebras trivially satisfy Assumption \ref{ass:two} 
because $\beta_\ell=0$ for all $\ell$ in this case and $\topop_\ell=\mathrm{Id}_{E^{\otimes \ell}}$ (see Example \ref{betakcomp}).
\end{ex}

{\bf We assume that Assumption \ref{ass:two} holds for the remainder of the paper.}

To simplify notations, we write $P=\sum_\ell P_\ell$ interpreted as a strict sum. 
We also write $\mathfrak{q}=\oplus_\ell\mathfrak{q}_\ell$ which we interpret as 
a densely defined operator with domain $\algFock$. 
We can now turn to generating the direct sum decomposition of $\phimod$.
We recall the notation $W_{\mu,\nu}$ for the class of $S_\mu S_\nu^*$ in $\phimod$.
\begin{lemma}
\label{wmunuwm}
For all homogeneous $\mu,\,\nu\in \algFock$, $W_{\mu,\nu}=W_{\mu,P\nu}$ in $\phimod$.
\end{lemma}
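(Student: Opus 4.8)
The statement to be proved is $W_{\mu,\nu} = W_{\mu,P\nu}$ in $\phimod$ for homogeneous $\mu,\nu \in \algFock$. Since $\phimod$ is the completion of $\O_E$ in the inner product $(S_1|S_2)_A = \Phi_\infty(S_1^*S_2)$, two classes $[S_\mu S_\nu^*]$ and $[S_\mu S_{P\nu}^*]$ agree precisely when the difference $[S_\mu S_{(1-P)\nu}^*]$ has zero length, i.e. when $\Phi_\infty\bigl(S_{(1-P)\nu}S_\mu^* S_\mu S_{(1-P)\nu}^*\bigr) = 0$. So the plan is to reduce to a length computation and to show that $(1-P)\nu$ is ``killed'' by $\Phi_\infty$ in the appropriate sense. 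Concretely, write $\nu_0 := (1-P)\nu = (1-P_{|\nu|})\nu \in \ker \topop_{|\nu|}$ using that $P = \sum_\ell P_\ell$ acts as $P_{|\nu|}$ on the homogeneous component $E^{\ox|\nu|}$ (Assumption \ref{ass:two} / Lemma \ref{decomposingtopop}).

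\textbf{Key steps.} First I would observe that it suffices to show $W_{\emptyset,\nu_0} = 0$ for every $\nu_0 \in \ker\topop_\ell$ and every $\ell$; indeed, since $\O_E$ acts on $\phimod$ by adjointable operators (bounded for the $C^*$-norm), $W_{\mu,\nu_0} = S_\mu \cdot W_{\emptyset,\nu_0}$, so vanishing of $W_{\emptyset,\nu_0}$ forces vanishing of $W_{\mu,\nu_0}$, and then $W_{\mu,\nu} - W_{\mu,P\nu} = W_{\mu,\nu_0} = 0$. Second, to show $W_{\emptyset,\nu_0} = 0$ I would compute its squared length: $\|W_{\emptyset,\nu_0}\|^2 = \Phi_\infty(S_{\nu_0} S_{\nu_0}^*)$. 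By Lemma \ref{computeinnerprod} (Equation \eqref{eq:mu-nu-beta}, with $\mu = \nu = \nu_0$, both of degree $\ell$, so $|\nu| = \ell$ there),
\[
\Phi_\infty(S_{\nu_0}S_{\nu_0}^*) = \lim_{k\to\infty} {}_A\bigl(\nu_0 \,\big|\, \mathrm{e}^{-\beta_k}\nu_0\, \mathrm{e}^{\beta_{k-\ell}}\bigr).
\]
Since ${}_A(\cdot|\cdot)$ is continuous and $\mathrm{e}^{-\beta_k}\nu_0\,\mathrm{e}^{\beta_{k-\ell}} \to \topop_\ell\nu_0 = 0$ as $k\to\infty$ by the definition of $\topop_\ell$ (here using $\nu_0 \in \ker\topop_\ell$), the limit is ${}_A(\nu_0|0) = 0$. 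Hence $W_{\emptyset,\nu_0} = 0$, and the statement follows. For a fully general (not necessarily simple-tensor) homogeneous $\nu$, one writes it as a finite sum of simple tensors and uses bilinearity, or — cleaner — notes that the whole argument only used $\nu_0 \in \ker\topop_\ell$, which is a closed submodule, so no simple-tensor hypothesis is actually needed.

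\textbf{Main obstacle.} The argument is essentially routine once the right reduction is found; the only genuine point requiring care is the interchange of limit and inner product, i.e. justifying $\lim_k {}_A(\nu_0 | x_k) = {}_A(\nu_0 | \lim_k x_k)$ where $x_k = \mathrm{e}^{-\beta_k}\nu_0\,\mathrm{e}^{\beta_{k-\ell}}$. This is immediate from joint continuity of the $A$-valued inner product in the norm topology together with the norm-convergence $x_k \to \topop_\ell\nu_0$ guaranteed by Assumption \ref{ass:one} and the definition of $\topop_\ell$ (with the $O(k^{-\delta})$ rate it is even quantitative). A secondary bookkeeping point is confirming that $P$, defined as the strict sum $\sum_\ell P_\ell$, restricts to $P_\ell$ on $E^{\ox\ell} \subset \Fock$ and hence that $(1-P)\nu$ lies in $\ker\topop_{|\nu|}$ — this is exactly the content of the decomposition $\topop_\ell = c_\ell P_\ell$ from Assumption \ref{ass:two} together with invertibility of $c_\ell$, so $\ker\topop_\ell = \ker P_\ell = \range(1-P_\ell)$.
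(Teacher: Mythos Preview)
Your proposal is correct and follows essentially the same approach as the paper: both arguments compute the squared length of the difference via $\Phi_\infty$, invoke Lemma~\ref{computeinnerprod} to express the result as a left inner product against $\topop_\ell\nu$ (or $\topop_\ell P\nu$), and then use $\topop_\ell(1-P_\ell)=0$ (equivalently $\topop_\ell P_\ell=\topop_\ell$) from Assumption~\ref{ass:two}. Your preliminary reduction to $\mu=\emptyset$ via the module action of $S_\mu$ is a mild streamlining---the paper instead keeps $\mu$ in place and expands $(W_{\mu,\nu}-W_{\mu,P\nu}\,|\,W_{\mu,\nu}-W_{\mu,P\nu})_A$ into four terms which cancel pairwise---but the underlying mechanism is identical.
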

\begin{proof}
We compute the module norm of the difference of $[S_\mu S_\nu^*]=W_{\mu,\nu}$
and $[S_\mu S_{P{\nu}}^*]=W_{\mu,P{\nu}}$ and show that it is zero. 
Write $\ell:=|\nu|$. Using Equation \eqref{eq:mu-nu-beta} and the definition of $\topop_\ell$ we have
\begin{align*}
&(W_{\mu,\nu}-W_{\mu,P{\nu}}|W_{\mu,\nu}-W_{\mu,P{\nu}})_A\\
&=\Phi_\infty\left(S_\nu(\mu|\mu)_AS_\nu^*-S_{P{\nu}}(\mu|\mu)_AS_\nu^*
-S_\nu(\mu|\mu)_AS_{P{\nu}}^*+S_{P{\nu}}(\mu|\mu)_AS_{P{\nu}}^*\right)\\
&={}_A(\nu(\mu|\mu)_A|\topop_\ell{\nu})-{}_A(P{\nu}(\mu|\mu)_A|\topop_\ell{\nu})
-{}_A(\nu(\mu|\mu)_A|\topop_\ell P{\nu})+{}_A(P{\nu}(\mu|\mu)_A|\topop_\ell P{\nu})=0.\qedhere
\end{align*}
\end{proof}

The next result shows that 
if $\{e_1,e_2,\ldots, e_N\}$ is a frame for $E$ as a right module, then $\{P_ke_\rho\}_{|\rho|=\ell}$
is a frame for $M^{\ox \ell}=P_\ell E^{\ox \ell}$, and similarly for left frames $\{f_1,f_2,\ldots, f_M\}$.
We just state the result for the left frame, as this is all we will require below.

\begin{lemma}
Let $f_1,\dots,f_M$ be a frame for the left module ${}_AE$. Then
with $Pf_\rho=P_1f_{\rho_1}\ox\cdots\ox P_1f_{\rho_\ell}$ we have for
all $\mu \in E^{\otimes \ell}$
\begin{equation}\label{eq:framebehaviour}
\sum_{|\rho|=\ell}{}_A(\mu |Pf_{\rho})f_\rho=P\mu=
\sum_{|\rho|=\ell}{}_A(P\mu|f_\rho)f_\rho=
\sum_{|\rho|=\ell}{}_A(\mu|f_{\rho})Pf_\rho=
\sum_{|\rho|=\ell}{}_A(\mu|Pf_{\rho})Pf_\rho.
\end{equation}
\end{lemma}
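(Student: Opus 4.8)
The plan is to verify the chain of four equalities in \eqref{eq:framebehaviour} by exploiting that $P_1$ (and hence $P_\ell=P_1^{\otimes\ell}$) is a self-adjoint $A$-bilinear idempotent, together with the frame reconstruction formula for the left inner product. First I would record the left-frame reconstruction identity: since $f_1,\dots,f_M$ is a frame for ${}_AE$, the induced family $\{f_\rho\}_{|\rho|=\ell}$ is a frame for ${}_AE^{\otimes\ell}$, so $\sum_{|\rho|=\ell}{}_A(\xi|f_\rho)f_\rho=\xi$ for every $\xi\in E^{\otimes\ell}$; this is the standard multi-index argument using that the left inner product on a tensor power is built from the left inner products on the factors. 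Then the middle equality $\sum_{|\rho|=\ell}{}_A(P\mu|f_\rho)f_\rho=P\mu$ is just this reconstruction formula applied to $\xi=P\mu=P_\ell\mu$, which lies in $E^{\otimes\ell}$.

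Next I would handle the remaining three equalities by moving $P$ across the left inner product. The operator $P_\ell$ is adjointable with respect to the left inner product and self-adjoint (Lemma \ref{decomposingtopop} gives $P_\ell=P_\ell^*=P_\ell^2$ as an $A$-bilinear projection), so ${}_A(\mu|Pf_\rho)={}_A(P\mu|f_\rho)$ for all $\mu$; this immediately identifies the first sum $\sum {}_A(\mu|Pf_\rho)f_\rho$ with $\sum {}_A(P\mu|f_\rho)f_\rho=P\mu$. For the third sum $\sum {}_A(\mu|f_\rho)Pf_\rho$, I would pull $P$ out of the sum using $A$-linearity of $P$ on the left — more precisely, $Pf_\rho=P(f_\rho)$ and $P$ commutes with left multiplication by ${}_A(\mu|f_\rho)\in A$ because $P_\ell$ is $A$-bilinear (Lemma \ref{decomposingtopop} shows $P_\ell$ commutes with both actions of $A$) — so $\sum {}_A(\mu|f_\rho)Pf_\rho=P\big(\sum {}_A(\mu|f_\rho)f_\rho\big)=P\mu$. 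Finally, the fourth sum combines both manipulations: replace ${}_A(\mu|Pf_\rho)$ by ${}_A(P\mu|f_\rho)$ using self-adjointness, then pull $P$ out using bilinearity, giving $P(\sum {}_A(P\mu|f_\rho)f_\rho)=P(P\mu)=P^2\mu=P\mu$, where idempotency of $P_\ell$ is used in the last step.

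I do not expect a genuine obstacle here; the only point requiring a little care is the justification that $\{f_\rho\}_{|\rho|=\ell}$ is a frame for the left module ${}_AE^{\otimes\ell}$ and that the left inner product on the tensor power satisfies ${}_A(\xi_1\otimes\cdots\otimes\xi_\ell|\eta_1\otimes\cdots\otimes\eta_\ell)={}_A(\xi_1\,{}_A(\xi_2\,\cdots\,{}_A(\xi_\ell|\eta_\ell)\cdots|\eta_2)|\eta_1)$, which makes the reconstruction formula propagate from $\ell=1$ to general $\ell$ by induction — this is entirely parallel to the right-module statement quoted just before the lemma and can be cited or dispatched in one line. The proof is then three or four lines: state the reconstruction identity, invoke self-adjointness and $A$-bilinearity of $P_\ell$ (both from Lemma \ref{decomposingtopop}), and read off the four equalities.
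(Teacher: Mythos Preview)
Your proposal is correct and follows essentially the same approach as the paper: use left-frame reconstruction for $\{f_\rho\}_{|\rho|=\ell}$, move $P$ across the left inner product by self-adjointness, pull $P$ out of the sum by $A$-bilinearity, and finish with $P^2=P$. The only cosmetic difference is that the paper cites Lemma~\ref{lem:adj-lim} (adjointability of $\topop_\ell$, whence of its range projection) rather than Lemma~\ref{decomposingtopop} for the self-adjointness step, and it takes the fact that $\{f_\rho\}$ is a left frame for $E^{\otimes\ell}$ as read rather than spelling out the induction you mention.
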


\begin{proof}
We use Lemma \ref{lem:adj-lim} to compute
\begin{align*}
\sum_{|\rho|=\ell}{}_A(\mu|Pf_{\rho})f_\rho
&=\sum_{|\rho|=\ell}{}_A(P\mu|f_{\rho})f_\rho
=P\mu
\end{align*}
since $\{f_\rho\}$ is a frame for $E^{\ox \ell}$. Finally
$$
P\mu=P\Big(\sum_{|\rho|=\ell}{}_A(\mu|f_\rho)f_\rho\Big)=\sum_{|\rho|=\ell}{}_A(\mu|f_\rho)Pf_\rho,
$$
since $P$ is a bimodule map. Applying this identity to $P(P\mu)=P\mu$ gives the last expression.
\end{proof}

We want to build a frame for  the module $\phimod$. First, we identify the rank one 
operators we need. Recall the notational convention for simple tensors on page \pageref{conventionforsimple}. 
We start with the main computational step.

\begin{lemma} 
\label{lemmatwopointnine}
For $\mu,\nu\in \algFock$, write $n:=|\mu|-|\nu|$. 
For multi-indices $\rho,\sigma$ with $|\rho|=k+n, |\sigma|=k$, we have the following identities:
\begin{equation}
\label{eq:fdup}
W_{e_\rho,c^{-1/2}_{|\sigma|}P{f}_\sigma}
\Phi_\infty(S_{c^{-1/2}_{|\sigma|}P{f}_\sigma}S^*_{e_\rho}S_{\mu}S^*_{\nu}) =\left\{\begin{array}{ll}
W_{e_\rho,{}_A(\underline{P\nu}\,{}_A(\mathfrak{q}\overline{{\nu}}|\overline{\mu})
(\underline{\mu}|e_\rho)_A|P{f}_\sigma)P{f}_\sigma},
& k+n\leq |\mu|,\\ 
\vspace{-2mm}\\
W_{e_\rho,{}_A(P({\nu}(\mu|e_{\underline{\rho}})_Ae_{\overline{\rho}})|P{f}_\sigma)P{f}_\sigma},
& k+n\geq |\mu|.
\end{array}\right.
\end{equation}
\end{lemma}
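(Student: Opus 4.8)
The plan is to compute both sides of \eqref{eq:fdup} directly, reducing everything to the formula \eqref{eq:mu-nu-beta} for $\Phi_\infty$ on products of creation operators and their adjoints, together with the multiplicativity \eqref{eq:Tee-nice} of $\topop$ and the frame identities \eqref{eq:framebehaviour}. The key structural input is that $S_{e_\rho}^*S_\mu S_\nu^*$ must be re-expressed as a word of the form $S_{(\cdot)}S_{(\cdot)}^*$, and this is exactly where the case split $k+n\leq|\mu|$ versus $k+n\geq|\mu|$ enters: since $|\rho|=k+n$, the adjoint $S_{e_\rho}^*$ either only partially contracts $\mu$ (when $k+n\leq|\mu|$, so $\rho$ pairs against an initial segment $\underline{\mu}$ of $\mu$) or fully consumes $\mu$ and then begins contracting $\nu$ from the left (when $k+n\geq|\mu|$, so $\nu=\underline{\nu}\,\overline{\nu}$ with $|\underline{\nu}|=k+n-|\mu|$). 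First I would handle the case $k+n\leq|\mu|$: write $\mu=\underline{\mu}\,\overline{\mu}$ with $|\underline{\mu}|=k+n$, so that $S_{e_\rho}^*S_\mu=(e_\rho|\underline{\mu})_A S_{\overline{\mu}}$ — wait, more precisely $S_{e_\rho}^*S_\mu S_\nu^* = S_{\overline{\mu}(e_\rho|\underline\mu)_A\cdots}$; I would track the coefficients carefully using $S_e^*S_f=(e|f)_A$ and the $A$-bimodule structure, arriving at an element of $\algFock$ whose image in $\phimod$ we then feed into $\Phi_\infty$ against $S_{c_{|\sigma|}^{-1/2}Pf_\sigma}^*$.

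Concretely, for the first case the left-hand side is $W_{e_\rho,\,c_{|\sigma|}^{-1/2}Pf_\sigma}\,\Phi_\infty\!\big(S_{c_{|\sigma|}^{-1/2}Pf_\sigma}S_{e_\rho}^*S_\mu S_\nu^*\big)$; I would compute the argument of $\Phi_\infty$ as a sum of homogeneous terms $S_\alpha S_\beta^*$ with $|\alpha|=|\sigma|=k$ and $|\beta|=|\nu|-(|\mu|-k-n)=k$, apply Lemma \ref{computeinnerprod}, and recognise the resulting $A$-valued coefficient as ${}_A(\underline{P\nu}\,{}_A(\mathfrak{q}\overline{\nu}|\overline{\mu})(\underline{\mu}|e_\rho)_A\,|\,Pf_\sigma)$ after using \eqref{eq:Tee-nice} to split the $\topop$ appearing in \eqref{eq:mu-nu-beta} across the tensor factors and \eqref{eq:framebehaviour} to absorb $c_{|\sigma|}^{-1/2}Pf_\sigma$ into a single slot. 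The appearance of $\mathfrak{q}\overline\nu$ rather than $\mathfrak{q}\nu$ is the combinatorial heart: the portion of $\nu$ that survives the contraction by $\rho$ is $\overline\nu$, and it is precisely that surviving tail which gets hit by $\topop$ in the limit defining $\Phi_\infty$. Then $W_{e_\rho,\,c_{|\sigma|}^{-1/2}Pf_\sigma\,a}=W_{e_\rho,\,Pf_\sigma\,a'}$ for the appropriate $a,a'\in A$ by Lemma \ref{wmunuwm} and centrality of $c_{|\sigma|}$, giving the stated right-hand side.

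For the second case $k+n\geq|\mu|$, the computation is actually cleaner because $S_{e_\rho}^*S_\mu$ fully consumes $\mu$: writing $e_\rho=e_{\underline\rho}\,e_{\overline\rho}$ with $|\underline\rho|=|\mu|$, one gets $S_{e_\rho}^*S_\mu=S_{e_{\overline\rho}}^*(e_{\underline\rho}|\mu)_A$, hence $S_{e_\rho}^*S_\mu S_\nu^* = S_{\overline\rho}^* \cdots S_\nu^*$ assembles into $S_{(\nu\,\cdot)}^*$-type terms, and after pairing with $S_{Pf_\sigma}$ and applying $\Phi_\infty$ we land on the coefficient ${}_A(P(\nu(\mu|e_{\underline\rho})_A e_{\overline\rho})\,|\,Pf_\sigma)$ — here the whole word $\nu(\mu|e_{\underline\rho})_A e_{\overline\rho}$ lies in a single tensor power and $\topop$ applied to it is just $P$ times a central unit by \eqref{eq:trace-one}, which is why only $P$ (not $\mathfrak q$) survives in the final formula. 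I would also need to check the boundary overlap $k+n=|\mu|$ is consistent between the two formulas (it is: $\overline\mu$ and $\underline\nu$ are both empty, $\mathfrak q\overline\nu=\mathfrak q\nu$ versus $P(\nu(\mu|e_\rho)_A)$, reconciled again via Lemma \ref{wmunuwm}). The main obstacle I anticipate is purely bookkeeping: correctly propagating the $A$-valued inner-product coefficients through the non-commutative products $S_{e_\rho}^*S_\mu S_\nu^*$ while respecting left versus right module structures, and making sure the limit $\ell\to\infty$ in \eqref{eq:mu-nu-beta} distributes across the tensor factors in exactly the pattern dictated by \eqref{eq:Tee-nice} so that $\mathfrak{q}$ acts only on $\overline\nu$ in the first case and disappears in the second.
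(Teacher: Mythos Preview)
Your approach is essentially the paper's: in each case rewrite $S_{e_\rho}^*S_\mu S_\nu^*$ as a single $S_\alpha S_\beta^*$, apply \eqref{eq:mu-nu-beta}, and then use the multiplicativity \eqref{eq:Tee-nice} together with Assumption~\ref{ass:two} to identify the coefficient. Two bookkeeping slips would derail the computation as written, however. First, in the case $k+n\leq|\mu|$, after contracting $\rho$ against $\underline\mu$ you arrive at $S_{c^{-1/2}_{|\sigma|}Pf_\sigma(e_\rho|\underline\mu)_A\overline\mu}\,S_\nu^*$, so both $\alpha$ and $\beta$ have degree $|\nu|$, not $k$; your formula $|\beta|=|\nu|-(|\mu|-k-n)=k$ suggests you are imagining a contraction of $\nu$ by $\rho$ that does not occur. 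The splitting $\nu=\underline\nu\,\overline\nu$ with $|\underline\nu|=k$ arises not because $\rho$ reaches into $\nu$, but because \eqref{eq:mu-nu-beta} applies $\topop_{|\nu|}$ to $\nu$, and then \eqref{eq:Tee-nice} gives $\topop_{|\nu|}\nu=\topop_k\underline\nu\otimes\topop\overline\nu$; the factor $c_{|\sigma|}^{-1}=c_k^{-1}$ converts $\topop_k\underline\nu$ into $P\underline\nu$, leaving $\topop\overline\nu$ intact. Second, in the case $k+n\geq|\mu|$, the reason only $P$ (not $\topop$) survives is not \eqref{eq:trace-one} but simply Assumption~\ref{ass:two}: the element $\nu(\mu|e_{\underline\rho})_A e_{\overline\rho}$ has degree exactly $|\sigma|=k$, so $c_{|\sigma|}^{-1}\topop_k=P_k$.
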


\begin{proof} 
We first treat the case $k+n\leq |\mu|$ and compute 
\begin{align*}
W_{e_\rho,c^{-1/2}_{|\sigma|}P{f}_\sigma}
\Phi_\infty(S_{c^{-1/2}_{|\sigma|}P{f}_\sigma}S^*_{e_\rho}S_{\mu}S^*_{\nu}) & =
W_{e_\rho,c^{-1/2}_{|\sigma|}P{f}_\sigma}
\Phi_\infty(S_{c^{-1/2}_{|\sigma|}P{f}_\sigma}(e_\rho|\underline{\mu})_AS_{\overline{\mu}} S_\nu^*)\\
&=W_{e_\rho,P{f}_\sigma}\Phi_{\infty}(S_{c^{-1}_{|\sigma|}Pf_{\sigma}(e_{\rho}|\underline{\mu})_{A}\overline{\mu}}S_{\nu}^{*})\\
&=\lim_{\ell\to\infty} W_{e_\rho,P{f}_\sigma}{}_{A}
(c^{-1}_{|\sigma|}Pf_{\sigma}(e_{\rho}|\underline{\mu})_{A}\overline{\mu} | \mathrm{e}^{-\beta_{\ell}}\nu \mathrm{e}^{\beta_{\ell-|\nu|}})
\quad\textnormal{by Eq. \eqref{eq:mu-nu-beta}}\\
&=W_{e_\rho,P{f}_\sigma}{}_A(P{f}_\sigma(e_\rho|\underline{\mu})_A\overline{\mu}|c^{-1}_{|\sigma|}\mathfrak{q}\nu) \\
&=W_{e_\rho,{}_A(\underline{P\nu}\,{}_A(\mathfrak{q}\overline{{\nu}}|\overline{\mu})
(\underline{\mu}|e_\rho)_A|P{f}_\sigma)P{f}_\sigma}.
\end{align*}
For $k+n\geq |\mu|$ we can do a similar calculation
\begin{align*}
W_{e_\rho,c^{-1/2}_{|\sigma|}P{f}_\sigma}
\Phi_\infty(S_{c^{-1/2}_{|\sigma|}P{f}_\sigma}S^*_{e_\rho}S_{\mu}S^*_{\nu}) 
&= W_{e_\rho,c^{-1/2}_{|\sigma|}P{f}_\sigma}\Phi_\infty(S_{c^{-1/2}_{|\sigma|}P{f}_\sigma}S_{e_{\overline{\rho}}}^*(e_{\underline{\rho}}|\mu)_AS_\nu^*)\\
&=W_{e_\rho,P{f}_\sigma}\Phi_{\infty}(S_{c^{-1}_{|\sigma|}P{f}_\sigma}S^{*}_{\nu(\mu|e_{\underline{\rho}})_{A}e_{\overline{\rho}}})\\
&=\lim_{\ell\to\infty}{}_{A}(c^{-1}_{|\sigma|}P{f}_\sigma| \mathrm{e}^{-\beta_{\ell}}\nu(\mu|e_{\underline{\rho}})_{A}e_{\overline{\rho}}\mathrm{e}^{-\beta_{\ell-|\nu|-|\overline{\rho}|}}) \quad\textnormal{by Eq. \eqref{eq:mu-nu-beta}}\\
&=W_{e_\rho,P{f}_\sigma}{}_A(P{f}_\sigma(e_\rho|\underline{\mu}){}_A(P{f}_\sigma|c^{-1}_{|\sigma|}\mathfrak{q}(\nu (\mu|e_{\underline{\rho}})_Ae_{\overline{\rho}}))\\
&=W_{e_\rho,{}_A(P({\nu}(\mu|e_{\underline{\rho}})_Ae_{\overline{\rho}})|P{f}_\sigma)P{f}_\sigma},
\end{align*}
establishing the desired formula.
\end{proof}

Lemma \ref{lemmatwopointnine} puts us in a position to define the elements of the decomposing frame for the module $\Xi_{A}$.

\begin{lemma}
\label{lem:baby-step1}
For $n\in\Z$ and $k\geq \max\{0,-n\}$ we define 
$$Q_{n,k}:=\sum_{|\rho|-|\sigma|=n,\,|\sigma|=k}
\Theta_{W_{e_\rho,c^{-1/2}_{|\sigma|}Pf_\sigma},W_{e_\rho,c^{-1/2}_{|\sigma|}Pf_\sigma}}.$$
We have the identity
$$
Q_{n,k}W_{\mu,\nu}
=\delta_{|\mu|-|\nu|,n}\left\{\begin{array}{ll} 
W_{\underline{\mu}\,{}_A(\overline{\mu}|\mathfrak{q}\overline{{\nu}}),P\underline{{\nu}}} & k\leq |\nu|\\
W_{\mu,P{\nu}} & k\geq |\nu|\end{array}\right.
$$
where $|\underline{\nu}|=k$ and $|\underline{\mu}|=n+k$. In particular, $Q_{n,k}$ does not depend on the choice of frames.
\end{lemma}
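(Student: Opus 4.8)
The plan is to verify the stated action of $Q_{n,k}$ on a generator $W_{\mu,\nu}$ by a direct computation, using Lemma \ref{lemmatwopointnine} as the core input. First I would note that the Kronecker delta $\delta_{|\mu|-|\nu|,n}$ is forced: each summand $\Theta_{W_{e_\rho,c^{-1/2}_{|\sigma|}Pf_\sigma},W_{e_\rho,c^{-1/2}_{|\sigma|}Pf_\sigma}}$ acts as $W_{e_\rho,c^{-1/2}_{|\sigma|}Pf_\sigma}\,\Phi_\infty(S_{c^{-1/2}_{|\sigma|}Pf_\sigma}^*\,S_{e_\rho}^*\,S_\mu S_\nu^*)$, and by Lemma \ref{computeinnerprod} (homogeneity) this inner product vanishes unless the degree of $S_{c^{-1/2}_{|\sigma|}Pf_\sigma}^*S_{e_\rho}^*S_\mu S_\nu^* = S_{e_\rho}^* S_\mu S_\nu^* S_{c^{-1/2}_{|\sigma|}Pf_\sigma}$ is zero; since $|\rho|-|\sigma|=n$, this degree is $-(k+n)+|\mu|-|\nu|+k = |\mu|-|\nu|-n$, so indeed only $n = |\mu|-|\nu|$ survives. (One must be slightly careful here that $\Theta$ is built from $\langle\cdot,\cdot\rangle = \Phi_\infty(\cdot^*\cdot)$, hence the extra adjoint relative to the formula displayed in Lemma \ref{lemmatwopointnine}; matching conventions with that lemma is the first bookkeeping step.)

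Next, assuming $|\mu|-|\nu|=n$, I would apply Lemma \ref{lemmatwopointnine} with the roles of $k+n = |\rho|$ and $k = |\sigma|$, and recall $n+k = |\rho|$, so the dichotomy ``$k+n \leq |\mu|$'' versus ``$k+n\geq |\mu|$'' in that lemma translates, via $|\mu| = |\nu|+n$, into ``$k\leq|\nu|$'' versus ``$k\geq|\nu|$'', which is exactly the case split in the statement. In the case $k\leq|\nu|$ (equivalently $k+n\leq|\mu|$), Lemma \ref{lemmatwopointnine} gives each summand as $W_{e_\rho,\,{}_A(\underline{P\nu}\,{}_A(\mathfrak{q}\overline{\nu}|\overline{\mu})(\underline{\mu}|e_\rho)_A\,|\,Pf_\sigma)Pf_\sigma}$, with $|\underline{\mu}|=n+k=|\rho|$, $|\overline\mu|=|\mu|-(n+k)$, $|\underline\nu|=k$, $|\overline\nu|=|\nu|-k$. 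Then I would sum over $\sigma$ with $|\sigma|=k$, using the left-frame identity \eqref{eq:framebehaviour} — specifically $\sum_{|\sigma|=k}{}_A(\xi|Pf_\sigma)Pf_\sigma = P\xi$ — to collapse the $\sigma$-sum into $W_{e_\rho,\,P(\,\underline{P\nu}\,{}_A(\mathfrak{q}\overline\nu|\overline\mu)(\underline\mu|e_\rho)_A\,)}$, and since $P$ is a bimodule map this equals $W_{e_\rho,\,P\underline{\nu}\,{}_A(\mathfrak{q}\overline\nu|\overline\mu)(\underline\mu|e_\rho)_A}$ (using $P\underline{P\nu}=P\underline\nu$). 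Finally summing over $\rho$ with $|\rho|=n+k$ and moving the scalar factors through $W_{\cdot,\cdot}$ as left/right module actions — i.e. $W_{e_\rho,\,y(\underline\mu|e_\rho)_A} = W_{\,{}_A(y|\cdot)\text{-type rearrangement}}$ — one uses the right frame $\sum_{|\rho|=n+k}\Theta_{e_\rho,e_\rho}=\mathrm{Id}$ on $E^{\otimes(n+k)}$ to reconstitute $\underline\mu$, yielding $W_{\underline\mu\,{}_A(\overline\mu|\mathfrak{q}\overline\nu),\,P\underline\nu}$ after a transpose of inner-product arguments. The case $k\geq|\nu|$ is handled identically but is simpler: Lemma \ref{lemmatwopointnine} gives $W_{e_\rho,\,{}_A(P(\nu(\mu|e_{\underline\rho})_A e_{\overline\rho})|Pf_\sigma)Pf_\sigma}$; summing over $\sigma$ via \eqref{eq:framebehaviour} gives $W_{e_\rho,\,P(\nu(\mu|e_{\underline\rho})_A e_{\overline\rho})}$, and summing over $\rho$ with the frame relation on $E^{\otimes(k+n)} = E^{\otimes|\mu|}\otimes E^{\otimes(k-|\nu|)}$ reassembles $S_\mu S_\nu^*$ up to the $P$, giving $W_{\mu,P\nu}$.

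Frame-independence of $Q_{n,k}$ is then immediate: the final expressions $W_{\underline\mu\,{}_A(\overline\mu|\mathfrak{q}\overline\nu),P\underline\nu}$ and $W_{\mu,P\nu}$ make no reference to the frames $(e_\rho)$ or $(f_\sigma)$, and since generators $W_{\mu,\nu}$ span a dense subspace of $\Xi_A$ and $Q_{n,k}$ is bounded, the operator is determined by its action on them. I would also remark that self-adjointness and idempotency of $Q_{n,k}$ — needed later for it to be a genuine projection — follow because it is manifestly a sum of positive rank-one operators $\Theta_{w,w}$ with mutually ``orthogonal-enough'' $w$'s; but that is not claimed in this particular lemma, so I would defer it.

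The main obstacle I anticipate is purely organizational rather than conceptual: tracking the several module-structure moves — sliding the central invertible $c_{|\sigma|}^{-1/2}$ and the bimodule map $\mathfrak{q} = \oplus_\ell \mathfrak{q}_\ell$ (and the projection $P$) past the bra/ket slots of $W_{\cdot,\cdot}$, using that $c_\ell$ is central and $\mathfrak{q}_\ell, P_\ell$ are adjointable bimodule morphisms (Lemma \ref{lem:adj-lim}, Lemma \ref{decomposingtopop}), and correctly applying \emph{left} frame relations \eqref{eq:framebehaviour} for the $\sigma$-sum versus \emph{right} frame relations for the $\rho$-sum. The factor $c_{|\sigma|}^{-1/2}$ appears twice (once from each copy of $W_{e_\rho,c^{-1/2}_{|\sigma|}Pf_\sigma}$ in $\Theta$), combining to $c_{|\sigma|}^{-1}$, which is exactly what Lemma \ref{lemmatwopointnine} is built to absorb via $c^{-1}_{|\sigma|}\mathfrak{q}$ and the identity \eqref{eq:trace-one}; keeping that normalization straight is where an error would most plausibly creep in. None of the steps requires an idea beyond what Lemmas \ref{computeinnerprod}--\ref{lemmatwopointnine} already supply.
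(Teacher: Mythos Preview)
Your proposal is correct and follows essentially the same route as the paper's proof: invoke Lemma~\ref{lemmatwopointnine} termwise, collapse the $\sigma$-sum via the left-frame identity \eqref{eq:framebehaviour}, then collapse the $\rho$-sum via the right-frame relation, and read off frame-independence from the final formula. The only point you slightly underestimate is the $k\geq|\nu|$ case, which is not quite ``simpler'': after the $\sigma$-sum the paper uses Lemma~\ref{wmunuwm} to drop $P$ and then the Cuntz--Pimsner relation $\sum_{|\overline\rho|}S_{e_{\overline\rho}}S^*_{e_{\overline\rho}}=1$ in $\O_E$ (not merely a frame identity in $E^{\otimes(k+n)}$) to dispose of the extra $e_{\overline\rho}$ factors before the remaining $\underline\rho$-sum reassembles $\mu$.
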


\begin{proof}
In the interests of avoiding at least one subscript, 
we write $P$ generically for the projection onto $M^{\ox \ell}=P_\ell E^{\ox \ell}$,
and similarly $\mathfrak{q}$ for $\mathfrak{q}_\ell$.
For simplicity we write $r=k+n$ and note that $|\nu|\geq k$ is equivalent to $|\mu|\geq r$ 
for $|\mu|-|\nu|=n$.
\begin{align*}
&\sum_{|\rho|-|\sigma|=n,\,|\rho|=r}
\Theta_{W_{e_\rho,c^{-1/2}_{|\sigma|}P{f}_\sigma},W_{e_\rho,c^{-1/2}_{|\sigma|}P{f}_\sigma}}W_{\mu,\nu}\\
&=\delta_{|\mu|-|\nu|,n}\sum_{|\rho|-|\sigma|=n,\,|\rho|=r}W_{e_\rho,c^{-1/2}_{|\sigma|}P{f}_\sigma}
\Phi_\infty(S_{c^{-1/2}_{|\sigma|}P{f}_\sigma}S^*_{e_\rho}S_{\mu}S^*_{\nu})\\
&=\delta_{|\mu|-|\nu|,n}\sum_{|\rho|-|\sigma|=n,\,|\rho|=r}
\left\{\begin{array}{ll}
W_{e_\rho,{}_A(\underline{P\nu}\,{}_A(\mathfrak{q}\overline{{\nu}}|\overline{\mu})
(\underline{\mu}|e_\rho)_A|P{f}_\sigma)P{f}_\sigma}
& r\leq |\mu| \quad\textnormal{by Eq.\eqref{eq:fdup}}\\ 
W_{e_\rho,{}_A(P({\nu}(\mu|e_{\underline{\rho}})_Ae_{\overline{\rho}})|P{f}_\sigma)P{f}_\sigma}
& r\geq |\mu|\end{array}\right.\\
&=\delta_{|\mu|-|\nu|,n}\sum_{|\rho|=r}
\left\{\begin{array}{ll}
W_{e_\rho(e_\rho|\underline{\mu})_A{}_A(\overline{\mu}|\mathfrak{q}\overline{{\nu}}),\underline{P{\nu}}}
& r\leq |\mu| \quad\textnormal{by Eq. \eqref{eq:framebehaviour}}\\ 
W_{e_\rho, P{\nu}(\mu|e_{\underline{\rho}})_AP{e}_{\overline{\rho}}}
& r\geq |\mu|\end{array}\right.\\
&=\delta_{|\mu|-|\nu|,n}
\left\{\begin{array}{ll} W_{\underline{\mu}{}_A(\overline{\mu}|\mathfrak{q}\overline{\nu}),\underline{P{\nu}}}
& r\leq |\mu|\qquad \qquad\mbox{where}\ \ |\underline{\mu}|=r\\ 
\sum_{|\rho|=r} W_{e_\rho, P{\nu}(\mu|e_{\underline{\rho}})_AP{e}_{\overline{\rho}}}
& r\geq |\mu| \end{array}\right.\\
&=\delta_{|\mu|-|\nu|,n}\left\{\begin{array}{ll}
W_{\underline{\mu}{}_A(\overline{\mu}|\mathfrak{q}\overline{\nu}),\underline{P{\nu}}}
& r\leq |\mu|\qquad \qquad\mbox{where}\ \ |\underline{\mu}|=r\\ 
W_{\mu,P{\nu}}
& r\geq |\mu|\end{array}\right.
\end{align*}
In the last step, we are using $W_{\mu,\nu}=W_{\mu,P\nu}$ 
(see Lemma \ref{wmunuwm}) to find that
\begin{align*}
\sum_{|\rho|=r}W_{e_\rho, P{\nu}(\mu|e_{\underline{\rho}})_AP{e}_{\overline{\rho}}}
&=\sum_{|{\rho}|=r}W_{e_{\rho}, {\nu}(\mu|e_{\underline{\rho}})_A{e}_{\overline{\rho}}}
=\sum_{|\rho|=r}S_{e_\rho}S^*_{e_{\overline{\rho}}}W_{0,{\nu}(\mu|e_{\underline{\rho}})_A}\\
&=\sum_{|\underline{\rho}|=|\mu|}W_{e_{\underline{\rho}}, {\nu}(\mu|e_{\underline{\rho}})_A}
=\sum_{|\underline{\rho}|=|\mu|}W_{e_{\underline{\rho}}(e_{\underline{\rho}}|\mu)_A, {\nu}}
=W_{\mu, {\nu}}
=W_{\mu, P{\nu}}.
\end{align*}
Our computation of $Q_{n,k}W_{\mu,\nu}$ gives a result that 
does not depend on the choice of frame,
therefore $Q_{n,k}$ does not depend on the choice of frame.
\end{proof}

With the collection of rank one operators from Lemma \ref{lem:baby-step1} in hand, we can construct our 
orthogonal decomposition. Recall the definition of $\Psi_n$ from Lemma \ref{wherewefindpsin}.

\begin{prop}
\label{lem:baby-step2}
For $n\in\Z$ and $k\geq \max\{0,-n\}$, the operators $Q_{n,k}$ from Lemma \ref{lem:baby-step1} 
have the strict limit
$$
\lim_{k\to\infty}Q_{n,k}=\Psi_n
$$
In particular, we have
$$
Q_{n,\max\{0,-n\}}+\sum_{k=\max\{0,-n\}}^\infty(Q_{n,k+1}-Q_{n,k})=\Psi_n,
$$
again strictly. Finally, the operators
$$
P_{n,k}:=
\left\{\begin{array}{ll} Q_{n,k}-Q_{n,k-1} & k>\max\{0,-n\}\\ Q_{n,\max\{0,-n\}} & k=\max\{0,-n\}
\end{array}
\right.
$$
define a family of pairwise orthogonal finite rank projections which sum (strictly) to the identity
on $\phimod$. The family of projections $P_{n,k}$ does not depend on the choice of frame.
\end{prop}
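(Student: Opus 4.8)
The plan is to exploit the explicit action of $Q_{n,k}$ on the spanning elements $W_{\mu,\nu}$ computed in Lemma \ref{lem:baby-step1}, together with the density of the $W_{\mu,\nu}$ in $\phimod$. First I would fix $n\in\Z$ and a homogeneous generator $W_{\mu,\nu}$ with $|\mu|-|\nu|=n$. From Lemma \ref{lem:baby-step1}, once $k\geq|\nu|$ we have $Q_{n,k}W_{\mu,\nu}=W_{\mu,P\nu}=W_{\mu,\nu}$ by Lemma \ref{wmunuwm}; so on this dense set $Q_{n,k}$ eventually agrees with $\Psi_n$ (recall $\Psi_n W_{\mu,\nu}=\delta_{|\mu|-|\nu|,n}W_{\mu,\nu}$ from Lemma \ref{wherewefindpsin}). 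To upgrade pointwise-eventual agreement on a dense subspace to strict convergence of the operators, I would check that the $Q_{n,k}$ are uniformly bounded in $k$ — indeed each $Q_{n,k}\leq\Psi_n\leq 1$, which follows because the $W_{e_\rho,c^{-1/2}_{|\sigma|}Pf_\sigma}$ entering $Q_{n,k}$ form a sub-family of a Parseval-type frame for $\Psi_n\phimod$; concretely one computes $(W_{e_\rho,c^{-1/2}_{|\sigma|}Pf_\sigma}|W_{e_{\rho'},c^{-1/2}_{|\sigma'|}Pf_{\sigma'}})_A$ using Lemma \ref{computeinnerprod} and Equations \eqref{eq:trace-one}, \eqref{eq:framebehaviour} and sees the relevant Gram matrix is a projection. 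Uniform boundedness plus convergence on a dense set plus convergence of the adjoints (the $Q_{n,k}$ are self-adjoint) gives strict convergence $Q_{n,k}\to\Psi_n$, which is the first claim; the telescoping identity for $\Psi_n$ is then immediate.

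Next I would turn to the $P_{n,k}$. That each $P_{n,k}$ is self-adjoint is clear since the $Q_{n,k}$ are. To see $P_{n,k}$ is a projection and that distinct $P_{n,k}$ are orthogonal, the key point is the monotonicity $Q_{n,k}\leq Q_{n,k+1}$ as positive operators, equivalently $Q_{n,k}Q_{n,k+1}=Q_{n,k}$. I would prove this again by evaluating both sides on $W_{\mu,\nu}$: applying Lemma \ref{lem:baby-step1} twice and using that $P$ is an idempotent bimodule map together with the multiplicativity \eqref{eq:Tee-nice} of $\mathfrak q$, the composition $Q_{n,k}Q_{n,k+1}W_{\mu,\nu}$ collapses to $Q_{n,k}W_{\mu,\nu}$ in each of the cases $k<|\nu|$, $k=|\nu|$, $k+1\leq|\nu|$ etc. Granted $Q_{n,k}\nearrow\Psi_n$ with $Q_{n,k}Q_{n,k'}=Q_{n,\min\{k,k'\}}$, the differences $P_{n,k}=Q_{n,k}-Q_{n,k-1}$ (with the base case as stated) are automatically pairwise orthogonal projections, they are finite rank because each $Q_{n,k}$ is a finite sum of rank-one operators, and $\sum_{k}P_{n,k}=\Psi_n$ strictly by the telescoping already established; summing over $n$ and using $\sum_n\Psi_n=1$ from Lemma \ref{wherewefindpsin} gives $\sum_{n,k}P_{n,k}=1$ strictly. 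Frame-independence of $P_{n,k}$ is inherited from that of $Q_{n,k}$, proved in Lemma \ref{lem:baby-step1}.

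The main obstacle I anticipate is not any single algebraic identity but the bookkeeping in verifying $Q_{n,k}Q_{n,k+1}=Q_{n,k}$ (equivalently the monotonicity): one must carefully track, in Lemma \ref{lem:baby-step1}'s case split, which of $k\lessgtr|\nu|$ and $k+1\lessgtr|\nu|$ occurs, and confirm that the factors $c^{-1/2}_{|\sigma|}$, the projections $P$, and the operators $\mathfrak q$ recombine correctly — in particular that $\mathfrak q$ applied after a previous application of $P$ does not introduce spurious terms, which is where one needs $P\mathfrak q=\mathfrak q P=\mathfrak q$ and $\mathfrak q$'s multiplicativity. Once monotonicity is in hand, everything else is the standard fact that an increasing net of projections with supremum $\Psi_n$ has orthogonal consecutive differences, so I would not belabour that part.
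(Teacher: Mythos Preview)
Your plan is correct and essentially matches the paper's proof. Two small points of comparison: first, you can skip the Gram-matrix detour for uniform boundedness, since the formula of Lemma~\ref{lem:baby-step1} already shows each $Q_{n,k}$ is idempotent (in both cases the output $W_{\mu',\nu'}$ has $|\nu'|\leq k$, so a second application lands in the case $k\geq|\nu'|$ and returns the same element), whence $\|Q_{n,k}\|\leq 1$ immediately. Second, for the projection property of $P_{n,k}$ the paper takes the mirror-image route to yours---it verifies $P_{n,k+1}^2=P_{n,k+1}$ directly on generators via the formula and then deduces $Q_{n,k}Q_{n,l}=Q_{n,l}$ for $l<k$, rather than proving monotonicity first---but the case-by-case bookkeeping you anticipate (tracking $P\mathfrak q=\mathfrak q$ and the tensor-factorisation of $P$) is exactly what is needed either way.
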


\begin{proof} It follows from Lemma \ref{lem:baby-step1} that each $Q_{n,k}$ is 
bounded, self-adjoint and idempotent, 
and thus $\|Q_{n,k}\|\leq 1$. As above, we write $r=n+k$. Since for $r\geq |\mu|$ we have 
$$
\sum_{|\rho|-|\sigma|=n,\,|\rho|=r}
\Theta_{W_{e_\rho,c^{-1/2}_{|\sigma|}P{f}_\sigma},W_{e_\rho,c^{-1/2}_{|\sigma|}P{f}_\sigma}}W_{\mu,\nu}
=\Psi_nW_{\mu,\nu},
$$
the strict convergence statements follow now because the sequence $Q_{n,k}$ is uniformly bounded 
and the set $\{W_{\mu,\nu}: \mu,\nu\in \algFock\}$ span a dense submodule.

The second statement follows from the first by a telescoping argument. We are left with the third
statement, and we begin by showing that the $P_{n,k}$ are in fact projections. Since $P_{n,k}$ is a difference
of self-adjoints, $P_{n,k}$ is self-adjoint.
As mentioned above, for $P_{n,\max\{0,-n\}}=Q_{n,\max\{0,-n\}}$ it follows directly 
from Lemma \ref{lem:baby-step1} that $P_{n,\max\{0,-n\}}$ is idempotent. We now turn to the generic case $k>\max\{0,-n\}$.

To reduce the number of subscripts, we drop the subscript on $\topop$. 
Using the computations in Lemma \ref{lem:baby-step1} and writing $r=n+k$, we have
\begin{equation}
P_{n,k+1}W_{\mu,\nu}=\delta_{|\mu|-|\nu|,n}\left\{\begin{array}{ll}
W_{\underline{\mu}_{r+1}{}_A(\overline{\mu}^{r+1}|\topop\overline{{\nu}}^{r+1}),\underline{P{\nu}}_{r+1}}
-W_{\underline{\mu}_{r}{}_A(\overline{\mu}^{r}|\topop\overline{{\nu}}^{r}),\underline{P{\nu}}_r}
& |\mu|\geq r+1\\
0 & |\mu|\leq r
\end{array}\right.
\label{eq:pee-en-ar}
\end{equation}
where $\underline{\mu}_r$ is the initial segment of length $r$, and $\overline{\mu}^r$ has length
$|\mu|-r$. Now this computation shows that
$$
P_{n,k+1}W_{\underline{\mu}_{r}{}_A(\overline{\mu}^{r}|\topop\overline{{\nu}}^{r}),\underline{P{\nu}}_r}
=0
$$
and that with $\mu=\underline{\mu}_r\mu_{r+1}\overline{\mu}^{r+1}$
\begin{align}
P_{n,k+1}
W_{\underline{\mu}_{r+1}{}_A(\overline{\mu}^{r+1}|\topop\overline{{\nu}}^{r+1}),\underline{P{\nu}}_{r+1}}
&=W_{\underline{\mu}_{r+1}{}_A(\overline{\mu}^{r+1}|\topop\overline{{\nu}}^{r+1}),\underline{P{\nu}}_{r+1}}
-W_{\underline{\mu}_r{}_A(\mu_{r+1}{}_A(\overline{\mu}^{r+1}|\topop\overline{{\nu}}^{r+1})|\tilde{\nu}_{r+1}),\underline{P{\nu}}_r}\nonumber\\
&=W_{\underline{\mu}_{r+1}{}_A(\overline{\mu}^{r+1}|\topop\overline{{\nu}}^{r+1}),\underline{P{\nu}}_{r+1}}
-W_{\underline{\mu}_r{}_A(\overline{\mu}^{r}|\topop\overline{{\nu}}^{r}),\underline{P{\nu}}_r}
=P_{n,k+1}W_{\mu,\nu}
\label{eq:pee-skwared}
\end{align}
whence $P_{n,k+1}^2=P_{n,k+1}$. The projection property of $P_{n,k}$ implies, 
by a standard algebraic computation, that $Q_{n,k}Q_{n,k-1}=Q_{n,k-1}Q_{n,k}=Q_{n,k-1}$, 
and by induction for $l<k$ $Q_{n,k}Q_{n,l}=Q_{n,l}$. 
The pairwise orthogonality of the $P_{n,k}$ is now immediate.
\end{proof}

\subsection{Examples of the orthogonal decomposition}
\label{subsec:dec-exam}

We will in this subsection compute some examples of the orthogonal decomposition defined  
from the projections in Lemma \ref{lem:baby-step2}. First we consider Cuntz-Krieger algebras.

\begin{lemma}
\label{descirbingdeom}
Let $\pmb{A}$ denote an $N\times N$-matrix of $0$'s and $1$'s, $\mathcal{R}_{\pmb{A}}$ the associated 
groupoid as in Equation \eqref{thegroupoidra} decomposed as in Equation \eqref{thegroupoidradecomp}.
Under the isomorphism $\Xi_{C(\Omega_{\pmb{A}})}\cong L^{2}(\mathcal{R}_{\pmb{A}})_{C(\Omega_{\pmb{A}})}$, 
$$
C(\mathcal{R}^{n,k}_{\pmb{A}})=P_{n,k}\Xi_{C(\Omega_{\pmb{A}})},
$$
as Hilbert $C^*$-modules.
\end{lemma}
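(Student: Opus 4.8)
The goal is to identify, under the isomorphism $\Xi_{C(\Omega_{\pmb A})}\cong L^2(\mathcal R_{\pmb A})_{C(\Omega_{\pmb A})}$ of Lemma~\ref{psiinftyandunit}, the submodule $P_{n,k}\Xi_{C(\Omega_{\pmb A})}$ with the module $C(\mathcal R^{n,k}_{\pmb A})$ of functions supported on the compact open set $\mathcal R^{n,k}_{\pmb A}$ from Equation~\eqref{thegroupoidradecomp}. I would begin by recording the simplifications available in the Cuntz--Krieger case: $\beta_\ell=0$, hence $\mathrm e^{\beta_\ell}=1$, $\topop_\ell=\mathrm{Id}_{E^{\ox\ell}}$, $P_\ell=\mathrm{Id}$, and $c_\ell=1$ (Example~\ref{betakcomp} and Remark~\ref{centralityandasstwo}). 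With these substitutions, the generic formula for $Q_{n,k}W_{\mu,\nu}$ in Lemma~\ref{lem:baby-step1} collapses to
\[
Q_{n,k}W_{\mu,\nu}=\delta_{|\mu|-|\nu|,n}\begin{cases}W_{\underline{\mu}\,{}_A(\overline\mu|\overline\nu),\,\underline\nu}&k\leq|\nu|\\ W_{\mu,\nu}&k\geq|\nu|\end{cases}
\]
with $|\underline\nu|=k$, $|\underline\mu|=n+k$. Translating $W_{\mu,\nu}=[S_\mu S_\nu^*]$ into the groupoid picture via Lemma~\ref{psiinftyandunit}: $f_{\mu,\nu}$ is supported on $\{(x,n,y):\sigma^{|\mu|}(x)=\sigma^{|\nu|}(y)\}$ with value $\mu(x)\nu^*(y)$, so $W_{\mu,\nu}$ corresponds to the function $f_{\mu,\nu}$ on $\mathcal R_{\pmb A}$.

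Next I would unwind what $Q_{n,k}$ does to such a function. The case distinction $k\le|\nu|$ versus $k\ge|\nu|$ is exactly the statement that $Q_{n,k}$ is the operator of (pointwise) multiplication by the indicator function $\chi_{\mathcal S^{n,k}_{\pmb A}}$ of the set $\mathcal S^{n,k}_{\pmb A}:=\{(x,n,y)\in\mathcal R_{\pmb A}:\sigma^{n+k}(x)=\sigma^k(y)\}=\{\kappa_{\pmb A}\le k\}\cap c^{-1}(n)$, where $\kappa_{\pmb A}$ is the function defined in Equation~\eqref{kappaadef}. Indeed: when $k\ge|\nu|$ (so also $n+k\ge|\mu|$), the whole support of $f_{\mu,\nu}$ already lies in $\mathcal S^{n,k}_{\pmb A}$, which is why $Q_{n,k}W_{\mu,\nu}=W_{\mu,\nu}$; when $k<|\nu|$, the formula $W_{\underline\mu\,{}_A(\overline\mu|\overline\nu),\underline\nu}$ produces precisely the function obtained from $f_{\mu,\nu}$ by keeping only the part of its support where $\sigma^{n+k}(x)=\sigma^k(y)$ already holds (the inner product ${}_A(\overline\mu|\overline\nu)$ being the transfer-operator computation that implements this restriction), and kills the rest. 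I would verify this by a direct computation with cylinder functions, using the concrete description of $\kappa_{\pmb A}$ and the fact that on $\mathcal R_{\pmb A}$ the condition ``$\kappa_{\pmb A}(x,n,y)\le k$'' is clopen. Consequently $Q_{n,k}$ is multiplication by $\chi_{c^{-1}(n)\cap\kappa_{\pmb A}^{-1}(\{0,1,\dots,k\})}$, and by the defining formula of Proposition~\ref{lem:baby-step2}, $P_{n,k}=Q_{n,k}-Q_{n,k-1}$ is multiplication by $\chi_{c^{-1}(n)\cap\kappa_{\pmb A}^{-1}(k)}=\chi_{\mathcal R^{n,k}_{\pmb A}}$ (using $\mathcal R^{n,k}_{\pmb A}=c^{-1}(n)\cap\kappa_{\pmb A}^{-1}(k)$, noted in Subsection~\ref{briefreviewck}); similarly for $k=\max\{0,-n\}$ with $Q_{n,k-1}$ read as zero.

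Finally, once $P_{n,k}$ is identified as pointwise multiplication by $\chi_{\mathcal R^{n,k}_{\pmb A}}$ on $L^2(\mathcal R_{\pmb A})_{C(\Omega_{\pmb A})}$, the image $P_{n,k}\Xi_{C(\Omega_{\pmb A})}$ is the closure of $\{g\in C_c(\mathcal R_{\pmb A}):\supp g\subseteq\mathcal R^{n,k}_{\pmb A}\}=C(\mathcal R^{n,k}_{\pmb A})$ (using that $\mathcal R^{n,k}_{\pmb A}$ is compact open, so $C(\mathcal R^{n,k}_{\pmb A})\subseteq C_c(\mathcal R_{\pmb A})$ and is already complete as a Hilbert module over $C(\Omega_{\pmb A})$); the inner product is the restriction of the one on $\Xi_{C(\Omega_{\pmb A})}$, which matches the canonical one on $C(\mathcal R^{n,k}_{\pmb A})$ for support reasons, as already observed in Subsection~\ref{briefreviewck}. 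This gives the asserted equality of Hilbert $C^*$-modules. \emph{The main obstacle} I anticipate is the bookkeeping in the $k<|\nu|$ case: one must check carefully that the operator $W_{\mu,\nu}\mapsto W_{\underline\mu\,{}_A(\overline\mu|\overline\nu),\underline\nu}$, expressed through the transfer operator $\mathfrak L$ as in the proof of Lemma~\ref{psiinftyandunit}, really coincides on the nose with multiplication by $\chi_{\{\kappa_{\pmb A}\le k\}}$ — in particular that the pairing ${}_A(\overline\mu|\overline\nu)$ evaluated along the constraint $\sigma^{n+k}(x)=\sigma^k(y)$ reproduces the value $\mu(x)\nu^*(y)$ exactly where $\kappa_{\pmb A}(x,n,y)\le k$ and vanishes otherwise. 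This is essentially the content of \cite[Section~3.4]{GM} re-derived in the Cuntz--Pimsner language, so I would lean on that reference for the explicit cylinder-set computation.
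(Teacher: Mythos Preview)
Your approach is correct and, in fact, slightly more streamlined than the paper's. Both proofs begin from the same orthogonal decomposition of $L^2(\mathcal R_{\pmb A})_{C(\Omega_{\pmb A})}$ into the modules $C(\mathcal R^{n,k}_{\pmb A})$, but they diverge in how they identify these pieces with the images of the projections $P_{n,k}$.

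The paper goes back to the explicit definition of $Q_{n,k}$ as a sum of rank-one operators built from the frame $W_{e_\rho,1^{\otimes j}}$, evaluates $Q_{n,k}f(x,n,y)$ as a concrete sum over preimages of $\sigma$, and then argues by a case-by-case analysis on $P_{n,k+1}f = (Q_{n,k+1}-Q_{n,k})f$ that $P_{n,k}f = f$ whenever $f\in C(\mathcal R^{n,k}_{\pmb A})$. You instead invoke the simplified formula for $Q_{n,k}W_{\mu,\nu}$ from Lemma~\ref{lem:baby-step1} (after substituting $\topop_\ell=\mathrm{Id}$, $P_\ell=\mathrm{Id}$), translate that formula into the groupoid picture, and recognise $Q_{n,k}$ directly as multiplication by the indicator $\chi_{\mathcal S^{n,k}_{\pmb A}}$ of the clopen set $\{\kappa_{\pmb A}\le k\}\cap c^{-1}(n)$. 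The identification of $P_{n,k}$ with multiplication by $\chi_{\mathcal R^{n,k}_{\pmb A}}$ then drops out immediately from the telescoping $P_{n,k}=Q_{n,k}-Q_{n,k-1}$, with no further case analysis needed.

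Your route is the cleaner one: it exploits work already done in Lemma~\ref{lem:baby-step1} rather than unwinding the rank-one sum again, and it yields a sharper statement (that $Q_{n,k}$ itself is a multiplication operator, not just that $P_{n,k}$ acts correctly on its target submodule). One small caveat: the parenthetical remark that ``${}_A(\overline\mu|\overline\nu)$ is the transfer-operator computation that implements this restriction'' is not quite right --- in the Cuntz--Krieger model the \emph{left} inner product on $E^{\otimes m}\cong C(\Omega_{\pmb A})$ is simply pointwise multiplication $\mu\overline{\nu}$ (the transfer operator $\mathfrak L$ enters only through the \emph{right} inner product). This is exactly what makes the check $f_{\underline\mu\,{}_A(\overline\mu|\overline\nu),\,\underline\nu} = \chi_{\mathcal S^{n,k}_{\pmb A}}\cdot f_{\mu,\nu}$ straightforward: under the constraint $\sigma^{n+k}(x)=\sigma^k(y)$ the factors reassemble into $\mu(x)\nu^*(y)$ on the nose. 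With that clarification, the bookkeeping you anticipate as the main obstacle is in fact entirely elementary.
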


\begin{proof} 
For support reasons, 
$L^2(\mathcal{R}_{\pmb{A}})_{C(\Omega_{\pmb{A}})}=\bigoplus_{n,k} C(\mathcal{R}_{\pmb{A}}^{n,k})$ is an 
orthogonal decomposition. Therefore, the theorem follows if we can prove that $P_{n,k}f=f$ for 
$f\in C(\mathcal{R}^{n,k}_{\pmb{A}})$.

Let $1^{\otimes j}\in E^{\otimes j}$ correspond to the constant function $1$ under 
$E^{\otimes j}\cong C(\Omega_{\pmb{A}})$, which is a left frame by Example \ref{betakcomp}.
To shorten notation, we write $W_{\rho,j}=W_{e_\rho,1^{\otimes j}}$ -- these are the elements 
appearing in $Q_{n,k}$ (see Lemma \ref{lem:baby-step1}, cf. Example \ref{betakcomp}). 
We can identify $W_{\rho,j}$ with functions on $\mathcal{R}_{\pmb{A}}$ given by
$$W_{\rho,j}(x,n,y)=\begin{cases}
\chi_\rho(x), \;&\mbox{if}\; n=|\rho|-j\;\mbox{and} \; \sigma^{|\rho|}(x)=\sigma^{j}(y)\\
0, \;&\mbox{otherwise}. \end{cases}
$$

For $f\in C(\mathcal{R}^{n,k}_{\pmb{A}})\subseteq C_c(\mathcal{R}_{\pmb{A}})$, we compute that
$$\langle W_{\rho,j},f\rangle_{L^2(\mathcal{R}_{\pmb{A}})}(y)=
\sum_{(z,n,y)\in \mathcal{R}_{\pmb{A}}} W_{\rho,j}(z,n,y)f(z,n,y)=\sum_{\sigma^j(y)=
\sigma^{|\rho|}(z)} \chi_\rho(z)f(z,n,y).$$
This can be combined into 
\begin{align*}
Q_{n,k} f(x,n,y)&=
\sum_{|\rho|-j=n,\,|\rho|=n+k}W_{\rho,j}(x,n,y)\langle W_{\rho,j},f\rangle_{L^2(\mathcal{R}_{\pmb{A}})}(y)\\
&=\begin{cases}
\sum_{|\rho|-j=n,\,|\rho|=n+k}\sum_{\sigma^j(y)=\sigma^{|\rho|}(z)} \chi_\rho(x)\chi_\rho(z)f(z,n,y),\; &\mbox{for}\; \sigma^j(y)=\sigma^{n+k}(x)\\
0, &\mbox{otherwise} \end{cases}\\
&=\begin{cases}
\sum_{|\rho|-j=n,\,|\rho|=n+k}\sum_{\sigma^k(y)=\sigma^{n+k}(z)} \chi_\rho(x)\chi_\rho(z)f(z,n,y),\; &\mbox{for}\; \sigma^j(y)=\sigma^{n+k}(x)\\
0, &\mbox{otherwise} \end{cases}.
\end{align*}
In the last identity we used the fact that on the support of $f$, $\kappa_{\pmb{A}}(z,n,y)=k$. If $k=0$, 
then $n\geq 0$ and $j=0$. The injectivity of $\sigma$ on $U_j$ implies that 
$$
Q_{n,0} f(x,n,y)=P_{n,0}f(x,n,y)=f(x,n,y).
$$

When considering $k>0$, we write
\begin{align}
\label{writingoutpnr}
P_{n,k+1}f(x,n,y)=&Q_{n,k+1}f(x,n,y)-Q_{n,k}f(x,n,y)\\
\nonumber
=&\sum_{|\rho|-j=n,\,|\rho|=n+k+1}\sum_{\sigma^k(y)=
\sigma^{n+k}(z)} \chi_\rho(x)\chi_\rho(z)f(z,n,y)\delta_{\sigma^{j+1}(y)=\sigma^{n+k+1}(x)}\\
\nonumber
&\quad-\sum_{|\rho|-j=n,\,|\rho|=n+k}\sum_{\sigma^k(y)=
\sigma^{n+k}(z)} \chi_\rho(x)\chi_\rho(z)f(z,n,y)\delta_{\sigma^{j}(y)=\sigma^{n+k}(x)}.
\end{align}
Again using the injectivity of $\sigma$ on $U_j$, $P_{n,k+1}f(x,n,y)=f(x,n,y)$ follows from 
Equation \eqref{writingoutpnr} using a case-by-case analysis.
\end{proof}

We turn to the case of the Cuntz algebra. The case of a general graph $C^*$-algebra 
is combinatorially complicated, especially in light of the discussion in Example \ref{discongrap}.

\begin{lemma}
\label{decomposinl2on}
The GNS-representation $L^2(O_N)$ of the Cuntz algebra $O_N$ associated 
with the KMS-state coincides with $\Xi_\C$ defined from $O_N=O_{\C^N}$. 
Moreover, $L^2(O_N)$ can be decomposed into orthogonal 
finite-dimensional subspaces given by
$$
\mathcal{H}_{n,k}=P_{n,k}\Xi_\C=\begin{cases}
\textnormal{span}\{N W_{\mu i,\nu j}-W_{\mu,\nu}\delta_{i,j}: \; |\nu|=k-1,\, |\mu|-|\nu|=n,\,i,\,j=1,\dots,N\}, &n+k,k>0\\
\textnormal{span}\{ W_{\mu ,\emptyset}: \; |\mu|=n\},&k=0\\
\textnormal{span}\{ W_{\emptyset ,\nu}: \; |\nu|=-n\},&n+k=0
\end{cases}\!\!\!.
$$
\end{lemma}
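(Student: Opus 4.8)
The statement has two parts: first, identifying $L^2(O_N)$ (the GNS module for the KMS state) with $\Xi_\C$; second, computing the subspaces $\mathcal H_{n,k}=P_{n,k}\Xi_\C$. For the first part, I would invoke the computation \eqref{KMSCuntz}, $\Phi_\infty(S_\mu S_\nu^*)=\delta_{\mu,\nu}N^{-|\mu|}$, which shows that the inner product $(S_1|S_2)_\C=\Phi_\infty(S_1^*S_2)$ on $O_N$ coincides with the GNS inner product coming from the KMS state; since $A=\C$ the module $\Xi_\C$ is a Hilbert space, and the identification is then immediate. This is the routine part.

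For the decomposition, the plan is to specialize the general results of Section \ref{subsec:chop-chop} to $A=\C$, $E=\C^N$. Here $\beta_\ell=\log N^\ell$ is scalar, hence trivially central for the bimodule structure, so by Remark \ref{centralityandasstwo} we have $P_\ell={\rm Id}_{E^{\ox\ell}}$, $\topop_\ell={\rm Id}$, and $c_\ell=N^{-\ell}$. Thus $P{f}_\sigma=f_\sigma$ and $c^{-1/2}_{|\sigma|}=N^{|\sigma|/2}$. The standard orthonormal basis $\{e_1,\dots,e_N\}$ of $\C^N$ serves as both a left and right frame. Feeding this into Lemma \ref{lem:baby-step1}, the operator $Q_{n,k}$ becomes $\sum_{|\rho|=n+k,|\sigma|=k}N^{k}\Theta_{W_{e_\rho,e_\sigma},W_{e_\rho,e_\sigma}}$ (the $N^k$ from $c^{-1/2}_{|\sigma|}c^{-1/2}_{|\sigma|}$), and its action on $W_{\mu,\nu}$ with $|\mu|-|\nu|=n$ is $W_{\mu,\nu}$ when $k\ge|\nu|$ and $N^{\langle\overline\mu,\overline\nu\rangle}W_{\underline\mu,\underline\nu}$-type expressions when $k<|\nu|$ — concretely, since $\topop={\rm Id}$, $Q_{n,k}W_{\mu,\nu}=\delta_{|\mu|-|\nu|,n}\,W_{\underline\mu\,{}_\C(\overline\mu|\overline\nu),\underline\nu}$ for $k\le|\nu|$ with $|\underline\nu|=k$.

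Next, I would take the telescoping differences $P_{n,k}=Q_{n,k}-Q_{n,k-1}$ from Proposition \ref{lem:baby-step2}. For $k=0$ (so $n\ge0$), $Q_{n,0}$ fixes $W_{\mu,\emptyset}$ with $|\mu|=n$ and kills everything with $|\nu|>0$, giving the claimed $\mathcal H_{n,0}=\mathrm{span}\{W_{\mu,\emptyset}:|\mu|=n\}$; the case $n+k=0$ is the adjoint/mirror statement with $W_{\emptyset,\nu}$, $|\nu|=-n$. For the generic case $n+k>0$, $k>0$: the range of $P_{n,k}=Q_{n,k}-Q_{n,k-1}$ is spanned by vectors $Q_{n,k}W_{\mu,\nu}-Q_{n,k-1}W_{\mu,\nu}$; taking $|\nu|=k$ and using the formulas above, $Q_{n,k}W_{\mu,\nu}=W_{\mu,\nu}$ while $Q_{n,k-1}W_{\mu,\nu}=W_{\underline\mu\,{}_\C(\mu_{|\mu|}|\nu_{|\nu|}),\underline\nu}$ where $\underline\mu,\underline\nu$ drop the last letter. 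Writing $\mu=\mu' i$, $\nu=\nu' j$ with $|\nu'|=k-1$, the inner product ${}_\C(e_i|e_j)=\delta_{ij}$ and a normalization factor of $N$ (tracking the $c^{-1/2}$ constants carefully) yields exactly $N W_{\mu' i,\nu' j}-W_{\mu',\nu'}\delta_{ij}$. I would then check these span $P_{n,k}\Xi_\C$ by noting that the $W_{\mu,\nu}$ with $|\mu|-|\nu|=n$, $|\nu|\ge k$ span a dense (here: all, by finite-dimensionality in each degree) submodule and that $P_{n,k}$ annihilates $W_{\mu,\nu}$ with $|\nu|<k$ (since $Q_{n,k}=Q_{n,k-1}=\Psi_n$ there) — so only the leading-degree-$k$ vectors contribute, and the formula is as stated.

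\textbf{Main obstacle.} The conceptual content is light once Section \ref{subsec:chop-chop} is in hand; the real work is bookkeeping the powers of $N$ arising from the $c^{-1/2}_{|\sigma|}$ normalizations in $Q_{n,k}$ and in Lemma \ref{lemmatwopointnine}, together with keeping straight which tensor legs survive when forming $Q_{n,k}-Q_{n,k-1}$. I expect the trickiest point to be verifying that the listed vectors $NW_{\mu i,\nu j}-W_{\mu,\nu}\delta_{ij}$ are precisely the image of $P_{n,k}$ (and not a proper subspace or superset) — this requires showing both that $P_{n,k}$ applied to a general $W_{\alpha,\beta}$ with $|\beta|\ge k$ lands in their span, and that these vectors are themselves fixed by $P_{n,k}$ (equivalently, using \eqref{eq:pee-skwared} specialized to the scalar case). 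A clean way to close this is a dimension count: one checks $\dim\mathcal H_{n,k}$ agrees with $\rank P_{n,k}=\Phi_\infty$-trace of $P_{n,k}$, but since that may circularly re-invoke the same combinatorics, I would instead argue directly from the explicit action formula for $P_{n,k}$ established in Proposition \ref{lem:baby-step2}.
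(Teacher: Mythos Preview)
Your approach is essentially the same as the paper's: specialize Lemma \ref{lem:baby-step1} and Proposition \ref{lem:baby-step2} to $A=\C$, $E=\C^N$, compute $Q_{n,k}W_{\mu,\nu}$ explicitly, and take telescoping differences. One slip to fix: $\topop_\ell$ is not the identity but $N^{-\ell}\,\mathrm{Id}$ (since $\mathrm{e}^{\beta_n}=N^n$), so the formula for $k\le|\nu|$ carries the factor $N^{-(|\nu|-k)}=N^{r-|\mu|}$; this is exactly the missing power of $N$ you flagged, and once inserted the difference $Q_{n,k}-Q_{n,k-1}$ on $W_{\mu' i,\nu' j}$ with $|\nu' j|=k$ gives $N^{-1}(NW_{\mu' i,\nu' j}-\delta_{ij}W_{\mu',\nu'})$ on the nose.

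The paper streamlines one step you leave open: rather than restricting to $|\nu|=k$ and then separately arguing the resulting vectors span the range of $P_{n,k}$, the paper computes $P_{n,k+1}W_{\mu,\nu}$ for \emph{all} $\mu,\nu$ with $|\nu|\ge k$ and observes directly that each such image is (a scalar multiple of) some $NW_{\alpha i,\beta j}-\delta_{ij}W_{\alpha,\beta}$ with $|\beta|=k-1$. This avoids both the dimension count and the fixed-point check via \eqref{eq:pee-skwared}, though your proposed route through either of those would also close the argument.
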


\begin{proof}
We write $r=n+k$. It follows from construction that 
$$
Q_{n,k}W_{\mu,\nu}=
\delta_{|\mu|-|\nu|,n}
\left\{\begin{array}{ll} 
W_{\underline{\mu},\underline{{\nu}}}\delta_{\overline{\mu},\overline{\nu}} N^{r-|\mu|} 
& r\leq |\mu|\\
W_{\mu,\nu} & r\geq |\mu|\end{array}\right..
$$
Here we use the convention $|\underline{\mu}|=r=n+k$ as in 
Lemma \ref{lem:baby-step1}. From this, it follows that the 
space where $(c,\kappa)=(n,0)$ and $(c,\kappa)=(n,-n)$ is exactly $\mathcal{H}_{n,0}$ 
and $\mathcal{H}_{n,-n}$, respectively. 
We also have the identity
\begin{align*}
P_{n,k+1}W_{\mu,\nu}&=Q_{n,k+1}W_{\mu,\nu}-Q_{n,k}W_{\mu,\nu}\\
&=\delta_{|\mu|-|\nu|,n}\left\{\begin{array}{ll} 
N^{r-|\mu|} (NW_{\underline{\mu}_{r+1},\underline{{\nu}}_{r+1}}\delta_{\overline{\mu}^{r+1},\overline{\nu}^{r+1}}-W_{\underline{\mu}_{r},\underline{{\nu}}_{r}}\delta_{\overline{\mu}^{r},\overline{\nu}^{r}}) & r\leq |\mu|\\
W_{\mu,\nu} & r\geq |\mu|\end{array}\right.
\end{align*}
From this computation, we conclude that the space 
where $(c,\kappa)=(n,k)$ is exactly $\mathcal{H}_{n,k}$ for $r, k>0$.
\end{proof}

\subsection{The depth-kore operator and the unbounded Kasparov module}
\label{subsec:kas-mod}

We fix the decomposition 
\begin{equation}
\label{phimoddec}
\phimod=\bigoplus_{n\in\Z}\bigoplus_{k\geq\max\{0,-n\}}P_{n,k}\phimod
\end{equation}
established in the last section.
We consider the $A$-linear operators defined on the algebraic direct sum 
$\oplus_{k\geq \max\{0,-n\}}^{\textnormal{alg}}P_{n,k}\phimod\subseteq \phimod$ by the formulae
$$
\kappa_0:=\sum_{n,k} kP_{n,k}\quad\mbox{and}\quad c_0:=\sum_{n} n\Psi_{n}.
$$
Both $\kappa_0$ and $c_0$ are closable. We define $\kappa$ and $c$ as the closures of 
$\kappa_0$ and $c_0$, respectively. We call $\kappa$ the \emph{depth-kore operator}. 
The following Proposition is immediate from the construction.

\begin{prop}
\label{propertiesofck}
The operators $c$ and $\kappa$ are self-adjoint and regular operators on $\phimod$ such that 
$c+\kappa\geq 0$ on $\Dom(c)\cap \Dom(\kappa)$. They commute on the common core 
$\Dom(c\kappa)=\Dom(\kappa c)$.
\end{prop}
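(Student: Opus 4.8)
The plan is to exploit the fact---built into the construction of Section \ref{subsec:chop-chop}---that both $c$ and $\kappa$ are \emph{diagonal} operators with real scalar entries for the orthogonal decomposition \eqref{phimoddec} of $\phimod$ into complemented (indeed finitely generated projective) submodules. For such operators the three assertions are essentially bookkeeping. Concretely, I would proceed in four steps: (i) identify the closures $c,\kappa$ explicitly on their maximal diagonal domains; (ii) obtain self-adjointness and regularity from a surjectivity criterion; (iii) read off $c+\kappa\ge 0$ from the inequality $n+k\ge 0$; and (iv) deduce commutation on the algebraic core, equivalently from commuting resolvents. First I would record that, since by Proposition \ref{lem:baby-step2} the $P_{n,k}$ are pairwise orthogonal and sum strictly to $1$, the algebraic direct sum $\mathcal{D}_0:=\bigoplus^{\textnormal{alg}}_{n,k}P_{n,k}\phimod$ is dense, and that the closures of $c_0$ and $\kappa_0$ are the diagonal operators with $\Dom(c)=\{x:\ \textstyle\sum_{n,k}nP_{n,k}x\ \text{converges}\}$, $cx=\sum_{n,k}nP_{n,k}x$, and likewise for $\kappa$ with $n$ replaced by $k$; both inclusions in this identification use only that the $P_{n,k}$ are complemented and sum strictly to the identity.

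For self-adjointness and regularity I would invoke the standard criterion (see \cite[Ch.~9--10]{lancesbook}) that a densely defined symmetric operator $T$ on a Hilbert $C^*$-module with $T\pm i$ surjective is self-adjoint and regular. Surjectivity of $c\pm i$ is explicit: given $y=\sum_{n,k}P_{n,k}y$ (strict sum), put $x:=\sum_{n,k}(n\pm i)^{-1}P_{n,k}y$. Because the $P_{n,k}$ are mutually orthogonal and $|(n\pm i)^{-1}|\le 1$ and $|n(n\pm i)^{-1}|\le1$, the partial sums of $x$ and of $\sum_{n,k}n(n\pm i)^{-1}P_{n,k}y$ are dominated in the $A$-valued inner product by those of $\sum_{n,k}P_{n,k}y$, hence are Cauchy; so $x\in\Dom(c)$ and $(c\pm i)x=y$. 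The identical computation with $k$ in place of $n$ (using $k\ge 0$, so that $k+1$ is invertible) gives the corresponding statement for $\kappa$.

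Positivity of $c+\kappa$ I would get from the constraint $k\ge\max\{0,-n\}$, which forces $n+k\ge 0$ on every summand: for $x\in\mathcal{D}_0$, orthogonality of the $P_{n,k}$ gives $\langle(c+\kappa)x,x\rangle=\sum_{n,k}(n+k)\langle P_{n,k}x,P_{n,k}x\rangle$, a finite sum of positive elements of $A$; for arbitrary $x\in\Dom(c)\cap\Dom(\kappa)$ one passes to the limit over finite index sets and uses continuity of the inner product along the defining nets, so $\langle(c+\kappa)x,x\rangle\ge 0$ throughout, i.e.\ $c+\kappa\ge 0$ on $\Dom(c)\cap\Dom(\kappa)$. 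For commutation, on $\mathcal{D}_0$ one simply computes $c\kappa x=\sum_{n,k}nk\,P_{n,k}x=\kappa c\,x$, and $\mathcal{D}_0$ is a core for $c$, for $\kappa$ and for the closures of $c_0\kappa_0=\kappa_0c_0$, so it is the common core referred to in the statement; equivalently, the resolvents $(c\pm i)^{-1}=\sum_{n,k}(n\pm i)^{-1}P_{n,k}$ and $(\kappa+1)^{-1}=\sum_{n,k}(k+1)^{-1}P_{n,k}$ are both diagonal and hence commute, so $c$ and $\kappa$ commute strongly.

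The only genuinely substantive point is step (i): one must check carefully that the naive diagonal formula really describes the closure of $c_0$ (and not a proper restriction of it), and that $\mathcal{D}_0$ is a core for the operators involved. This is exactly where the structural output of Section \ref{subsec:chop-chop} is used---that \eqref{phimoddec} is an \emph{orthogonal} decomposition into \emph{complemented} submodules summing \emph{strictly} to the identity. Once that is granted, everything above is routine, which is why the proposition can be regarded as immediate from the construction.
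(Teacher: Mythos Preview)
Your proposal is correct and is precisely the elaboration the paper has in mind: the text states only that the proposition ``is immediate from the construction,'' and your argument---diagonal operators on the orthogonal decomposition \eqref{phimoddec}, self-adjointness and regularity via surjectivity of $c\pm i$ and $\kappa\pm i$ (Lance), positivity from $n+k\ge 0$, and commutation on the algebraic core $\mathcal{D}_0$---is exactly the routine verification that makes this ``immediate.'' There is nothing to add.
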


\begin{defn}[cf. Equation (5.37) of \cite{GM}]
\label{psidef}
For $k\geq\max\{0,-n\}$, we define the function $\psi:\Z\times \N\to \Z$ by
$$
\quad\psi(n,k)
=\left\{\begin{array}{ll} n & k=0\\ -(k+|n|) & \mbox{otherwise}
\end{array}\right.
$$
We define $\D:=\psi(c,\kappa)=\sum_{n,k}\psi(n,k)P_{n,k}$ as a densely defined operator on $\phimod$.
\end{defn}

\begin{rmk}
\label{psichange}
We remark that $\D$ does not depend on the choice of frame. However, 
there is freedom in the choice of function $\psi$ used to assemble $\D$ from $c$ and $\kappa$. 
We will see below that the bounded commutator calculation boils down to
some relatively simple estimates. This gives us some freedom in choosing 
the function $\psi$. There are reasons for preferring the definitions
$$
\psi(n,k)
=\left\{\begin{array}{ll} n & k=0\\ -\frac{1}{2}(k+n+2|n|) & \mbox{otherwise}
\end{array}\right.
\qquad
\psi(n,k)
=\left\{\begin{array}{ll} n & k=0\\ -\frac{1}{2}(k+|n|) & \mbox{otherwise}
\end{array}\right..
$$
The main reason is that these definitions restrict to the number operator
in the SMEB case, since in that case $k=\max\{0,-n\}$ always (see more in Subsection \ref{smebsubse}). 
\end{rmk}

\begin{lemma}
\label{lem:Fock-it}
The projection onto the isometric copy of the Fock space in $\phimod$ is
given by
$$
Q=\sum_{n=0}^{\infty}P_{n,0}.
$$
\end{lemma}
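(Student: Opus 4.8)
The plan is to show that both $Q$ and $\sum_{n\geq 0}P_{n,0}$ coincide with the frame projection $\sum_{\rho}\Theta_{[S_{e_\rho}],[S_{e_\rho}]}$, the sum running over all multi-indices $\rho$ (including the empty one). First I would record that $\sum_{n\geq 0}P_{n,0}$ is a well-defined adjointable projection: by Proposition \ref{lem:baby-step2} the $P_{n,k}$ are pairwise orthogonal and sum strictly to $1$, so any subfamily sums strictly to a projection. Then I would unwind Lemma \ref{lem:baby-step1} in the case $k=0$. Here $\max\{0,-n\}=0$ forces $n\geq 0$ and $P_{n,0}=Q_{n,0}$; the multi-index $\sigma$ is empty, so $E^{\otimes 0}=A$, and since $\topop_0=\Id_A$ we get $c_0=1_A$ and $P_0=\Id$, while $Pf_\emptyset=1_A$. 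Thus $W_{e_\rho,c_0^{-1/2}Pf_\emptyset}=W_{e_\rho,\emptyset}=[S_{e_\rho}]$ and
\[
P_{n,0}=\sum_{|\rho|=n}\Theta_{[S_{e_\rho}],[S_{e_\rho}]},\qquad \sum_{n\geq 0}P_{n,0}=\sum_{\rho}\Theta_{[S_{e_\rho}],[S_{e_\rho}]}.
\]

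Next I would identify the right-hand side with $Q$. Recall from the construction of $\phimod$ that $\nu\mapsto[S_\nu]$ is an isometric embedding of $\algFock$ into $\phimod$ whose closed range $\overline{\linspan}\{[S_\nu]:\nu\in\algFock\}$ is precisely the copy of $\Fock$ onto which $Q$ projects. Since each $[S_{e_\rho}]$ lies in $Q\phimod$, every summand $\Theta_{[S_{e_\rho}],[S_{e_\rho}]}$ annihilates $(1-Q)\phimod$, hence so does the strict sum. Conversely, for homogeneous $\nu\in\algFock$ of degree $m$, I would compute $\bigl(\sum_\rho\Theta_{[S_{e_\rho}],[S_{e_\rho}]}\bigr)[S_\nu]=\sum_\rho[S_{e_\rho}]([S_{e_\rho}]|[S_\nu])_A$ using that $([S_{e_\rho}]|[S_\nu])_A=\Phi_\infty(S_{e_\rho}^{*}S_\nu)$ vanishes unless $|\rho|=m$ (when $|\rho|\neq m$ the element $S_{e_\rho}^{*}S_\nu$ is homogeneous of nonzero degree, so Lemma \ref{computeinnerprod} applies) and equals $(e_\rho|\nu)_A$ when $|\rho|=m$; the right-frame identity $\sum_{|\rho|=m}e_\rho(e_\rho|\nu)_A=\nu$ in $E^{\otimes m}$ then collapses the sum to $[S_\nu]$. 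Hence the strict sum restricts to the identity on the dense submodule $\linspan\{[S_\nu]:\nu\in\algFock\}$ of $Q\phimod$, and therefore on all of $Q\phimod$; combined with the previous observation this gives $\sum_\rho\Theta_{[S_{e_\rho}],[S_{e_\rho}]}=Q$, i.e.\ $Q=\sum_{n\geq 0}P_{n,0}$.

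I do not expect a genuine obstacle here; the only point requiring care is the bookkeeping at $k=0$ — recognising that this part of the decomposition strips away the normalising factor $c_0^{-1/2}$ and the projection $P$ (both trivial on $E^{\otimes 0}=A$) so that $Q_{n,0}$ reduces to the $n$-th layer of the standard Fock frame $\{[S_{e_\rho}]\}$ — together with keeping straight that the isometric copy of $\Fock$ in $\phimod$ is exactly $\overline{\linspan}\{[S_\nu]:\nu\in\algFock\}$. Both facts are already in place from the preceding subsection.
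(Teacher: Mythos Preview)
Your proposal is correct and follows the same underlying logic as the paper: the constraint $k\geq\max\{0,-n\}$ forces $n\geq 0$ when $k=0$, and then $P_{n,0}=Q_{n,0}=\sum_{|\rho|=n}\Theta_{[S_{e_\rho}],[S_{e_\rho}]}$ is the $n$-th layer of the Fock frame projection. The paper's proof is the single line ``This is just $k=0\Rightarrow n\geq 0$,'' leaving all of the bookkeeping you spell out (triviality of $c_0$, $P_0$, identification of $W_{e_\rho,\emptyset}$ with $[S_{e_\rho}]$, and the frame reproduction of $\Fock$) to the reader.
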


\begin{proof}
This is just $k=0\Rightarrow n\geq 0$.
\end{proof}

\begin{lemma}
\label{lem:mu-n-r}
For all homogeneous $\mu,\alpha,\beta\in \algFock$, $n\in\Z$ we have
$$
S_\mu P_{n,k}W_{\alpha,\beta}=\left\{\begin{matrix}Q_{n+|\mu|,k}W_{\mu\alpha,\beta} & n+k=0,\\ 
P_{n+|\mu|,k}W_{\mu\alpha,\beta} & n+k>0. \end{matrix}\right.
$$
\end{lemma}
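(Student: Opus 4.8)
The plan is to compute $S_\mu P_{n,k}W_{\alpha,\beta}$ by first reducing to the generators $Q_{n,k}$ on which we have explicit formulas from Lemma \ref{lem:baby-step1}, and then to track how multiplication by $S_\mu$ interacts with the index shifts. First I would recall that $P_{n,k}=Q_{n,k}-Q_{n,k-1}$ for $k>\max\{0,-n\}$ and $P_{n,\max\{0,-n\}}=Q_{n,\max\{0,-n\}}$, so it suffices to understand $S_\mu Q_{n,k}W_{\alpha,\beta}$. Using the defining formula $Q_{n,k}=\sum_{|\rho|-|\sigma|=n,\,|\sigma|=k}\Theta_{W_{e_\rho,c^{-1/2}_{|\sigma|}Pf_\sigma},W_{e_\rho,c^{-1/2}_{|\sigma|}Pf_\sigma}}$ and the fact that $S_\mu W_{e_\rho,\xi}=W_{\mu\otimes e_\rho,\xi}$, the key observation is that left multiplication by $S_\mu$ sends the frame vector $W_{e_\rho,c^{-1/2}_{|\sigma|}Pf_\sigma}$ (with $|\rho|=n+k$) to $W_{\mu\otimes e_\rho,c^{-1/2}_{|\sigma|}Pf_\sigma}$, whose ``left length'' is now $|\mu|+n+k$ while the ``right length'' $|\sigma|=k$ is unchanged. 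Since $\{\mu\otimes e_\rho:|\rho|=n+k\}$ together with the frame property for $E^{\otimes(|\mu|+n+k)}$ lets us re-expand, one finds $S_\mu Q_{n,k}=Q_{n+|\mu|,k}S_\mu$ as operators, at least after inserting the frame relation $\sum_{|\rho|=n+k}\Theta_{e_\rho,e_\rho}=\mathrm{Id}$; this identity is really the content of the computation in Lemma \ref{lem:baby-step1} specialised to this situation.

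Granting $S_\mu Q_{n,k}=Q_{n+|\mu|,k}S_\mu$, the case $n+k>0$ follows by telescoping: $S_\mu P_{n,k}=S_\mu(Q_{n,k}-Q_{n,k-1})=Q_{n+|\mu|,k}S_\mu-Q_{n+|\mu|,k-1}S_\mu=P_{n+|\mu|,k}S_\mu$, using that $n+k>0$ forces $k>\max\{0,-(n+|\mu|)\}$ as well (since $|\mu|\geq0$ makes $-(n+|\mu|)\leq -n$, so $k\geq\max\{0,-n\}$ already exceeds the new threshold when $k>\max\{0,-n\}$, and when $k=\max\{0,-n\}=-n$ with $n+k>0$ is vacuous — so in the range $n+k>0$ we are always in the generic telescoping regime or the base regime, and both collapse to $P_{n+|\mu|,k}$). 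Applying this to $W_{\alpha,\beta}$ and using $S_\mu W_{\alpha,\beta}=W_{\mu\alpha,\beta}$ gives $S_\mu P_{n,k}W_{\alpha,\beta}=P_{n+|\mu|,k}W_{\mu\alpha,\beta}$. For the boundary case $n+k=0$, i.e. $k=-n=\max\{0,-n\}$, we have $P_{n,k}=Q_{n,k}$, so $S_\mu P_{n,k}W_{\alpha,\beta}=Q_{n+|\mu|,k}S_\mu W_{\alpha,\beta}=Q_{n+|\mu|,k}W_{\mu\alpha,\beta}$, which is exactly the claimed formula. Note that here $k=-n$ need not equal $\max\{0,-(n+|\mu|)\}$ when $|\mu|>0$, which is precisely why the answer must be phrased in terms of $Q_{n+|\mu|,k}$ rather than $P_{n+|\mu|,k}$ — a point worth flagging explicitly in the writeup.

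The step I expect to be the main obstacle is establishing the intertwining relation $S_\mu Q_{n,k}=Q_{n+|\mu|,k}S_\mu$ cleanly. One cannot simply manipulate the rank-one operators formally, because $\Theta_{S_\mu x,S_\mu x}=S_\mu\Theta_{x,x}S_\mu^*$ only, and $S_\mu^*S_\mu\neq 1$ in $\O_E$ in general. The correct route is to test both sides against an arbitrary generator $W_{\alpha,\beta}$ and invoke Lemma \ref{lem:baby-step1}'s explicit case formulas for $Q_{n,k}W_{\alpha,\beta}$, then separately compute $Q_{n+|\mu|,k}W_{\mu\alpha,\beta}$ and match. In the regime $r=n+k\leq|\beta|$ (equivalently $r\leq$ appropriate length) one gets $Q_{n,k}W_{\alpha,\beta}=W_{\underline{\alpha}\,{}_A(\overline\alpha|\mathfrak q\overline\beta),P\underline\beta}$ with $|\underline\alpha|=n+k$, $|\underline\beta|=k$; prepending $\mu$ does not change the splitting point $k$ on the $\beta$ side nor the truncation point $n+k$ measured on the lengthened $\mu\alpha$, so $Q_{n+|\mu|,k}W_{\mu\alpha,\beta}=W_{\underline{(\mu\alpha)}_{|\mu|+n+k}\,{}_A(\overline{(\mu\alpha)}|\mathfrak q\overline\beta),P\underline\beta}=S_\mu W_{\underline\alpha\,{}_A(\overline\alpha|\mathfrak q\overline\beta),P\underline\beta}$, giving the match; the regime $r\geq$ length is the easier identity case $W_{\mu\alpha,P\beta}=S_\mu W_{\alpha,P\beta}$. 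Carrying out this case analysis carefully, keeping track of which length the truncation index refers to, is the only real work; everything else is bookkeeping via Lemma \ref{wmunuwm} and the frame relations.
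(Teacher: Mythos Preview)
Your approach is correct and is exactly the route the paper takes: the paper's proof consists of the single sentence ``It is straightforward to check the corresponding relations for the $Q_{n,k}$, from which the statement of the Lemma follows immediately,'' and your proposal simply unpacks this by verifying $S_\mu Q_{n,k}W_{\alpha,\beta}=Q_{n+|\mu|,k}W_{\mu\alpha,\beta}$ via the explicit formulas of Lemma~\ref{lem:baby-step1} and then telescoping. Your case analysis on the thresholds $\max\{0,-n\}$ versus $\max\{0,-(n+|\mu|)\}$ is the right bookkeeping, though the parenthetical explanation could be tightened (the missing sub-case $k=\max\{0,-n\}=0$ with $n>0$ is handled correctly by your ``base regime'' remark since then $P_{n,0}=Q_{n,0}$ and likewise $P_{n+|\mu|,0}=Q_{n+|\mu|,0}$).
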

\begin{proof} It is straightforward to check the corresponding relations for the $Q_{n,k}$, 
from which the statement of the Lemma follows immediately.
\end{proof}

\begin{thm}
\label{definunbdd}
The data
$(\O_E,\phimod,\D)$ defines an odd unbounded Kasparov module
which represents the class of the extension 
$$
0\to \K_A(\Fock)\to \T_E\to \O_E\to 0
$$
in $KK^1(\O_E,A)$. 
\end{thm}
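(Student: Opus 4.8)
The plan is to verify the three defining properties of an odd unbounded Kasparov module for $(\O_E,\phimod,\D)$: (i) $\D$ is self-adjoint and regular with compact resolvent relative to $A$, i.e. $(1+\D^2)^{-1/2}\in\K_A(\phimod)$; (ii) the commutators $[\D,S_\nu]$ extend to bounded operators on $\phimod$ for $\nu$ ranging over a dense subalgebra of $\O_E$ — it suffices to take the $*$-algebra generated by $\{S_\nu:\nu\in\algFock\}$; and (iii) the bounded transform $\D(1+\D^2)^{-1/2}$ represents the same $KK^1$-class as the bounded representative $2Q-1$ of Proposition 3.14 of \cite{RRS}. Each of these is essentially reduced to the diagonal structure of $\D=\sum_{n,k}\psi(n,k)P_{n,k}$ together with Lemma \ref{lem:mu-n-r}.

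For (i), self-adjointness and regularity are immediate because $\D$ is diagonal in the orthogonal direct sum decomposition \eqref{phimoddec} into finitely generated projective submodules $P_{n,k}\phimod$, with real eigenvalues $\psi(n,k)$; this is the point of Proposition \ref{propertiesofck} and the remark opening Section \ref{sec:unbdd}. Compactness of the resolvent follows since each $P_{n,k}$ is a finite rank projection in $\K_A(\phimod)$ (Proposition \ref{lem:baby-step2}) and $|\psi(n,k)|\to\infty$ as $(n,k)\to\infty$ in the index set $\{(n,k):k\geq\max\{0,-n\}\}$: indeed $\psi(n,k)=n$ on the finitely-many-for-each-value set $k=0$, and $\psi(n,k)=-(k+|n|)$ otherwise, so each eigenvalue is attained on only finitely many $(n,k)$, whence $(1+\D^2)^{-1/2}$ is a strict limit of finite rank projections, hence compact.

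For (ii), the key computation uses Lemma \ref{lem:mu-n-r}: for homogeneous $\mu\in\algFock$ and a basis vector $P_{n,k}W_{\alpha,\beta}$, applying $S_\mu$ shifts the grading to $(n+|\mu|,k)$ (landing in $P_{n+|\mu|,k}\phimod$ when $n+k>0$, and in $Q_{n+|\mu|,k}\phimod$ when $n+k=0$). Hence on each summand $[\D,S_\mu]$ acts by the scalar difference $\psi(n+|\mu|,k)-\psi(n,k)$, and one checks from the explicit formula for $\psi$ that $\sup_{n,k}|\psi(n+|\mu|,k)-\psi(n,k)|<\infty$ — the worst case being the passage across $k=0$, where the difference is controlled by $|\mu|$; the case $n+k=0$ contributes a correction supported on finitely many summands for each value and is thus also bounded. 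Taking adjoints handles $S_\mu^*$, and since the $S_\mu$ and $S_\mu^*$ generate a dense subalgebra, the Leibniz rule gives bounded commutators with a dense subalgebra of $\O_E$. The left action of $\O_E$ on $\phimod$ preserves $\Dom(\D)$ (again via Lemma \ref{lem:mu-n-r}), so $[\D,\cdot]$ is genuinely defined there. I expect this boundedness estimate for $\psi$ to be the only genuinely delicate point, and it is exactly the reason for the particular shape of $\psi$ (cf. Remark \ref{psichange}).

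For (iii), note that $\D$ and $2Q-1$ live on the \emph{same} module $\phimod$ with the same left $\O_E$-action, so it suffices to show the bounded transform $F_\D:=\D(1+\D^2)^{-1/2}$ is a compact perturbation of $2Q-1$, or more robustly, to exhibit an operator homotopy. By Lemma \ref{lem:Fock-it}, $Q=\sum_{n\geq 0}P_{n,0}$, so $2Q-1$ acts as $+1$ on $\bigoplus_{n\geq 0}P_{n,0}\phimod$ and as $-1$ elsewhere; meanwhile $F_\D$ acts by $\psi(n,k)(1+\psi(n,k)^2)^{-1/2}$, which is $\geq 0$ exactly when $\psi(n,k)\geq 0$, i.e. precisely on the $k=0$, $n\geq 0$ part, and is strictly negative on every other summand. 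Thus $F_\D$ and $2Q-1$ have the same sign projections up to the finite rank summand where $F_\D$ could vanish ($n=k=0$, contributing only $P_{0,0}$), so $F_\D-(2Q-1)\in\K_A(\phimod)$; equivalently, the straight-line path $t\mapsto (1-t)(2Q-1)+tF_\D$ stays within Fredholm/Kasparov operators since both endpoints have the same essential sign. Hence $[(\O_E,\phimod,F_\D)]=[(\O_E,\phimod,2Q-1)]=\extcls$ in $KK^1(\O_E,A)$, and by the standard correspondence between bounded and unbounded Kasparov modules \cite{BJ}, $(\O_E,\phimod,\D)$ represents $\extcls$. This concludes the proof.
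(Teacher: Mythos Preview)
Your overall strategy matches the paper's exactly: diagonal form gives self-adjointness, regularity and compact resolvent; Lemma \ref{lem:mu-n-r} drives the commutator estimate; and Lemma \ref{lem:Fock-it} identifies the non-negative spectral projection of $\D$ with $Q$, so the bounded transform is a compact perturbation of $2Q-1$. Parts (i) and (iii) are fine.

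The gap is in (ii), in your treatment of the boundary case $n+k=0$. Your sentence ``the case $n+k=0$ contributes a correction supported on finitely many summands for each value and is thus also bounded'' is not correct as stated and does not suffice. When $m\le 0$, Lemma \ref{lem:mu-n-r} gives $S_\mu P_{m,-m}=Q_{m+|\mu|,-m}S_\mu$, and $Q_{m+|\mu|,-m}$ spreads over \emph{all} projections $P_{m+|\mu|,j}$ with $\max\{0,-m-|\mu|\}\le j\le -m$. After reindexing, the resulting correction lives on the set $\{(n,k):0\le n+k\le |\mu|\}$, which is \emph{infinite} (for every $n<0$ there are $|\mu|+1$ admissible values of $k$). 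So ``finitely many summands'' is false, and even under the charitable reading ``finitely many $k$ for each $n$'' you have not supplied the required uniform bound in $n$. The paper obtains this uniformity by an explicit cancellation: combining the boundary contribution of $S_\mu\D$ (which carries the coefficient $\psi(m,-m)=2m$) with the leftover low-degree part of $\D S_\mu$, one finds that the net coefficient of $P_{n,k}$ on this region is exactly $-(n+k)$, hence bounded by $|\mu|$ since $0\le n+k\le |\mu|$. This is precisely the content of the computation leading to \eqref{secondsumcomm} and the displayed lines following it, and it is the reason the second condition in Remark \ref{lipprop} is needed in addition to the Lipschitz-in-$n$ estimate you invoke. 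Without this computation your argument for bounded commutators is incomplete.
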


\begin{proof}
Since $\D$ is given in diagonal form with finitely generated projective eigenspaces, 
the proof of self-adjointness and regularity
is straightforward.
The range of each $P_{n,k}$ is finitely generated, and the function $\psi$ is unbounded
with value $\pm(k+|n|)$ on $P_{n,k}\phimod$, and so $(1+\D^2)^{-1/2}$
is compact. The non-negative spectral projection of $\D$ is precisely
the projection on to the isometric copy of the Fock module in $\phimod$ by
Lemma \ref{lem:Fock-it}, and so if $(\O_E,\phimod,\D)$ is an unbounded 
Kasparov module, its class represents
the extension. 

The only remaining thing to prove is that we have bounded
commutators.
Let $\mu,\,\alpha,\,\beta\in \algFock$ be homogeneous and consider the generator $S_\mu\in \O_E$. 
By Lemma \ref{lem:mu-n-r} we have the computation
\begin{align}
\nonumber
\D S_\mu W_{\alpha,\beta}-S_\mu\D W_{\alpha,\beta}
&=\sum_{n+k>0,n}\psi(n,k)P_{n,k}W_{\mu\alpha,\beta}-S_\mu\sum_{m+l>0,m}\psi(m,l)P_{m,l}W_{\alpha,\beta}\\
\nonumber
&\quad\quad+ \sum_{n}\psi(n,-n)P_{n,-n}W_{\mu\alpha,\beta}-S_\mu\sum_{m}\psi(m,-m)P_{m,-m}W_{\alpha,\beta}\\
\nonumber
&=\sum_{n+k>0,n}\psi(n,k)P_{n,k}W_{\mu\alpha,\beta}
-\sum_{m+l>0,m}\psi(m,l)P_{m+|\mu|,l}W_{\mu\alpha,\beta}\\
\nonumber
&\quad\quad + \sum_{n}\psi(n,-n)Q_{n,-n}W_{\mu\alpha,\beta}-\sum_{m}\psi(m,-m)Q_{m+|\mu|,-m}W_{\mu \alpha,\beta}\\
\label{firstsumcomm}
&=\sum_{m+l>0,m}\big(\psi(m+|\mu|,l)-\psi(m,l)\big)P_{m+|\mu|,l}W_{\mu\alpha,\beta} \\
\label{secondsumcomm}
&\quad\quad +\sum_{n\leq |\mu |}\Big(\psi(n,-n)(Q_{n,-n}-Q_{n+|\mu|,-n})+\sum_{k=0}^{|\mu|-n}\psi(n,k)P_{n,k}W_{\mu\alpha,\beta}\Big)
\end{align}
A case-by-case check shows the identity
$$
\psi(m+|\mu|,l)-\psi(m,l)
=\left\{\begin{array}{ll} |\mu| & m\geq0,\;l=0\\
-|\mu| & m\geq 0,\; l>0\\
-|\mu|+2|m| & m<0,\,m+|\mu|\geq 0\\
|\mu| & m<0,\,m+|\mu|<0.\end{array}\right. .
$$
Therefore the first sum \eqref{firstsumcomm} of the commutator defines a bounded operator. 
For the second two sums \eqref{secondsumcomm}, observe that since $n\leq k\leq |\mu | $ and $k\geq 0$, 
it suffices to address the case $n<0$, in which case $k>0$ and thus $\psi(n,k)=-k+n$ always. 
For fixed $n<0$ we can compute
\begin{align*}
\Big(\psi(n,-n)(Q_{n,-n}-&Q_{n+|\mu|,-n})+\sum_{k=\max\{0,-n\}}^{|\mu|-n}\psi(n,k)P_{n,k}\Big)W_{\mu\alpha,\beta} \\
&=-\Big(2n(Q_{n+|\mu|,-n}-Q_{n,-n})-\sum_{k=-n}^{|\mu|-n}(n-k)P_{n,k}\Big)W_{\mu\alpha,\beta}\\
&=-\sum_{k=-n}^{|\mu|-n}(n+k)P_{n,k}W_{\mu\alpha,\beta}
+2n\Big(Q_{n,-n}-Q_{n+|\mu|,-n}+\sum_{k=-n}^{|\mu|-n}P_{n,k}\Big)W_{\mu\alpha,\beta}\\
&=-\sum_{k=-n}^{|\mu|-n}(n+k)P_{n,k}W_{\mu\alpha,\beta}.
\end{align*}
Thus, since $0\leq n+k\leq|\mu|$, the second two sums \eqref{secondsumcomm} define a bounded operator.
\end{proof}

\begin{rmk}
\label{lipprop}
We see that the crucial properties of $\psi$ for proving that $[\D,S_\mu]$ is bounded 
are that $\psi$ satisfies: for every $l>0$
there is a constant $C_l>0$ such that 
$$
|\psi(n+l,k)-\psi(n,k)|\leq C_l,\quad\forall\, (n,k)\in \Z\times \N, \; n+k\geq 0; \ \mbox{and}
$$
for every $k>0$ there is a constant $C_{k}$ such that
$$
|\psi(n,-n)-\psi(n,k)|\leq C_{k}, \quad \forall\, n\in \Z \;\;\mbox{with}\;\; n\leq -k.
$$
\end{rmk}

\begin{rmk}
An unbounded representative $[\D]$ of the extension class allows one to use the explicit
lift $[\widehat{\D}]$ to the mapping cone of the inclusion $A\hookrightarrow \O_E$
described in \cite{CPR}. This lift allows a concrete comparison, 
on the level of cycles, of the exact sequences determined by the defining extension of a 
Cuntz-Pimsner algebra and the mapping cone exact sequence of $A\hookrightarrow \O_E$.
This comparison is described in \cite{AFR}.
\end{rmk}

\begin{rmk} 
For fixed inner products, the construction of the unbounded Kasparov module is independent of the choices of left and right frames. 
It does however depend heavily on the choices of left and right inner product on the module $E$. 
In certain cases (see the discussion of SMEB's and vector bundles below) 
there is an obvious choice of left inner product, but of course not the only possible choice. 
In general the left inner product is part of the data that goes into the construction.
\end{rmk}

\subsection{Examples of unbounded Kasparov modules and spectral triples}
\label{sec:eggs}

In this subsection we will compute examples and compare to the existing works in the literature. 

\subsubsection{Self Morita equivalence bimodules}
\label{smebsubse}

For a SMEB, the depth-kore operator $\kappa$ takes a 
simple form (cf. Example \ref{smebexample}).

\begin{prop}
\label{smebunitaries}
Suppose that $E$ is a SMEB. The following mapping defines a unitary isomorphism of $A$-modules
$$
\Psi_n:\phimod^n\to E^{\ox n},\quad W_{\mu,\nu}\mapsto 
\begin{cases} 
\underline{\mu}_A(\overline{\mu}|\nu),\;& |\mu|\geq |\nu|\\
{}_A(\mu|\overline{\nu})\overline{\underline{\nu}}, \;& |\mu|< |\nu|,
\end{cases}
$$
where $|\underline{\mu}|=n$ in the first line, $|\underline{\nu}|=-n$ in the second line and in 
the same line $\overline{\underline{\nu}}\in \overline{E}^{\ox |n|}$ denotes the image of 
$\underline{\nu}$ under the anti-linear mapping $E^{\ox |n|}\to \overline{E}^{\ox |n|}$.
\end{prop}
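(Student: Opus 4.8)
The plan is to exploit the description of $\Xi_A$ in the SMEB case given in Example \ref{smebexample}, namely that $\Phi_\infty$ coincides with the conditional expectation $\rho:\O_E\to\core=A$ and that $\phimod=\bigoplus_{n\in\Z}E^{\ox n}$ with $E^{\ox(-|n|)}=\overline{E}^{\ox|n|}$. First I would unpack what $\Xi_A^n$ is under this identification: since $\Xi_A^n$ is the closed span of the $W_{\mu,\nu}$ with $|\mu|-|\nu|=n$, and the spectral subspace decomposition of Lemma \ref{wherewefindpsin} is compatible with the $\Z$-grading of $\O_E$, the degree-$n$ spectral subspace $\Xi_A^n$ is exactly the copy of $E^{\ox n}$ (for $n\geq 0$) or $\overline E^{\ox|n|}$ (for $n<0$). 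The content of the proposition is therefore to identify the explicit formula sending $W_{\mu,\nu}$ to its representative in $E^{\ox n}$, and to check it is a well-defined unitary.

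The key computational input is that in a SMEB one has the relation $S_\mu S_\nu^*$ acting on the Fock-type picture, together with $S_\mu^* S_\nu = (\mu|\nu)_A$ and, crucially for the SMEB case, the compatibility $\mu(\xi|\eta)_A = {}_A(\mu|\xi)\eta$. Concretely: for $|\mu|\geq|\nu|$, writing $\mu=\underline\mu\,\overline\mu$ with $|\overline\mu|=|\nu|$, one computes $[S_\mu S_\nu^*]=[S_{\underline\mu}S_{\overline\mu}S_\nu^*]=[S_{\underline\mu} S_{\overline\mu S_\nu^*}]=[S_{\underline\mu\,{}_A(\overline\mu|\nu)\,?}]$ — more carefully, $S_{\overline\mu}S_\nu^*$ lands in $\core=A$ and equals ${}_A(\overline\mu|\nu)$ under the SMEB identification $A\cong\K_A(E^{\ox|\nu|})$ (via $\Theta_{\overline\mu,\nu}\leftrightarrow{}_A(\overline\mu|\nu)$), so $W_{\mu,\nu}=W_{\underline\mu\cdot{}_A(\overline\mu|\nu),\,\emptyset}=\underline\mu\,{}_A(\overline\mu|\nu)$ after transporting $W_{\cdot,\emptyset}$ to $E^{\ox n}$. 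The case $|\mu|<|\nu|$ is the adjoint/mirror-image computation, using the anti-linear identification $E^{\ox|n|}\to\overline E^{\ox|n|}$ and the left inner product in place of the right. I would first establish that the stated formula agrees with the assignment $W_{\mu,\emptyset}\mapsto\mu$, $W_{\emptyset,\nu}\mapsto\overline\nu$, which pins down the unitary on the dense set of monomials.

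Then I would verify the three routine points: (i) well-definedness — the formula respects the relations $W_{\mu,\nu}=W_{\mu a,\nu a^*\!}$-type identities and is independent of how one splits $\mu=\underline\mu\,\overline\mu$ once $|\overline\mu|=|\nu|$ is fixed; this is where the SMEB compatibility condition does the work, guaranteeing the two pictures of $S_{\overline\mu}S_\nu^*$ match; (ii) $A$-linearity — immediate since all operations are on the far right; (iii) isometry — compute $(\Psi_n W_{\mu,\nu}|\Psi_n W_{\mu',\nu'})_A$ in $E^{\ox n}$ and compare with $(W_{\mu,\nu}|W_{\mu',\nu'})_A=\Phi_\infty(S_\nu S_\mu^* S_{\mu'} S_{\nu'}^*)$, using Lemma \ref{computeinnerprod} (with all $\topop_\ell=1$, $c_\ell=1$ in the SMEB case since $\beta_\ell=0$), and surjectivity is clear since monomials span $E^{\ox n}$. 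The main obstacle I anticipate is bookkeeping the case distinction $|\mu|\gtrless|\nu|$ consistently with the anti-linear flip $E^{\ox|n|}\to\overline E^{\ox|n|}$ — in particular making sure that on the overlap and under adjunction $\Psi_n$ and $\Psi_{-n}$ are genuine mutual adjoints/inverses, and that the left versus right inner product bookkeeping in the $|\mu|<|\nu|$ branch is correct; the algebra itself is light once the SMEB relation ${}_A(\mu|\xi)\eta=\mu(\xi|\eta)_A$ is invoked at the right moment.
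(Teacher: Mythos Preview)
Your plan is correct. The paper itself does not give a proof of this proposition at all: it simply states that ``the proof of the previous proposition follows from the proof of \cite[Theorem 3.1]{RRS}.'' Your direct argument---reducing $S_\mu S_\nu^*$ in the SMEB case via $S_{\overline{\mu}}S_\nu^* = {}_A(\overline{\mu}\mid\nu)\in A=\core$ (using $\Theta_{\overline{\mu},\nu}\leftrightarrow{}_A(\overline{\mu}\mid\nu)$ under $A\cong\K_A(E^{\ox|\nu|})$), then checking isometry through $\Phi_\infty=\rho$ with $\beta_\ell=0$---is exactly the computation that underlies the cited result, so you are effectively reconstructing the RRS proof rather than taking a different route. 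One small correction: your aside ``with all $\topop_\ell=1$, $c_\ell=1$'' is right, but the reason is that $e^{\beta_\ell}=\Phi_\ell(\mathrm{Id})=\sum_\rho{}_A(e_\rho\mid e_\rho)=1_A$ follows from the SMEB compatibility (each $\Theta_{e_\rho,e_\rho}$ corresponds to ${}_A(e_\rho\mid e_\rho)$ and these sum to $\mathrm{Id}_E\leftrightarrow 1_A$), which you should state rather than assert. The bookkeeping you flag for the $|\mu|<|\nu|$ branch is routine once you note $aS_{\underline{\nu}}^*=S_{\underline{\nu}a^*}^*$ and that the anti-linear flip sends $\eta b\mapsto b^*\overline{\eta}$.
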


The proof of the previous proposition follows from the proof of \cite[Theorem 3.1]{RRS}. 
From Proposition \ref{smebunitaries} we deduce the structure of the operators $P_{n,k}$ 
from Proposition \ref{lem:baby-step2}. 
We pick a right frame $(e_i)_{i=1}^N$ as in Section \ref{sec:been-done} 
and take $f_j=e_j$ as a left frame. Using the isomorphism of 
Proposition \ref{lem:baby-step2}, we have that
$$
P_{n,k}=
\begin{cases}
\sum_{|\sigma|=n} \Theta_{W_{e_\sigma,\emptyset},W_{e_\sigma,\emptyset}}, \; &k=0,\; n\geq 0,\\
\sum_{|\sigma|=n} \Theta_{W_{\emptyset,e_\sigma},W_{\emptyset,e_\sigma}}, \; &n+k=0,\\
0, &\mbox{otherwise,}
\end{cases}
\ =\ \begin{cases}
\Psi_n, \; &k=0,\; n\geq 0,\\
\Psi_n, \; &n+k=0,\\
0, &\mbox{otherwise.}
\end{cases}
$$
We sum up the consequences for $\kappa$ in a proposition.

\begin{prop}
\label{smebcase}
If $E$ is a SMEB, then 
$$
\kappa=\overline{\sum_{n<0} |n|\Psi_n}=\frac{1}{2}\left(|c|-c\right).
$$
In particular, for $\psi$ and $\D$ as in Remark \ref{psichange} we get $
\D=c,$ the usual number operator.
\end{prop}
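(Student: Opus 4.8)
The plan is to verify the three asserted equalities in turn, starting from the explicit description of the projections $P_{n,k}$ in the SMEB case just established above. First I would record that, by the displayed computation preceding Proposition \ref{smebcase}, the depth-kore operator $\kappa_0=\sum_{n,k}kP_{n,k}$ is supported only on those $(n,k)$ with $k=\max\{0,-n\}$; concretely $P_{n,k}=0$ unless either $k=0,\ n\geq 0$ (contributing $0$ to $\kappa$) or $n+k=0$, i.e.\ $k=-n$ with $n<0$ (contributing $|n|$). Hence on the algebraic core $\kappa_0=\sum_{n<0}|n|\Psi_n$, and since each $\Psi_n$ is a (bounded, adjointable) projection with mutually orthogonal ranges summing strictly to the identity by Proposition \ref{lem:baby-step2} (or Lemma \ref{wherewefindpsin}), the closure $\kappa=\overline{\sum_{n<0}|n|\Psi_n}$ is immediate; this is the standard fact that a ``diagonal'' operator with scalar eigenvalues on an orthogonal decomposition is essentially self-adjoint on the algebraic span, which is already part of the setup in Section \ref{subsec:kas-mod}.

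For the second equality $\kappa=\tfrac12(|c|-c)$, I would note that $c=\overline{\sum_n n\Psi_n}$ by definition, so $|c|=\overline{\sum_n|n|\Psi_n}$ by functional calculus (the $\Psi_n$ are the spectral projections of $c$), and therefore $\tfrac12(|c|-c)=\overline{\sum_n\tfrac12(|n|-n)\Psi_n}=\overline{\sum_{n<0}|n|\Psi_n}$, since $\tfrac12(|n|-n)$ equals $|n|$ for $n<0$ and $0$ for $n\geq 0$. This matches the first expression on the nose, including domains, because both operators are the closures of the same diagonal operator on the common core $\bigoplus^{\mathrm{alg}}_{n}\Psi_n\phimod$.

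For the final claim, I would take $\psi$ to be either of the two variants displayed in Remark \ref{psichange}; since in the SMEB case $k=\max\{0,-n\}$ always, we have $\psi(n,0)=n$ for $n\geq 0$, and for $n<0$, $k=-n$, so $\psi(n,-n)=-\tfrac12(-n+(-n)+2|n|)=-\tfrac12\cdot 4|n|\cdot\tfrac12$— more carefully, with $\psi(n,k)=-\tfrac12(k+n+2|n|)$ one gets $\psi(n,-n)=-\tfrac12(-n+n+2|n|)=-|n|=n$, and with $\psi(n,k)=-\tfrac12(k+|n|)$ one gets $\psi(n,-n)=-\tfrac12(-n+|n|)=-\tfrac12\cdot 2|n|=-|n|=n$. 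In either case $\psi(n,k)=n$ on every $(n,k)$ with $P_{n,k}\neq 0$, so $\D=\sum_{n,k}\psi(n,k)P_{n,k}=\sum_n n\Psi_n=c$. I do not expect a genuine obstacle here: the only point requiring a little care is the bookkeeping of operator domains — ensuring that ``closure of a diagonal operator'' is used consistently so that the three expressions agree as unbounded self-adjoint regular operators and not merely on the algebraic core — but this is routine given Proposition \ref{propertiesofck} and the orthogonal decomposition of $\phimod$.
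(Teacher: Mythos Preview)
Your proof is correct and follows the same approach as the paper, which leaves the proposition as an immediate consequence of the displayed formula for $P_{n,k}$ in the SMEB case and states no separate proof. You have simply filled in the routine verifications (including the domain bookkeeping and the check that both $\psi$'s from Remark~\ref{psichange} give $\psi(n,\max\{0,-n\})=n$) that the paper omits.
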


\subsubsection{The depth-kore operator $\kappa$ for Cuntz-Krieger algebras}
\label{localhomsubs}

In \cite[Theorem 5.1.7]{GM}, a family of unbounded bivariant $(O_{\pmb{A}},C(\Omega_{\pmb{A}}))$-cycles 
$(O_{\pmb{A}},L^{2}(\mathcal{R}_{\pmb{A}})_{C(\Omega_{\pmb{A}})}, \mathcal{D}_{\lambda})$, 
parameterised by $\lambda$ in the set of finite $\pmb{A}$-admissible words was constructed. 
We let $\circ$ denote the empty word. It was shown that the mapping 
$K^{0}(C(\Omega_{\pmb{A}}))\to K^{1}(O_{\pmb{A}})$ 
defined by taking the Kasparov product with the cycle 
$(L^{2}(\mathcal{R}_{\pmb{A}})_{C(\Omega_{\pmb{A}})},D_{\circ}))$ is surjective. 
We now give a different perspective on this cycle and identify its class. 
The case of general surjective local homeomorphisms is dealt 
with in the context of Smale spaces in \cite{dgmw} by the first two listed authors with 
Robin Deeley and Michael Whittaker.

\begin{thm} 
\label{comparingtogm}
Let $\sigma:\Omega_{\pmb{A}}\to\Omega_{\pmb{A}}$ be a subshift of finite type and 
$E={}_{\Id}C(\Omega_{\pmb{A}})_{\sigma^*}$ the associated $C^{*}$-bimodule.
Under the isomorphism 
$\Xi_{C(\Omega_{\pmb{A}})}\cong L^2(\mathcal{R}_{\pmb{A}})_{C(\Omega_{\pmb{A}})}$, 
the unbounded cycle $(\O_{E}, \Xi_{C(\Omega_{\pmb{A}})}, \D)$ 
constructed in Theorem \ref{definunbdd} 
coincides with the unbounded cycle 
$(O_{\pmb{A}}, L^{2}(\mathcal{R}_{\pmb{A}})_{C(\Omega_{\pmb{A}})},\D_{\circ})$  constructed in 
\cite[Theorem 5.1.7]{GM} for Cuntz-Krieger algebras. 
\end{thm}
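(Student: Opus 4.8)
The plan is to prove the two cycles coincide by matching their three pieces of data: the Hilbert module, the left action, and the unbounded operator. The identification of modules $\Xi_{C(\Omega_{\pmb{A}})}\cong L^2(\mathcal{R}_{\pmb{A}})_{C(\Omega_{\pmb{A}})}$ together with the compatibility of the left $O_{\pmb{A}}$-action has already been recorded (Lemma \ref{psiinftyandunit} and the example following it), so the only real content is to show that $\D$ agrees with $\D_\circ$ under this identification. Since both operators are built diagonally, it suffices to show they act by the same scalar on each eigenspace, i.e. that the orthogonal decomposition of Theorem \ref{definunbdd} matches the one underlying $\D_\circ$, and that the eigenvalue functions agree.

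First I would recall from \cite[Section 5]{GM} that $\D_\circ$ is the operator $\psi(c,\kappa_{\pmb{A}})$ where $c$ is multiplication by the cocycle $(x,n,y)\mapsto n$, $\kappa_{\pmb{A}}$ is the function of Equation \eqref{kappaadef}, and $\psi$ is exactly the function of Definition \ref{psidef} (the reference to \cite[Equation (5.37)]{GM} in that definition is precisely this point). Hence $\D_\circ=\sum_{n,k}\psi(n,k)\,\chi_{\mathcal{R}_{\pmb{A}}^{n,k}}$ acting by multiplication, where $\mathcal{R}_{\pmb{A}}^{n,k}=c^{-1}(n)\cap\kappa_{\pmb{A}}^{-1}(k)$. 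On the other side, $\D=\sum_{n,k}\psi(n,k)P_{n,k}$ with the same $\psi$ by Definition \ref{psidef}. So the whole statement reduces to the identity of projections: under $\Xi_{C(\Omega_{\pmb{A}})}\cong L^2(\mathcal{R}_{\pmb{A}})_{C(\Omega_{\pmb{A}})}$, the projection $P_{n,k}$ equals multiplication by $\chi_{\mathcal{R}_{\pmb{A}}^{n,k}}$, i.e. $P_{n,k}\Xi_{C(\Omega_{\pmb{A}})}=C(\mathcal{R}_{\pmb{A}}^{n,k})$. But this is exactly the content of Lemma \ref{descirbingdeom}, which was proved in Subsection \ref{subsec:dec-exam}.

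Therefore the proof is essentially an assembly: cite the module identification (Lemma \ref{psiinftyandunit} and the following example), note that the left actions of $O_{\pmb{A}}\cong C^*(\mathcal{R}_{\pmb{A}})$ on both sides are the multiplication/convolution action, invoke Lemma \ref{descirbingdeom} to identify the spectral projections $P_{n,k}$ with the characteristic functions of the $\mathcal{R}_{\pmb{A}}^{n,k}$, and observe that the scalar function $\psi$ used to build $\D$ in Definition \ref{psidef} is verbatim the one used to build $\D_\circ$ in \cite[Equation (5.37), Theorem 5.1.7]{GM}. Combining these, $\D$ and $\D_\circ$ are the same diagonal operator with respect to the same decomposition, and since the module and the representation agree, the unbounded cycles coincide. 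The one point requiring a little care — and the likely main obstacle — is bookkeeping the normalisation conventions of \cite{GM}: one must check that the parameter $\lambda=\circ$ (the empty word) in \cite[Theorem 5.1.7]{GM} really does produce $\kappa_{\pmb{A}}$ as in Equation \eqref{kappaadef} with no shift, and that the cocycle $c$ and the grading conventions (in particular the sign of $\psi$ and the role of the Fock/positive subspace) match; once the dictionary is fixed, everything else is a citation.
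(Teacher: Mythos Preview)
Your proposal is correct and follows essentially the same route as the paper: the paper states that the theorem is immediate from the proposition that $\kappa$ acts as multiplication by $\kappa_{\pmb{A}}$ on $C_c(\mathcal{R}_{\pmb{A}})$, which in turn follows from Lemma \ref{descirbingdeom}. You have simply unpacked this one step further by phrasing it in terms of the projections $P_{n,k}$ matching the characteristic functions of $\mathcal{R}_{\pmb{A}}^{n,k}$ and the function $\psi$ agreeing with \cite[Equation (5.37)]{GM}, which is exactly the content the paper is citing.
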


The theorem is immediate from the following proposition describing the depth-kore operator. 
The proposition in turn follows from Lemma \ref{descirbingdeom}.
Recall the function $\kappa_{\pmb{A}}\in C(\mathcal{R}_{\pmb{A}})$ from Equation \eqref{kappaadef}.

\begin{prop}
Under the isomorphism $\Xi_A\cong L^{2}(\mathcal{R}_{\pmb{A}})_{C(\Omega_{\pmb{A}})}$, 
the operator $\kappa$ of Proposition 
\ref{propertiesofck} satisfies $C_c(\mathcal{R}_{\pmb{A}})\subseteq \Dom(\kappa)$ 
and for $f\in C_c(\mathcal{R}_{\pmb{A}})$
$$
[\kappa f](x,n,y)=\kappa_{\pmb{A}}(x,n,y)f(x,n,y).
$$
\end{prop}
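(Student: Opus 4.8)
The plan is to identify, on the dense submodule $C_c(\mathcal{R}_{\pmb{A}})$, the action of $\kappa$ with multiplication by the integer-valued function $\kappa_{\pmb{A}}$ by decomposing $C_c(\mathcal{R}_{\pmb{A}})$ according to the decomposition of Proposition~\ref{lem:baby-step2} and using Lemma~\ref{descirbingdeom} to match the pieces. First I would recall that under the isomorphism $\Xi_{C(\Omega_{\pmb{A}})}\cong L^2(\mathcal{R}_{\pmb{A}})_{C(\Omega_{\pmb{A}})}$ of Lemma~\ref{psiinftyandunit}, Lemma~\ref{descirbingdeom} gives $P_{n,k}\Xi_{C(\Omega_{\pmb{A}})}=C(\mathcal{R}^{n,k}_{\pmb{A}})$, where $\mathcal{R}^{n,k}_{\pmb{A}}$ is the clopen subset of $\mathcal{R}_{\pmb{A}}$ defined in Equation~\eqref{thegroupoidradecomp}. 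By construction $\kappa=\overline{\kappa_0}$ with $\kappa_0=\sum_{n,k}kP_{n,k}$ defined on the algebraic direct sum; so it suffices to show that $C_c(\mathcal{R}_{\pmb{A}})$ lies in the algebraic span of the $C(\mathcal{R}^{n,k}_{\pmb{A}})$ (equivalently, in the domain of $\kappa_0$) and that $\kappa_0$ acts there as claimed.

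The key observation is that for each $(x,n,y)\in \mathcal{R}_{\pmb{A}}$ there is a \emph{unique} $k\geq \max\{0,-n\}$ with $(x,n,y)\in \mathcal{R}^{n,k}_{\pmb{A}}$, namely $k=\kappa_{\pmb{A}}(x,n,y)$ as defined in Equation~\eqref{kappaadef}: this is exactly the content of the decomposition $\mathcal{R}^{n,k}_{\pmb{A}}=c^{-1}(n)\cap \kappa_{\pmb{A}}^{-1}(k)$ noted in Subsection~\ref{briefreviewck}, together with the fact that $\kappa_{\pmb{A}}$ is well-defined and continuous (hence locally constant, since it is $\Z$-valued on an \'etale groupoid). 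Therefore any $f\in C_c(\mathcal{R}_{\pmb{A}})$ has compact support meeting only finitely many of the $\mathcal{R}^{n,k}_{\pmb{A}}$: indeed $c$ is bounded on the compact set $\mathrm{supp}(f)$, so only finitely many $n$ occur, and for each such $n$ the function $\kappa_{\pmb{A}}$ is bounded on the compact slice of $\mathrm{supp}(f)$, so only finitely many $k$ occur. Writing $f=\sum_{n,k}f\cdot\chi_{\mathcal{R}^{n,k}_{\pmb{A}}}$, this is a finite sum with $f\cdot\chi_{\mathcal{R}^{n,k}_{\pmb{A}}}\in C(\mathcal{R}^{n,k}_{\pmb{A}})=P_{n,k}\Xi_{C(\Omega_{\pmb{A}})}$, so $f\in\Dom(\kappa_0)\subseteq\Dom(\kappa)$. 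Then $[\kappa f](x,n,y)=\sum_{n',k}k\,[P_{n',k}f](x,n,y)=k\, f(x,n,y)$ for the unique $k=\kappa_{\pmb{A}}(x,n,y)$, which is the asserted formula.

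The main obstacle, such as it is, is purely bookkeeping: verifying carefully that $\kappa_{\pmb{A}}$ really is continuous (so that $\mathcal{R}^{n,k}_{\pmb{A}}$ is clopen and the slices of $\mathrm{supp}(f)$ are compact-open), and that the finite sum $f=\sum f\cdot\chi_{\mathcal{R}^{n,k}_{\pmb{A}}}$ is orthogonal, which is immediate for support reasons since the $\mathcal{R}^{n,k}_{\pmb{A}}$ are pairwise disjoint. Both points are already essentially contained in Lemma~\ref{descirbingdeom} and the review in Subsection~\ref{briefreviewck}, so the proof reduces to assembling these ingredients; I would write it as: invoke Lemma~\ref{descirbingdeom} for $P_{n,k}\Xi_{C(\Omega_{\pmb{A}})}=C(\mathcal{R}^{n,k}_{\pmb{A}})$, note the partition of $\mathcal{R}_{\pmb{A}}$ into the clopen sets $\mathcal{R}^{n,k}_{\pmb{A}}$ with the membership of $(x,n,y)$ governed by $k=\kappa_{\pmb{A}}(x,n,y)$, deduce $C_c(\mathcal{R}_{\pmb{A}})\subseteq \Dom(\kappa_0)$ by the compactness argument above, and finally read off the formula from $\kappa_0=\sum_{n,k}kP_{n,k}$.
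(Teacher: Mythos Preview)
Your proposal is correct and follows exactly the route the paper indicates: the paper simply states that the proposition ``follows from Lemma~\ref{descirbingdeom}'' without further detail, and you have filled in precisely the compactness and partition argument needed to pass from $P_{n,k}\Xi_{C(\Omega_{\pmb{A}})}=C(\mathcal{R}^{n,k}_{\pmb{A}})$ to the claim about $\kappa$ on $C_c(\mathcal{R}_{\pmb{A}})$.
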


As a consequence of Theorem \ref{comparingtogm}, we see that the cycle 
$(O_{\pmb{A}}, L^{2}(\mathcal{R}_{\pmb{A}})_{C(\Omega_{\pmb{A}})},\D_{\circ})$ 
represents the extension
\[0\to \K_{C(\Omega_{\pmb{A}})}(\mathcal{F}_{\pmb{A}})\to \T_{\pmb{A}} \to O_{\pmb{A}}\to 0,\]
obtained from $E={}_{\Id}C(\Omega_{\pmb{A}})_{\sigma^*}$ and the isomorphism 
$O_{\pmb{A}}\cong \O_{E}$. Here $\mathcal{F}_{\pmb{A}}$ denotes the Fock module 
constructed from $E={}_{\Id}C(\Omega_{\pmb{A}})_{\sigma^*}$. In particular, 
the unbounded cycle $(O_{\pmb{A}}, L^{2}(\mathcal{R}_{\pmb{A}})_{C(\Omega_{\pmb{A}})},\D_{\circ})$ 
constructed in \cite[Theorem 5.1.7]{GM} represents the boundary maps in the associated 
Pimsner six-term exact sequence in $K$-homology
\begin{equation}
\label{sixexpimforoa}
\xymatrix{K^0(O_{A}) \ar[rr] & & K^0(C(\Omega_{\pmb{A}})) \ar[rr]^{1-[E]} && K^0(C(\Omega_{\pmb{A}})) \ar[d]^{[\D_{\circ}]} \\ 
K^1(C(\Omega_{\pmb{A}})) \ar[u]_{[\D_{\circ}]} & & K^1(C(\Omega_{\pmb{A}})) \ar[ll]_{1-[E]} && K^1(O_{\pmb{A}}) \ar[ll]}
\end{equation} 
Here $[E]\in KK_0(C(\Omega_{\pmb{A}}),C(\Omega_{\pmb{A}}))$ denotes the class associated with 
the bimodule $E$ represented by the (unbounded) Kasparov module $(C(\Omega_{\pmb{A}}),E,0)$. The six term exact 
sequence \eqref{sixexpimforoa} is an example of a Pimsner sequence in $KK$-theory, for further details see \cite{Pimsner}.

Because $\Omega_{\pmb{A}}$ is a compact totally disconnected space, we can compute 
$K^{1}(C(\Omega_{\pmb{A}}))=0$. Thus the sequence \eqref{sixexpimforoa} reduces to
\begin{equation}
\label{pimsnerforck}
0\to K^{0}(O_{\pmb{A}}) \to K^{0}(C(\Omega_{\pmb{A}})) \xrightarrow{1-[E]}K^{0}(C(\Omega_{\pmb{A}})) \xrightarrow{[\D_{\circ}]} K^{1}(O_{\pmb{A}})\to 0.
\end{equation}
We arrive at a conceptual explanation for the surjectivity of  the map 
$K^{0}(C(\Omega_{\pmb{A}})) \xrightarrow{[\D_{\circ}]} K^{1}(O_{\pmb{A}})$ (cf. \cite[Remark 5.2.6]{GM}).
In general, the simple structure of \eqref{pimsnerforck} can not be obtained from the 
Pimsner-Voiculescu sequence (i.e.\ the Pimsner sequence for $E\otimes_A\mathcal{C}_{E}$). The universal coefficient theorem implies that $K^{0}(C(\Omega_{\pmb{A}}))=
\Hom(C(\Omega_{\pmb{A}},\Z),\Z)$ and $[E]$ acts as $\mathfrak{L}^*$. This gives yet another proof of the fact 
$K^{1}(O_{\pmb{A}})=\Z^N/(1-\pmb{A})\Z^N$.

\subsubsection{The two models for $O_N$}
\label{comparingcycletojesus}

 The odd spectral triples on $O_{\pmb{A}}$ constructed in \cite[Theorem 5.2.3]{GM} 
 are supported on the fibres of the groupoid $\mathcal{R}_{\pmb{A}}$. 
 A consequence of Theorem \ref{comparingtogm} is that these Hilbert spaces 
 are localisations of the module $\Xi_{C(\Omega_{\pmb{A}})}$. 
 For the Cuntz algebra $O_{N}$ viewed as a Cuntz-Pimsner algebra over $\C$, 
the method of Subsection \ref{subsec:kas-mod}
will produce a spectral triple. In view of Equation \eqref{KMSCuntz}, 
this spectral triple will be defined on the GNS space $L^{2}(O_{N})$ 
associated to the KMS state. In \cite{GM}, the construction of such spectral triples 
was left as an open problem. We will compare the two approaches in this subsection.
Recall the decomposition $L^2(O_N)=\bigoplus_{n,k} \mathcal{H}_{n,k}$ 
from Lemma \ref{decomposinl2on}.

\begin{thm}
\label{compwithon}
There is a self-adjoint operator $\D$ on $L^2(O_N)$ defined by 
$\D|_{\mathcal{H}_{n,k}}:=\psi(n,k)$ such that 
$(O_N,L^2(O_N),\D)$ is a $\theta$-summable spectral triple on 
$O_N$ whose class generates $K^1(O_N)$.
\end{thm}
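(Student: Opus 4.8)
The plan is to recognise Theorem \ref{compwithon} as the coefficient-algebra-$\C$ specialisation of Theorem \ref{definunbdd}, supplemented by two verifications: $\theta$-summability and the computation of the $K$-homology class.

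First I would note that for $O_N=O_{\C^N}$ the coefficients are $A=\C$, so $\Xi_\C$ is an ordinary Hilbert space, and by Lemma \ref{decomposinl2on} (together with \eqref{KMSCuntz}) it is unitarily the GNS space $L^2(O_N)$ of the KMS state, with $\mathcal{H}_{n,k}=P_{n,k}\Xi_\C$. The operator of Definition \ref{psidef} is $\D=\sum_{n,k}\psi(n,k)P_{n,k}$, hence acts on each $\mathcal{H}_{n,k}$ as the real scalar $\psi(n,k)$, so it is exactly the operator in the statement and is self-adjoint. Theorem \ref{definunbdd} then immediately gives that $(O_N,L^2(O_N),\D)$ is an odd unbounded Kasparov module --- i.e.\ (the coefficient algebra being $\C$) an odd spectral triple for $O_N$ --- whose class in $KK^1(O_N,\C)=K^1(O_N)$ is $\extcls$.

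Next I would check $\theta$-summability by counting dimensions using Lemma \ref{decomposinl2on}: $\dim\mathcal{H}_{n,0}=N^n$ for $n\geq 0$, $\dim\mathcal{H}_{n,-n}=N^{-n}$, and in the range $n+k,k>0$ the displayed spanning set of $\mathcal{H}_{n,k}$ has at most $N^{n+2k}$ elements. Since $\psi(n,0)=n$ and $\psi(n,k)=-(k+|n|)$ for $k>0$, an elementary bookkeeping shows $\dim\mathcal{H}_{n,k}\leq N^{2|\psi(n,k)|}$ for every admissible $(n,k)$, and that for each integer $p\geq 0$ there are at most $2p+1$ admissible pairs with $|\psi(n,k)|=p$. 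Therefore
\[
\Tr\!\big(e^{-t\D^2}\big)=\sum_{n,k}\dim(\mathcal{H}_{n,k})\,e^{-t\psi(n,k)^2}\leq\sum_{p\geq 0}(2p+1)\,N^{2p}\,e^{-tp^2}<\infty
\]
for all $t>0$, which is $\theta$-summability; because $\dim\mathcal{H}_{n,k}$ grows exponentially in $|\psi(n,k)|$ the triple is not finitely summable.

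The part I expect to be the main obstacle is showing that $\extcls$ generates $K^1(O_N)$. The plan is to feed the defining extension $0\to\K_\C(\Fock)\to\T_E\to O_N\to 0$ (with $E=\C^N$) into Pimsner's six-term exact sequence \cite[Theorem 4.8]{Pimsner} in $K$-homology. Here $\K_\C(\Fock)$ is Morita equivalent to $\C$ and $\T_E$ is $KK$-equivalent to $\C$; under these identifications the inclusion $\K_\C(\Fock)\hookrightarrow\T_E$ induces multiplication by $1-[E]=1-N$ on $K^0(\C)=\Z$, while $K^1(\T_E)\cong K^1(\C)=0$. The relevant segment is then
\[
K^0(\T_E)=\Z\xrightarrow{\,1-N\,}K^0(\K_\C(\Fock))=\Z\xrightarrow{\,\partial\,}K^1(O_N)\longrightarrow 0,
\]
where $\partial$ is the boundary map, i.e.\ Kasparov product with $\extcls$. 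Exactness forces $\partial$ to be onto, so $K^1(O_N)=\coker(1-N)=\Z/(N-1)\Z$ (recovering Cuntz's computation) and $\extcls=\partial(1)$ is the image of a generator, hence a generator. The delicate points are bookkeeping: tracking the Morita and $KK$-equivalences carefully enough that the boundary of the canonical generator of $K^0(\K_\C(\Fock))$ really is $\extcls$, and fixing orientation conventions so that the class of $(O_N,L^2(O_N),\D)$ coincides with this $\partial(1)$.
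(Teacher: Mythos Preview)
Your proposal is correct and follows essentially the same approach as the paper: invoke Theorem \ref{definunbdd} for $A=\C$ (via Lemma \ref{decomposinl2on}) to obtain the spectral triple representing $\extcls$, read off that $\extcls$ generates $K^1(O_N)$ from the Pimsner sequence with the map $1-N$ on $K^0(\C)$, and deduce $\theta$-summability from exponential growth of $\dim\mathcal{H}_{n,k}$ against linear growth of $\psi(n,k)$. Your version is simply more explicit on the dimension bounds and the bookkeeping of the boundary map than the paper's terse proof.
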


\begin{proof}
The exactness of the Pimsner sequence
\begin{equation*}
0\longrightarrow K^0(O_N)\longrightarrow K^0(\C)\stackrel{1-N}{\longrightarrow}K^0(\C)
\stackrel{[\D]\otimes_{\C}\cdot}{\longrightarrow}K^1(O_N)\longrightarrow 0,
\end{equation*}
and Theorem \ref{definunbdd} implies that the class of $(O_N,L^2(O_N),\D)$ 
generates $K^1(O_N)$. The $\theta$-summability of $(O_N,L^2(O_N),\D)$ 
follows from the fact that the dimensions of $\mathcal{H}_{n,k}$ grow exponentially, 
and the sequence $\psi(n,k)$ grows linearly.
\end{proof}

\begin{rmk}
We remark that by the same argument as in the 
proof of \cite[Theorem 5.2.3]{GM}, the bounded Fredholm 
module $(O_N,L^2(O_N),\D|\D|^{-1})$ is $p$-summable for any $p>0$.
\end{rmk}

Following Subsection \ref{localhomsubs}, let $(O_N,\Xi^{\Omega_N},\D^{\Omega_N})$ denote the unbounded Kasparov module that defines 
the Pimsner extension for $O_N$ over 
$C(\Omega_N)$, i.e. a class in $KK_1(O_N,C(\Omega_N))$. 
For $x\in \Omega_N$, we let $\epsilon_x:C(\Omega_N)\to \C$ 
denote the point evaluation at $x$.
We deduce the following result from \cite[Theorem 5.2.3]{GM}.

\begin{corl}
For points $x\in \Omega_N$, the localisations 
$(O_N,\Xi^{\Omega_N}_x,\D^{\Omega_N}_x)=(O_N,\Xi^{\Omega_N}\otimes_{\epsilon_x}\C,\D^{\Omega_N}\otimes_{\epsilon_x}1)$, 
define the same class in $K^1(O_N)$. Moreover, we have
\begin{equation}
\label{facforcla}
[(O_N,L^2(O_N),\D)]=
[(O_N,\Xi^{\Omega_N},\D^{\Omega_N})]\otimes_{C(\Omega_N)} [\epsilon_x]
\quad\mbox{in $K^1(O_N)$}.
\end{equation}
\end{corl}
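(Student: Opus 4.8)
The plan is to deduce both assertions from the compatibility of Kasparov products with the localisation maps $\epsilon_x: C(\Omega_N)\to\C$, combined with Theorem \ref{comparingtogm} which identifies $(O_N,\Xi^{\Omega_N},\D^{\Omega_N})$ with the cycle $(O_N,L^2(\mathcal{R}_{\pmb{A}})_{C(\Omega_{\pmb{A}})},\D_\circ)$ of \cite[Theorem 5.1.7]{GM} in the special case $\pmb{A}$ the $1\times 1$ matrix with entry $1$ (so $\Omega_N$ is the full one-sided shift on $N$ symbols and $O_{\pmb{A}}=O_N$). First I would recall that \cite[Theorem 5.2.3]{GM} constructs, for each $x\in\Omega_N$, the localised spectral triple and asserts directly that its class in $K^1(O_N)$ is independent of $x$ (the point being that $\Omega_N$ is connected in the relevant sense for $K$-theory: $K^0(C(\Omega_N))\to\C$ via any two point evaluations are homotopic through the path-components, or more precisely all point evaluations induce the same map on $K_0$ since $\Omega_N$ is a Cantor set and the relevant class is detected at the level of $K$-theory of $\C$). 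This gives the first sentence of the Corollary immediately by citation.

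For the displayed equation \eqref{facforcla}, the key step is the associativity/naturality of the Kasparov product under the unital $*$-homomorphism $\epsilon_x$. I would argue as follows. By Theorem \ref{comparingtogm}, $[(O_N,\Xi^{\Omega_N},\D^{\Omega_N})]\in KK^1(O_N,C(\Omega_N))$ is the class of the defining extension of $O_N$ viewed as a Cuntz-Pimsner algebra over $C(\Omega_N)$. The localisation $(O_N,\Xi^{\Omega_N}\otimes_{\epsilon_x}\C,\D^{\Omega_N}\otimes_{\epsilon_x}1)$ is, by the general theory of the internal Kasparov product with a $*$-homomorphism (here $[\epsilon_x]\in KK^0(C(\Omega_N),\C)$), an unbounded representative of $[(O_N,\Xi^{\Omega_N},\D^{\Omega_N})]\otimes_{C(\Omega_N)}[\epsilon_x]$. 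On the other hand, by \cite[Theorem 5.2.3]{GM} this localised cycle, viewed as living on the Hilbert space $L^2(\mathcal{R}_{\pmb{A}}^x)$ of the fibre over $x$, is precisely the spectral triple whose class we must compare with $[(O_N,L^2(O_N),\D)]$. So it remains to identify the class of the fibre cycle $(O_N,\Xi^{\Omega_N}_x,\D^{\Omega_N}_x)$ with the class of $(O_N,L^2(O_N),\D)$ of Theorem \ref{compwithon}.

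To make that final identification I would invoke the exactness of the Pimsner sequence for $O_N$ over $\C$ (displayed in the proof of Theorem \ref{compwithon}): since $K^1(O_N)=\Z^N/(1-\pmb{A})\Z^N$ is generated by the image of the single generator of $K^0(\C)=\Z$ under the boundary map $[\D]\otimes_\C\cdot$, and since Theorem \ref{definunbdd} together with Theorem \ref{compwithon} shows $[(O_N,L^2(O_N),\D)]$ equals that boundary map applied to the generator, it suffices to check that $[(O_N,\Xi^{\Omega_N}_x,\D^{\Omega_N}_x)]$ is also the image of the generator of $K^0(\C)$ under the same boundary map. But this is exactly the compatibility of the two Pimsner sequences under $\epsilon_x$: the class $[\epsilon_x]\in K^0(C(\Omega_N))$ maps the generator of $K^0(\C)$ to a generator of $K^0(C(\Omega_N))$, and pairing this with $[(O_N,\Xi^{\Omega_N},\D^{\Omega_N})]$ gives the boundary in the Pimsner sequence for $O_N$ over $C(\Omega_N)$, whose image under $K^1(O_N)=K^1(O_N)$ (the identity) is the same as the boundary computed over $\C$ because the coefficient change $\C\to C(\Omega_N)$ is compatible with the correspondences. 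Concretely: the functoriality of the defining extension under the embedding $\C\hookrightarrow C(\Omega_N)$ (constant functions), together with the fact that $\epsilon_x$ splits this embedding, gives $[(O_N,L^2(O_N),\D)]=[(O_N,\Xi^{\Omega_N},\D^{\Omega_N})]\otimes_{C(\Omega_N)}[\epsilon_x]$ directly.

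The main obstacle I anticipate is verifying cleanly that the \emph{unbounded} localisation $(O_N,\Xi^{\Omega_N}\otimes_{\epsilon_x}\C,\D^{\Omega_N}\otimes_{\epsilon_x}1)$ genuinely represents the internal Kasparov product $[(O_N,\Xi^{\Omega_N},\D^{\Omega_N})]\otimes_{C(\Omega_N)}[\epsilon_x]$ — i.e. that no connection term is needed and the naive tensor-up of the operator is already a Kasparov product. This is standard when tensoring with a point evaluation (the target module $\C$ is ``flat'' in the relevant sense and $[\epsilon_x]$ is represented by $(C(\Omega_N),\C,0)$ with zero operator, so the product operator is just $\D^{\Omega_N}\otimes 1$), but it should be spelled out, referring to the relevant result on products with homomorphisms (e.g. \cite{BJ} or \cite{mesland}); the remaining $K$-theory bookkeeping via the two Pimsner sequences is then routine, given Theorem \ref{definunbdd}, Theorem \ref{compwithon} and Theorem \ref{comparingtogm}.
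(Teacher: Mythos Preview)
Your overall plan---cite \cite[Theorem 5.2.3]{GM} for the independence-of-$x$ statement, note that localisation by $\epsilon_x$ is the Kasparov product, then identify the localised class with $[(O_N,L^2(O_N),\D)]$---matches the paper, which simply says ``We deduce the following result from \cite[Theorem 5.2.3]{GM}.'' Your worry about the unbounded localisation representing the product is indeed standard (zero operator on the right factor), so that part is fine.

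The genuine gap is in your identification step. You claim that ``the functoriality of the defining extension under the embedding $\C\hookrightarrow C(\Omega_N)$, together with the fact that $\epsilon_x$ splits this embedding'' yields $\partial_{C(\Omega_N)}\circ\epsilon_x^*=\partial_\C$ as maps $K^0(\C)\to K^1(O_N)$. But the two defining extensions are built from \emph{different} bimodules: $\C^N$ over $\C$ versus ${}_{\mathrm{id}}C(\Omega_N)_{\sigma^*}$ over $C(\Omega_N)$. These are not related by base change along $\iota:\C\hookrightarrow C(\Omega_N)$ (for instance $\C^N\otimes_\C C(\Omega_N)\cong C(\Omega_N)^N$ with the untwisted right action, not $E$). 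So there is no map of extensions or of Pimsner sequences induced by $\iota$ or $\epsilon_x$ in the way you need, and the commutativity $\partial_{C(\Omega_N)}\circ\epsilon_x^*=\partial_\C$ is precisely the statement \eqref{facforcla} you are trying to prove. Your argument is circular at this point.

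Knowing that both classes generate $K^1(O_N)\cong\Z/(N-1)\Z$ is also not enough on its own, since for $N>3$ there are several generators. The paper does not supply an independent argument here; it relies on \cite[Theorem 5.2.3]{GM}, where the class of the fibre spectral triples is computed directly (via the explicit Fredholm-module description in the groupoid picture) and seen to agree with the extension class over $\C$. You should either invoke that computation, or produce an explicit comparison of the two bounded Fredholm modules---your Pimsner-sequence bookkeeping cannot close the gap by itself.
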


It can be shown that it is not possible to perform a factorisation as in Equation \eqref{facforcla} at the level of 
unbounded cycles.

\section{The Cuntz-Pimsner algebra of a vector bundle on a closed manifold}
\label{CPalgofbundle}

Our final example is a construction of spectral triples for Cuntz-Pimsner algebras of 
vector bundles on a closed manifold. Let $M$ denote a 
closed Riemannian manifold equipped with 
an $N$-dimensional Hermitian vector bundle $V\to M$ and 
$\phi:M\to M$ a $C^1$-diffeomorphism. We denote the 
induced map by $\alpha:=\phi^{*}:C(M)\to C(M)$ and consider 
the space of continuous sections $_{\alpha}E:=\Gamma(M,V)$ 
as a Hilbert bimodule $_{\alpha}E$ via $(a\cdot f\cdot b)(x)=\alpha(a)(x)f(x)b(x)$. 
The right $C(M)$-valued inner product 
is induced from the Hermitian structure on $V$ and the left 
$C(M)$ valued inner-product is defined through
\[
{}_{C(M)}(f|g):=\alpha^{-1}((g|f)_{C(M)}).
\] 
Because of the close relationship between the left and 
right inner product, we will express all formulae using 
only the right inner product, which will be denoted 
unlabeled, by $(\cdot | \cdot )$. 
Labeled inner products will be used only when necessary.

To work with the module $_{\alpha}\Xi_{C(M)}$, we fix a right frame $(e_{\lambda})_\lambda$ for $_{\alpha}E$
as follows. Consider a finite  open cover $(U_{i})_{i=1}^M$ over which $V$ is trivialised by
$\tau_i:V|_{U_{i}}\to U_{i}\times \C^N$. Choose 
$C^{1}$-functions $\chi_{i}$ such that $(\chi_{i}^{2})_{i=1}^M$ 
is a partition of unity subordinate to $(U_{i})_{i=1}^M$.  
We then take $(\chi_{i}e_j^i)_{i=1\dots M,j=1,\dots, N}$
as our frame, where the collection $(e_j^i)_{j=1,\dots,N}$ 
is an orthonormal basis of $C^1$-sections over
each $U_i$.  

The frame  $(e_{\lambda})_\lambda$ is simultaneously a left and right frame for $_{\alpha}E$ since
\[
\sum_{\lambda} {}_{C(M)}(e|e_{\lambda})e_{\lambda}
=\sum_{\lambda}\alpha^{-1}((e_{\lambda}|e)_{C(M)})\cdot e_{\lambda}
=\sum_{\lambda}e_{\lambda}(e_{\lambda}|e)_{C(M)}=e,\quad e\in {}_{\alpha}E.
\] 
As in the case of the Cuntz algebra $O_{N}$, we have $e^{\beta_{k}}=N^{k}$, 
which is central for the bimodule structure of $E$. Thus it follows that Assumptions 
\ref{ass:one} and \ref{ass:two} are satisfied (see Remark \ref{centralityandasstwo}). 
Moreover, the projections $P_{k}$ are all equal to $1$ and $c_{k}=N^{-k}$. 

\subsection{The product operator}
We will now construct a spectral triple on $\mathcal{O}_{_{\alpha}E}$. 
Let $\Dsla=\Dsla_M$ denote an odd Dirac type operator acting on a 
graded Clifford bundle $S\to M$. We note that it is no restriction to assume that $S$ is graded, 
as our construction 
commutes with tensoring by $\C\ell_1$ (the Clifford algebra of $\C)$, see \cite[Proposition IV.A.13]{connesbook}. The module $_{\alpha}\Xi_{C(M)}$ decomposes as a direct sum $_{\alpha}\Xi_{C(M)}=\oplus_{n,k}{ }_{\alpha}\Xi^{n,k}_{C(M)}$ of finitely generated projective $C(M)$ modules $_{\alpha}\Xi^{n,k}_{C(M)}$ and we denote the associated vector bundles by $_{\alpha}\Xi^{n,k}_{V}\to M$ and the full field of Hilbert spaces by $_{\alpha}\Xi_{V}\to M$. We consider the graded Hilbert space
$$
\mathcal{H}:=\aXi_{C(M)}\otimes_{C(M)}L^2(M,S)=\bigoplus_{k\geq \max\{0,- n\}} L^2(M,\aXi_V^{n,k}\otimes S).
$$
The $C^*$-algebra $\mathcal{O}_{\aE}$ acts on $\mathcal{H}$ via its adjointable 
action on $_{\alpha}\Xi_{C(M)}$. The densely defined operator 
$\D$ on $\Xi_{C(M)}$ and the grading operator $\gamma$ on $S$ induce a densely defined 
self-adjoint operator $D$ on $\mathcal{H}$. The domain of $D$ is 
clearly
$$
\Dom(D):=\left\{f=(f_{n,k})_{n,k}\in  \bigoplus_{k\geq \max\{0,-n\}}L^2(M, \aXi_V^{n,k}\otimes S): 
\sum (|k|+|n|)^2\|f_{n,k}\|_{ L^2(M, \aXi_V^{n,k}\otimes S)}^2<\infty\right\},
$$
and $D(f_{n,r})_{k,n}:=(\D\otimes\gamma)(f_{n,k})_{n,k}=(\gamma\psi(n,k)f_{n,k})_{k,n}$.

To construct a connection on the module $\aXi_{C(M)}$, we observe that by Lemma \ref{lem:baby-step1}, a frame for $Q_{n,k}{}_{\alpha}\Xi_{C(M)}$ is given by $\{N^{k/2}W_{e_{\rho},e_{\sigma}}\}_{|\rho|=n+k,|\sigma |=k}$. For notational convenience we will 
write $W_{\rho,\sigma}:=W_{e_{\rho},e_{\sigma}}$ for multi-indices $\rho,\sigma$. 
The single indices $\iota$ and $\lambda$ will be used in the same way.

\begin{lemma}
\label{globalframe} 
The collection of vectors
\[
x_{\rho,\sigma}:=\left\{\begin{matrix} N^{|\sigma|/2}W_{\emptyset, \sigma} & |\rho|=0\\
W_{\rho, \emptyset} & |\sigma|=0\\
N^{|\sigma|/2}W_{\rho,\sigma}-N^{|\sigma|/2-1}\alpha^{-|\rho|}(e_{\sigma_{|\sigma|}}|e_{\rho_{|\rho|}})W_{\underline{\rho},\underline{\sigma}} &|\rho|>0\ \mbox{and}\ |\sigma|>0\end{matrix}\right.
\]
is a frame for $_{\alpha}\Xi_{C(M)}$. Indeed for fixed $k$ and $n$, $(x_{\rho,\sigma})_{|\sigma|=k,|\rho|-|\sigma|=n}$
forms a frame for $_{\alpha}\Xi^{n,k}_{C(M)}$. 
\end{lemma}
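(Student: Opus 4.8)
The plan is to produce the $x_{\rho,\sigma}$ by compressing, via the projections $P_{n,k}$, the frame for $Q_{n,k}\,{}_{\alpha}\Xi_{C(M)}$ recorded just above the statement, and to use the elementary fact that the compression of a frame by a projection is a frame for the range of that projection. Concretely, recall that (by Lemma \ref{lem:baby-step1} specialised to $P_\ell=\mathrm{Id}_{E^{\otimes\ell}}$, $c_\ell=N^{-\ell}$, and the frame $f_j=e_j$) the vectors $\{N^{|\sigma|/2}W_{\rho,\sigma}\}_{|\rho|=n+k,\,|\sigma|=k}$ satisfy $\sum\Theta_{N^{|\sigma|/2}W_{\rho,\sigma},N^{|\sigma|/2}W_{\rho,\sigma}}=Q_{n,k}$. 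First I would record the general statement: if $(v_\lambda)$ is a family in a right Hilbert $A$-module with $\sum_\lambda\Theta_{v_\lambda,v_\lambda}=P$ for a projection $P=P^{*}=P^{2}$, then $(Pv_\lambda)$ is a frame for the range of $P$, since for $y=Py$ one has $\sum_\lambda Pv_\lambda\langle Pv_\lambda,y\rangle=P\sum_\lambda v_\lambda\langle v_\lambda,y\rangle=Py=y$. Since Proposition \ref{lem:baby-step2} gives $Q_{n,k-1}Q_{n,k}=Q_{n,k}Q_{n,k-1}=Q_{n,k-1}$, the operator $P_{n,k}$ is a projection on $Q_{n,k}\,{}_{\alpha}\Xi_{C(M)}$ with range ${}_{\alpha}\Xi^{n,k}_{C(M)}$, so $\{P_{n,k}(N^{|\sigma|/2}W_{\rho,\sigma})\}$ is automatically a frame for ${}_{\alpha}\Xi^{n,k}_{C(M)}$, and it remains only to identify $P_{n,k}(N^{|\sigma|/2}W_{\rho,\sigma})$ with $x_{\rho,\sigma}$.

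To do this I would evaluate $Q_{n,k}$ and $Q_{n,k-1}$ on $N^{k/2}W_{\rho,\sigma}$, with $|\rho|=n+k$ and $|\sigma|=k$, using the explicit formulae of Lemma \ref{lem:baby-step1}. Because $P_\ell=\mathrm{Id}$ and the relevant ``overline'' segments are empty, $Q_{n,k}$ fixes $N^{k/2}W_{\rho,\sigma}$; and $Q_{n,k-1}$, which splits off the last tensor factors $e_{\rho_{|\rho|}}$ and $e_{\sigma_{|\sigma|}}$ and inserts $\topop_1=N^{-1}$, sends it to $N^{k/2-1}W_{e_{\underline\rho}\cdot{}_{C(M)}(e_{\rho_{|\rho|}}|e_{\sigma_{|\sigma|}}),\,e_{\underline\sigma}}$. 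The only point requiring care is rewriting this last vector: using that the left action on ${}_{\alpha}E$ is $\alpha$-twisted, so that $f\cdot a=\alpha^{-1}(a)\cdot f$ in ${}_{\alpha}E$ and hence $e_{\underline\rho}\cdot a=\alpha^{-(|\rho|-1)}(a)\cdot e_{\underline\rho}$ in $E^{\otimes(|\rho|-1)}$, together with ${}_{C(M)}(f|g)=\alpha^{-1}((g|f))$ and $W_{a\cdot e_{\underline\rho},e_{\underline\sigma}}=a\cdot W_{\underline\rho,\underline\sigma}$, one finds this term equals $N^{k/2-1}\alpha^{-|\rho|}((e_{\sigma_{|\sigma|}}|e_{\rho_{|\rho|}}))\,W_{\underline\rho,\underline\sigma}$. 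Subtracting gives $P_{n,k}(N^{k/2}W_{\rho,\sigma})=x_{\rho,\sigma}$ in the generic case $|\rho|,|\sigma|>0$. The two degenerate cases are immediate: when $|\sigma|=0$ one has $k=0$, $P_{n,0}=Q_{n,0}$, and $Q_{n,0}W_{\rho,\emptyset}=W_{\rho,\emptyset}=x_{\rho,\emptyset}$; when $|\rho|=0$ one has $n+k=0$, hence $k=\max\{0,-n\}$, $P_{n,k}=Q_{n,k}$, and $Q_{n,k}(N^{k/2}W_{\emptyset,\sigma})=N^{k/2}W_{\emptyset,\sigma}=x_{\emptyset,\sigma}$.

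Finally, summing the frame identities for the ${}_{\alpha}\Xi^{n,k}_{C(M)}$ over all $n\in\Z$ and $k\ge\max\{0,-n\}$ and invoking $\sum_{n,k}P_{n,k}=\mathrm{Id}$ strictly (Proposition \ref{lem:baby-step2}) shows that $(x_{\rho,\sigma})$ is a frame for ${}_{\alpha}\Xi_{C(M)}$. The only real obstacle is the bookkeeping in the middle step: one must keep precise track of the $\alpha$-twist in the left action on $E^{\otimes\ell}$ so that moving the coefficient ${}_{C(M)}(e_{\rho_{|\rho|}}|e_{\sigma_{|\sigma|}})$ from the right of a $(|\rho|-1)$-fold tensor to the left produces exactly the exponent $-|\rho|$ appearing in the statement, and one must separately dispose of the two boundary cases where $P_{n,k}$ reduces to $Q_{n,k}$.
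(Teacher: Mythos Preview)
Your proposal is correct and follows essentially the same approach as the paper: compress the frame $\{N^{k/2}W_{\rho,\sigma}\}_{|\rho|=n+k,|\sigma|=k}$ for $Q_{n,k}\,{}_{\alpha}\Xi_{C(M)}$ by the projection $P_{n,k}$, then identify $P_{n,k}(N^{k/2}W_{\rho,\sigma})$ with $x_{\rho,\sigma}$ via Lemma \ref{lem:baby-step1} in the three cases $|\rho|=0$, $|\sigma|=0$, and $\min\{|\rho|,|\sigma|\}>0$. Your bookkeeping of the $\alpha$-twist (moving the scalar ${}_{C(M)}(e_{\rho_{|\rho|}}|\mathfrak{q}e_{\sigma_{|\sigma|}})=N^{-1}\alpha^{-1}((e_{\sigma_{|\sigma|}}|e_{\rho_{|\rho|}}))$ from the right of $e_{\underline\rho}$ to the left picks up an extra $\alpha^{-(|\rho|-1)}$, yielding $\alpha^{-|\rho|}$) is exactly what the paper does, and your treatment of the boundary cases via $P_{n,0}=Q_{n,0}$ and $P_{n,-n}=Q_{n,-n}$ is in fact cleaner than the paper's slightly informal remark that ``$Q_{n,k-1}W_{\rho,\emptyset}=0$'' when $k=0$.
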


\begin{proof} 
The projections $P_{n,k}\leq Q_{n,k}$ are mutually orthogonal and thus the frame 
$y_{\rho,\sigma}=N^{k/2}W_{e_{\rho},e_{\sigma}}$ for $Q_{n,k}\aXi_{C(M)}$ yields a frame 
for $P_{n,k}\aXi_{C(M)}$ by computing $x_{\rho,\sigma}:=P_{n,k}y_{\rho,\sigma}$ for $|\rho|=n+k, |\sigma|=k$. 
We distinguish the three cases $|\rho|=0,|\sigma|=0$ and $\min\{|\rho|,|\sigma|\}>0$. 
Since $P_{n,-n}=Q_{n,-n}$ we find that 
$x_{\emptyset,\sigma}=y_{\emptyset,\sigma}=N^{|\sigma|/2}W_{\emptyset, e_{\sigma}}$. 
For $|\sigma|=0$ we have that $Q_{n,k-1}W_{\rho,\emptyset}=0$ and thus in this case 
$x_{\rho, \sigma}=y_{\rho,\sigma}$ as well. 
The generic case follows from a straightforward application of Lemma \ref{lem:baby-step1}:
\begin{align*} Q_{n,k-1}N^{|\sigma|/2}W_{\rho,\sigma}
&=Q_{n,k-1}N^{|\sigma|/2}W_{e_\rho,e_\sigma}=N^{|\sigma|/2}W_{e_{\underline{\rho}}{}_{C(M)}(e_{\rho_{|\rho|}}|\mathfrak{q}e_{\sigma_{|\sigma|}}),e_{\underline{\sigma}}}\\
&=N^{|\sigma|/2}W_{e_{\underline{\rho}}\alpha^{-1}(N^{-1}e_{\sigma_{|\sigma|}}|e_{\rho_{|\rho|}}),e_{\underline{\sigma}}}=N^{|\sigma|/2-1}\alpha^{-|\rho|}(e_{\sigma_{|\sigma|}}|e_{\rho_{|\rho|}})W_{\underline{\rho},\underline{\sigma}}.
\end{align*}
where $|\rho|=n+k>0$. The formula for the frame now follows readily.
\end{proof}

Denote by $_{\alpha}\Xi_{C^{1}(M)}^{n,k}$ the $C^{1}(M)$-submodule 
of $_{\alpha}\Xi^{n,k}_{C(M)}$ 
generated by $x_{\rho,\sigma}$ as in 
Lemma \ref{globalframe} 
with $|\rho|=n+k$ and $|\sigma|=k$. 
The frame induces a connection $\nabla^{n,k}$ on each finite projective module 
 $_{\alpha}\Xi^{n,k}_{C^{1}(M)}$. 
The connections $\nabla^{n,k}$ allow us to define 
twisted Dirac operators $T_{n,k}:=1\otimes_{\nabla^{n,k}}\Dsla$ on 
$\aXi^{n,k}_{V}\otimes S$. We let $T$ denote the densely defined 
operator on $\mathcal{H}$ with domain
$$\Dom(T):=\left\{f=(f_{n,k})_{k,n}\in  \bigoplus_{k,n}L^2(M, \aXi_V^{n,k}\otimes S): 
\sum_{k,n} \|T_{n,k}f_{n,k}\|_{ L^2(M, \aXi_V^{n,k}\otimes S)}^2<\infty\right\},$$
defined by $T(f_{n,k})_{k,n}:=(T_{n,k}f_{n,k})_{k,n}$.

\begin{lemma}
\label{domainsandshit}
The operators $D$ and $T$ are self-adjoint and anti-commute with each other on 
their common core 
$$
X:=\left(\bigoplus^{\textnormal{alg}}_{n,k}\aXi^{n,k}_{C^{1}(M)}\right)
\otimes^{\textnormal{alg}}_{C^{1}(M)} \Dom\Dsla
$$
Moreover, $\D_E:=D+T$ is a self-adjoint operator $\D_E$ with compact resolvent.
\end{lemma}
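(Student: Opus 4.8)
The plan is to handle the three operators $D$, $T$, and $\D_E = D+T$ in turn, exploiting the block-diagonal structure of $\mathcal{H} = \bigoplus_{n,k} L^2(M,\aXi_V^{n,k}\otimes S)$ coming from the decomposition of $\phimod$ established in Proposition \ref{lem:baby-step2}. First I would observe that $D$ acts on the $(n,k)$-block as the scalar $\psi(n,k)\gamma$, so on each block it is manifestly self-adjoint and bounded; the operator $D$ on all of $\mathcal{H}$ is then the closure of its restriction to $X$, essentially self-adjoint because it is a diagonal sum of bounded self-adjoint operators, and its domain is exactly the one written down in the statement. For $T$, each block operator $T_{n,k} = 1\otimes_{\nabla^{n,k}}\Dsla$ is the Dirac operator $\Dsla$ twisted by a finitely generated projective $C^1(M)$-module equipped with a Hermitian connection $\nabla^{n,k}$; such twisted Dirac operators are classically essentially self-adjoint on smooth (here $C^1$, with $\Dom\Dsla$) sections and have compact resolvent since $M$ is closed. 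Hence $T = \bigoplus_{n,k} T_{n,k}$ is essentially self-adjoint on $X$, being a diagonal sum of essentially self-adjoint operators, with core $X$ by construction.

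Next I would establish the anticommutation $DT + TD = 0$ on $X$. Since both operators preserve the block decomposition, it suffices to check this blockwise: on $L^2(M,\aXi_V^{n,k}\otimes S)$, $D$ acts as $\psi(n,k)\gamma\otimes 1$ and $T$ acts as $1\otimes_{\nabla^{n,k}}\Dsla$, and $\Dsla$ being odd means it anticommutes with the grading $\gamma$ on $S$. Here one must be slightly careful about how the grading on $S$ is transported through the tensor product $\aXi_V^{n,k}\otimes S$ and how $\nabla^{n,k}$ interacts with it — but because the grading lives entirely on the $S$ factor and the twisting module carries the trivial grading, the relation $\gamma\Dsla = -\Dsla\gamma$ passes through verbatim to each block. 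Thus $DT = -TD$ on $X$, and since $X$ is a core for both $D$ and $T$, the anticommutation extends appropriately.

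Finally, for $\D_E = D + T$: the key point is that $D$ and $T$ anticommute, so $\D_E^2 = D^2 + T^2$ on $X$ (the cross terms cancel). On the $(n,k)$-block this reads $\psi(n,k)^2 + T_{n,k}^2$, and since each $T_{n,k}^2 \geq 0$ and $\psi(n,k)^2 \to \infty$ as $(n,k)\to\infty$ while $T_{n,k}$ has compact resolvent on each (fixed, finite-rank-over-$C(M)$) block, the resolvent $(1+\D_E^2)^{-1}$ is a norm-convergent diagonal sum of compact operators with norms tending to zero, hence compact. Self-adjointness of $\D_E$ follows from the anticommuting-pair structure: $\D_E$ is essentially self-adjoint on $X$ because $D^2 + T^2$ dominates $\D_E^2$ and one can invoke the standard criterion for sums of anticommuting self-adjoint operators (as in \cite{BJ}, or directly via the block decomposition, where on each block $\D_E$ is a bounded perturbation $\psi(n,k)\gamma$ of the essentially self-adjoint $T_{n,k}$). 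I expect the main obstacle to be the bookkeeping around gradings and the precise sense in which $X$ is a common core — in particular verifying that the connections $\nabla^{n,k}$ constructed from the frames of Lemma \ref{globalframe} are genuinely $C^1$ and that $X$ is invariant enough for the algebraic manipulations $\D_E^2 = D^2 + T^2$ to be legitimate before taking closures; the analytic facts (essential self-adjointness and compact resolvent of twisted Dirac operators on a closed manifold) are standard and can be cited.
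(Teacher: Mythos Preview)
Your proposal is correct and follows essentially the same approach as the paper: exploit the block-diagonal structure, verify anticommutation blockwise via the grading $\gamma$ on $S$, and read off compact resolvent from the formula $(1+\D_E^2)^{-1}=\bigoplus_{n,k}(1+\psi(n,k)^2+T_{n,k}^2)^{-1}$. The only minor difference is that the paper cites \cite[Theorem 6.1.8]{MeslandCrelle} for essential self-adjointness of the anticommuting sum, whereas you suggest either \cite{BJ} or the more elementary blockwise observation that $\psi(n,k)\gamma+T_{n,k}$ is a bounded perturbation of the self-adjoint $T_{n,k}$; your alternative is perfectly valid and arguably simpler here.
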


\begin{proof} 
It is clear from their definitions that $X$ is a common core for $D$ and $T$.   
Both $T$ and $D$ respect the decomposition 
$\mathcal{H}=\bigoplus_{k,n} L^2(M, \aXi_V^{n,k}\otimes S)$ 
in the sense that $$D,T:\aXi^{n,k}_{C^{1}(M)}
\otimes^{\textnormal{alg}}_{C^{1}(M)} \Dom\Dsla\to\aXi^{n,k}_{C^{1}(M)}
\otimes^{\textnormal{alg}}_{C^{1}(M)} L^{2}(M,S).$$ 
In fact, $D$ maps $X$ into itself whereas $T$ maps $X$ into $\Dom D$. 
Therefore the anticommutator $DT+TD$ is defined on $X$ and is easily 
seen to vanish there. It then follows that  the sum $\D_E:=D+T$ is closed and $D+T$ is an essentially  
self-adjoint operator on the initial domain $X$, \cite[Theorem 6.1.8]{MeslandCrelle}. The resolvent of $\D_E^2$ can be written as 
$$
(1+ \D_E^2)^{-1}=\bigoplus_{k, n}\left(1+\psi(n,k)^2+T_{n,k}^2\right)^{-1}.
$$
For each $n,k$, $(1+\psi(n,k)^2+T_{n,k}^2)^{-1}$ is compact with 
$$\|(1+\psi(n,k)^2+T_{n,k}^2)^{-1}\|\leq (1+\psi(n,k)^2)^{-1}\to 0.$$
Therefore $(1+ \D_E^2)^{-1}$ is compact.
\end{proof}
In the sequel we will show that the commutators $[\D_{E},S_{\eta}\otimes 1]$ for $\eta\in\Gamma^{1}(M,V)$ are bounded 
assuming $\phi$ is compatible with the Riemannian metric (e.g. an isometry). From Lemma \ref{domainsandshit}, and by
checking the conditions of \cite[Theorem 13]{Kucerovsky}, we then deduce
the following theorem.

\begin{thm}
\label{kasppordvbmand}
Let $V\to M$ be a Hermitian vector bundle on a closed manifold, $\phi:M\to M$ an isometric $C^{1}$- diffeomorphism, 
$\D_E$ the operator constructed from a Dirac operator on $M$ as in Lemma \ref{domainsandshit} and 
$\mathcal{A}$ the dense $*$-subalgebra 
of $\O_{{}_{\alpha}E}$ generated by $S_{\eta}$ with $\eta \in \Gamma^1(M,V)$. 
The triple $(\A,\mathcal{H},\D_E)$ 
is a spectral triple for the Cuntz-Pimsner algebra $\O_{{}_{\alpha}E}$ representing the Kasparov 
product of the class of
$$
0\to \K_{C(M)}(\mathcal{F}_{{}_{\alpha}E})\to 
\T_{{}_{\alpha}E}\to 
\O_{{}_{\alpha}E}\to 0
$$
in $KK^1(\O_{{}_{\alpha}E},C(M))$ with $[\Dsla]\in KK^*(C(M),\C)$. The statements remain true 
if $\phi$ is a $C^1$ diffeomorphism such that for all $a\in \A$ there exists $C_a>0$ such that
$\sup_{\ell\in \Z}\Vert[\Dsla,\alpha^\ell(a)]\Vert\leq C_a$.
\end{thm}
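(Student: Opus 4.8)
The plan is to recognise $(\mathcal{A},\mathcal{H},\D_E)$ as the unbounded Kasparov product of the cycle $(\O_E,\aXi_{C(M)},\D)$ of Theorem \ref{definunbdd}, which represents the class $\extcls\in KK^1(\O_{{}_{\alpha}E},C(M))$ of the displayed extension, with the spectral triple $(C(M),L^2(M,S),\Dsla)$ representing $[\Dsla]\in KK^*(C(M),\C)$, and to verify this using \cite[Theorem 13]{Kucerovsky}. Lemma \ref{domainsandshit} already supplies self-adjointness of $\D_E=D+T$ and compactness of its resolvent, so what remains is: boundedness of $[\D_E,a]$ for $a$ in the generating set of $\mathcal{A}$; the domain condition $\Dom(\D_E)\subseteq\Dom(D)$; the positivity (local semiboundedness) condition relating $D$ and $\D_E$; and the connection condition for the creation operators.

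First I would treat the commutators. With $a=S_\eta\otimes1$ for $\eta\in\Gamma^1(M,V)$, I would split $[\D_E,S_\eta\otimes1]=[D,S_\eta\otimes1]+[T,S_\eta\otimes1]$. Since $D=\D\otimes\gamma$ and $\eta$ is homogeneous of degree one, Theorem \ref{definunbdd} makes $[D,S_\eta\otimes1]=[\D,S_\eta]\otimes\gamma$ bounded. The real content is $[T,S_\eta\otimes1]$: by Lemma \ref{lem:mu-n-r}, $S_\eta$ carries $\aXi^{n,k}_{C^{1}(M)}$ into $\aXi^{n+1,k}_{C^{1}(M)}$ (into $Q_{n+1,k}\aXi_{C(M)}$ when $n+k=0$), so $[T,S_\eta\otimes1]$ measures the failure of $S_\eta$ to intertwine the connections $\nabla^{n,k}$ and $\nabla^{n+1,k}$. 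Expanding $S_\eta$ on the frame $\{x_{\rho,\sigma}\}$ of Lemma \ref{globalframe}, and using that a connection commutator is a connection one-form, i.e.\ $\Dsla$ applied to coefficient functions followed by Clifford multiplication, I would write $[T,S_\eta\otimes1]$ as a finite sum of operators of the form $(\text{creation against a frame vector})\otimes[\Dsla,f]$, where each $f\in C^{1}(M)$ is assembled from $\eta$, the frame elements, and the $\alpha$-twisted inner products $\alpha^{-|\rho|}(e_\sigma|e_\rho)$ occurring in $x_{\rho,\sigma}$. The $C^1$-regularity of $\phi$ and of the frame makes each $[\Dsla,f]$ bounded, and uniform boundedness of the whole sum over all $(n,k)$ is precisely what $\sup_{\ell\in\Z}\Vert[\Dsla,\alpha^\ell(g)]\Vert<\infty$ provides; this in turn is guaranteed when $\phi$ is an isometry (cf.\ Proposition \ref{prop:better-diffeo}), since then conjugation by the induced unitary on $L^2(M,S)$ leaves $\Vert[\Dsla,\cdot]\Vert$ invariant.

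Next I would dispatch the domain and positivity conditions, which follow from the structure of $\D_E$. Since $D$ and $T$ anticommute on the common core $X$ of Lemma \ref{domainsandshit}, $\D_E^2=D^2+T^2$ on $X$, so $\Dom(\D_E)\subseteq\Dom(D)$ and the domain condition holds with $D$ in the role of the lift of $\D$. For the positivity condition I would compute, for $\xi\in X$, that $\langle D\xi,\D_E\xi\rangle+\langle\D_E\xi,D\xi\rangle=2\Vert D\xi\Vert^2+\langle\xi,(DT+TD)\xi\rangle=2\Vert D\xi\Vert^2\geq0$, so it holds with constant zero. For the connection condition of \cite[Theorem 13]{Kucerovsky} I would let $x$ range over the frame vectors $x_{\rho,\sigma}$, with creation operators $T_x\colon L^2(M,S)\to\mathcal{H}$, $\xi\mapsto x\otimes\xi$; the graded commutator of $T_x$ with $\D_E\oplus\Dsla$ reduces, by the expansion above, to the same uniform commutator bound, while boundedness of $x\mapsto T_x$ is the frame property of Lemma \ref{globalframe}.

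Finally, assembling the pieces, \cite[Theorem 13]{Kucerovsky} will identify the class of $(\mathcal{A},\mathcal{H},\D_E)$ with $\extcls\otimes_{C(M)}[\Dsla]$ in $KK^{1+*}(\O_{{}_{\alpha}E},\C)$, and since $[\D_E,a]$ is bounded for all $a\in\mathcal{A}$ this makes the triple a spectral triple in the stated sense. For the last sentence of the statement I would note that the only property of $\phi$ used above, beyond being a $C^1$-diffeomorphism, is the uniform bound $\sup_{\ell\in\Z}\Vert[\Dsla,\alpha^\ell(a)]\Vert\leq C_a$, so the argument applies verbatim under that weaker hypothesis. I expect the main obstacle to be exactly the uniform-in-$(n,k)$ estimate on $[T,S_\eta\otimes1]$ and the attendant connection condition: one must control the connection coefficients and the action of $\Dsla$ on them simultaneously across the infinitely many summands $\aXi^{n,k}_{C(M)}$, and this is what the equicontinuity-type hypothesis on $\phi$ is tailored to make possible.
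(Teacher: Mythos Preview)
Your proposal is correct and follows essentially the same approach as the paper: invoke Lemma \ref{domainsandshit} for self-adjointness and compact resolvent, reduce the commutator $[\D_E,S_\eta\otimes 1]$ to $[T,S_{e_\iota}\otimes 1]$ expanded on the frame $x_{\rho,\sigma}$ as a sum of terms of the form $x\otimes[\Dsla,\alpha^{m}(\cdot)]$, control these uniformly via the equicontinuity hypothesis, and then check Kucerovsky's conditions. The paper carries out the frame expansion for $[T,S_{e_\iota}\otimes 1]$ explicitly (Lemmas \ref{framecommute}--\ref{forgettrivialbundles} and Proposition \ref{prop:better-diffeo}) while leaving the verification of the Kucerovsky conditions implicit; you make the Kucerovsky conditions explicit and leave the frame computation as a sketch, but the argument is the same.
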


\subsection{Proof of Theorem \ref{kasppordvbmand}}
We turn to the proof of Theorem \ref{kasppordvbmand} by proving boundedness of 
the commutators $[\D_{E},S_{\eta}\otimes 1]$ for $\eta\in\Gamma^{1}(M,V)$.
In the special case when the isometric diffeomorphism $\phi$ is the identity, boundedness 
of the commutators $[\D_{E},S_{\eta}\otimes 1]$ can be proved by a quick geometric argument. 
We prove the general case of a general isometric $C^{1}$-diffeomorphism 
$\phi:M\to M$ directly using the frame in Lemma \ref{globalframe}. 
To this end, we first establish some algebraic relations, describing the interaction of the algebra $C(M)$ and the operators $S_{e_{\iota}}$ with the global frame $x_{\rho,\sigma}$ constructed in Lemma \ref{globalframe}.

\begin{lemma}
\label{framecommute} 
For $a\in C(M)$ we have the identity
$ax_{\rho,\sigma}=x_{\rho,\sigma}\alpha^{|\rho|-|\sigma|}(a)$.
\end{lemma}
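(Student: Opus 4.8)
The plan is to reduce the statement to a single commutation relation for creation operators: for every multi-index $\rho$ and every $a\in C(M)$ one has $a\,S_\rho = S_\rho\,\alpha^{|\rho|}(a)$ in $\O_{\aE}$. I would prove this by induction on $|\rho|$. The base case $|\rho|=1$ reads $aS_e=S_e\,\alpha(a)$, which holds because $aS_e=S_{a\cdot e}=S_{\alpha(a)e}$ while $S_e\,\alpha(a)=S_{e\cdot\alpha(a)}$, and $\alpha(a)e=e\,\alpha(a)$ since multiplying a section of $V$ by a function of $C(M)$ is independent of the side. For the inductive step I would factor $S_\rho=S_{e_{\rho_1}}S_{\rho''}$ with $|\rho''|=|\rho|-1$ and compute $aS_\rho=(aS_{e_{\rho_1}})S_{\rho''}=S_{e_{\rho_1}}\,\alpha(a)\,S_{\rho''}$, then apply the inductive hypothesis to $\alpha(a)$ to move it to the right past $S_{\rho''}$ at the cost of $\alpha^{|\rho|-1}$. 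Taking adjoints, and using that $\alpha$ is a $\ast$-automorphism, then gives the companion relation $a\,S_\rho^{*}=S_\rho^{*}\,\alpha^{-|\rho|}(a)$.

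Next I would combine the two relations inside $\O_{\aE}$, hence inside $\aXi_{C(M)}$: for homogeneous $\mu,\nu\in\algFock$ and $a\in C(M)$,
\[
a\,W_{\mu,\nu}=[aS_\mu S_\nu^{*}]=[S_\mu\,\alpha^{|\mu|}(a)\,S_\nu^{*}]=[S_\mu S_\nu^{*}\,\alpha^{|\mu|-|\nu|}(a)]=W_{\mu,\nu}\,\alpha^{|\mu|-|\nu|}(a),
\]
so on the degree-$(|\mu|-|\nu|)$ part of $\aXi_{C(M)}$ left multiplication by $a$ equals right multiplication by $\alpha^{|\mu|-|\nu|}(a)$. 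In particular $W_{e_\rho,e_\sigma}$ and $W_{e_{\underline\rho},e_{\underline\sigma}}$ obey this with the \emph{same} exponent $|\rho|-|\sigma|$, which is exactly the matching of degrees needed for the three-term formula defining $x_{\rho,\sigma}$ in Lemma~\ref{globalframe}.

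Finally I would run this through the three cases of Lemma~\ref{globalframe}. For $|\sigma|=0$ and for $|\rho|=0$ the frame vector is a scalar multiple of a single $W_{\mu,\nu}$, and the claim is immediate from the displayed identity, noting that $|\rho|-|\sigma|$ equals $|\rho|$, resp. $-|\sigma|$. In the generic case $|\rho|,|\sigma|>0$, the coefficient $\alpha^{-|\rho|}\big((e_{\sigma_{|\sigma|}}|e_{\rho_{|\rho|}})\big)$ is an element of $C(M)$, hence commutes with $a$ as a left multiplier on $\aXi_{C(M)}$ because the left action of the commutative algebra $C(M)\subseteq\O_{\aE}$ on the module is abelian; applying the displayed relation to both summands of $x_{\rho,\sigma}$ and factoring $\alpha^{|\rho|-|\sigma|}(a)$ out on the right yields $a\,x_{\rho,\sigma}=x_{\rho,\sigma}\,\alpha^{|\rho|-|\sigma|}(a)$.

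The computation is essentially bookkeeping, and I do not expect a real obstacle: the only point requiring care is tracking the power of $\alpha^{\pm1}$ picked up each time a function of $C(M)$ is moved past a creation operator or its adjoint, together with the observation that both summands of $x_{\rho,\sigma}$ lie in the same $\Z$-degree, so the identical twist is produced in each.
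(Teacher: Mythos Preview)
Your proof is correct and follows essentially the same approach as the paper: establish $aW_{\mu,\nu}=W_{\mu,\nu}\,\alpha^{|\mu|-|\nu|}(a)$ from the commutation of $a$ with $S_\mu S_\nu^*$, then observe that both summands of $x_{\rho,\sigma}$ lie in the same $\Z$-degree (since $|\rho|-|\sigma|=|\underline{\rho}|-|\underline{\sigma}|$), so the relation passes to $x_{\rho,\sigma}$. Your version is somewhat more explicit in deriving the underlying relation $aS_\rho=S_\rho\,\alpha^{|\rho|}(a)$ by induction and in handling the scalar coefficient in the generic case, but the argument is the same.
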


\begin{proof} 
The relation is obtained from the corresponding relations for $S_{\rho,\sigma}$ by writing
\[a W_{\rho,\sigma}=a S_{\rho,\sigma}W_{\emptyset,\emptyset}=
S_{\rho,\sigma}\alpha^{|\rho|-|\sigma|}(a)W_{\emptyset,\emptyset}=
S_{\rho,\sigma}W_{\emptyset,\emptyset}\alpha^{|\rho|-|\sigma|}(a)=
W_{\rho,\sigma}\alpha^{|\rho|-|\sigma|}(a),\]
and then using that $|\rho|-|\sigma|=|\underline{\rho}|-|\underline{\sigma}|$ so that the relation 
passes to the $x_{\rho,\sigma}$ in all cases.
\end{proof}
 
\begin{lemma}
\label{Srelations}
For $|\iota|=1$ we have the relations:
\[S_{e_{\iota}}x_{\rho,\sigma}=
\left\{\begin{matrix} x_{\iota,\sigma}+N^{-1/2}\alpha^{-1}(e_{\sigma_{|\sigma|}}|e_{\iota})x_{\emptyset,\underline{\sigma}} & |\rho|=0 \\ x_{\iota\rho,\sigma}& |\rho|>0\end{matrix}\right. \quad S_{e_{\iota}}^{*}x_{\rho,\sigma}=\left\{\begin{matrix} N^{-1/2}x_{\emptyset,\sigma\iota} & |\rho|=0\\
(e_{\iota}|e_{\rho})x_{\emptyset,\sigma}-N^{-1}(e_{\sigma_{|\sigma|}}|e_{\rho})x_{\emptyset,\underline{\sigma}\iota} & |\rho|=1 \\
(e_{\iota}| e_{\rho_{1}}) x_{\overline{\rho},\sigma} & |\rho|>1\end{matrix}\right.\]
with the convention that $e_{\emptyset_{|\emptyset|}}=0$.
\end{lemma}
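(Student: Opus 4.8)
The plan is to verify the two relations of the lemma, comprising two and three case-formulae respectively, by a direct computation in $\O_{\aE}$. The only structural inputs needed are the Cuntz--Pimsner relations $S_e^{*}S_f=(e|f)$, $aS_e=S_{a\cdot e}$ and $S_ea=S_{e\cdot a}$ for $e,f\in\aE$, $a\in C(M)$, the explicit shape of the frame vectors $x_{\rho,\sigma}$ furnished by Lemma~\ref{globalframe}, and Lemma~\ref{framecommute} for interchanging the left and right $C(M)$-actions on a frame vector. Two bookkeeping facts are used throughout: the normalisations $W_{\emptyset,\sigma}=N^{-|\sigma|/2}x_{\emptyset,\sigma}$ and $W_{\rho,\emptyset}=x_{\rho,\emptyset}$ (immediate from Lemma~\ref{globalframe}), and the observation that the defining formula of Lemma~\ref{globalframe}, read backwards, lets one recognise a difference $N^{|\sigma|/2}W_{\rho,\sigma}-(\text{correction in strictly shorter words})$ as a single $x_{\rho,\sigma}$.

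First I would dispose of the cases in which $S_{e_\iota}$ or $S_{e_\iota}^{*}$ interacts only with a letter of the word $\rho$. For $S_{e_\iota}$ with $|\rho|>0$, applying $aS_e=S_{a\cdot e}$ to the two terms of $x_{\rho,\sigma}$ yields $N^{|\sigma|/2}W_{\iota\rho,\sigma}$ minus a correction; using $\underline{\iota\rho}=\iota\underline{\rho}$, $(\iota\rho)_{|\iota\rho|}=\rho_{|\rho|}$, and the fact that prepending $e_\iota$ conjugates the scalar $\topop$-coefficient of that correction by one power of $\alpha$ (via $e_\iota\cdot a=\alpha^{-1}(a)\cdot e_\iota$ and Lemma~\ref{framecommute}), this is recognised as $x_{\iota\rho,\sigma}$. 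For $S_{e_\iota}^{*}$ with $|\rho|>1$ the operator cancels against $e_{\rho_1}$, extracting the scalar $(e_\iota|e_{\rho_1})$ on the left, and the bracketed remainder is identified with $x_{\overline{\rho},\sigma}$ after using $\underline{\overline{\rho}}=\overline{\underline{\rho}}$ and $\overline{\rho}_{|\overline{\rho}|}=\rho_{|\rho|}$. The case $|\rho|=0$ for $S_{e_\iota}^{*}$, and every $|\sigma|=0$ sub-case, then follow at once from the normalisations above together with $S_{e_\iota}^{*}S_\mu^{*}=S_{\mu\otimes e_\iota}^{*}$.

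The remaining cases --- $S_{e_\iota}$ on $x_{\emptyset,\sigma}$ and $S_{e_\iota}^{*}$ on $x_{\rho,\sigma}$ with $|\rho|=1$ --- are where the actual work lies, and where I expect the main obstacle. Here the operator has no letter of $\rho$ to meet, so it acts on the scalar $\topop$-coefficient of the correction term of $x_{\rho,\sigma}$: for $S_{e_\iota}^{*}$ this uses $S_{e_\iota}^{*}a=S_{a^{*}\cdot e_\iota}^{*}$, which turns that coefficient into a new \emph{last} letter of the $\nu$-word and is the source of the extra summand $-N^{-1}(e_{\sigma_{|\sigma|}}|e_\rho)\,x_{\emptyset,\underline{\sigma}\iota}$. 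The crux is then to carry the section $a^{*}\cdot e_\iota=(e_\rho|e_{\sigma_{|\sigma|}})e_\iota$ back to the front of the word and to recognise the outcome as a scalar multiple of $x_{\emptyset,\underline{\sigma}\iota}$; this requires tracking that the left action, the right action, and the pointwise product with a function all act on a factor of $\aE^{\otimes\ell}$ so as to differ by powers of $\alpha$, and that each hop of a coefficient across a tensor factor, as well as each conversion between the two module actions on a frame vector, contributes exactly one such power, all governed by Lemma~\ref{framecommute}. Once this $\alpha$-twist bookkeeping is set up carefully the powers of $N$ and of $\alpha$ cancel as claimed, and the convention $e_{\emptyset_{|\emptyset|}}=0$ absorbs the degenerate sub-cases into the same formulae.
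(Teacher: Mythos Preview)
Your proposal is correct and follows the same route as the paper: direct computation using the Cuntz--Pimsner relations and the explicit form of $x_{\rho,\sigma}$ from Lemma~\ref{globalframe}, recognising the outcome term by term. You somewhat overstate the difficulty of the two ``hard'' cases: in the paper each is a two-line calculation. For $S_{e_\iota}x_{\emptyset,\sigma}$ one simply has $S_{e_\iota}N^{|\sigma|/2}W_{\emptyset,\sigma}=N^{|\sigma|/2}W_{\iota,\sigma}$ and then adds and subtracts the correction term of $x_{\iota,\sigma}$ (there is no correction term in $x_{\emptyset,\sigma}$ itself to act on). For $S_{e_\iota}^{*}$ with $|\rho|=1$ one applies $S_{e_\iota}^{*}$ separately to the two summands of $x_{\rho,\sigma}$, using $S_{e_\iota}^{*}S_{e_\rho}=(e_\iota|e_\rho)$ on the first and the already-established $|\rho|=0$ relation on the second; the elaborate $\alpha$-bookkeeping you anticipate collapses to a single use of the bimodule relation.
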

\begin{proof} For the operator $S_{e_\iota}$, the action on $x_{\rho,\sigma}$ for $|\rho|>0$ is straightforward to check. 
For $|\rho|=0$ we compute
\begin{align*}
S_{e_{\iota}}x_{\emptyset,\sigma}&=S_{e_{\iota}}N^{|\sigma|/2}W_{\emptyset,\sigma}
=N^{|\sigma|/2}W_{\iota,\sigma}\\ 
&=N^{|\sigma|/2}W_{\iota,\sigma}\!-N^{|\sigma|/2-1}\alpha^{-1}(e_{\sigma_{|\sigma|}}|e_{\iota})W_{\emptyset,\underline{\sigma}} 
+N^{|\sigma|/2-1}(e_{\sigma_{|\sigma|}}|e_{\iota})W_{\emptyset,\underline{\sigma}}\!=
x_{\iota,\sigma}\!+N^{-1/2}\alpha^{-1}(e_{\sigma_{|\sigma|}}|e_{\iota})x_{\emptyset,\underline{\sigma}}.
\end{align*}
For $S_{e_{\iota}}^{*}$, the relations for $|\rho|=0$ and $|\rho|>1$ are straightforward to check. For $|\rho|=1$ we compute
\begin{align*} 
S_{e_{\iota}}^{*}x_{\rho,\sigma}&=S_{e_{\iota}}^{*}(N^{|\sigma|/2}W_{\rho,\sigma}-N^{|\sigma|/2-1}(e_{\sigma_{|\sigma|}}|e_{\rho})W_{\emptyset,\underline{\sigma}})\\
&=N^{|\sigma|/2}(e_{\iota}|e_{\rho})W_{\emptyset,\sigma}-N^{|\sigma|/2-1}(e_{\sigma_{|\sigma|}}|e_{\rho})W_{\emptyset,\underline{\sigma}\iota}\\
&=(e_{\iota}|e_{\rho})x_{\emptyset,\sigma}-N^{-1}(e_{\sigma_{|\sigma|}}|e_{\rho})x_{\emptyset,\underline{\sigma}\iota}, 
\end{align*}
as claimed.
\end{proof}

\begin{lemma}
\label{crackerbewdycorkerripperbonzagrouse} 
The following relations hold:\newline\newline
1.) $\sum_{|\lambda|=1}x_{\lambda\rho,\sigma}\alpha^{|\rho|-|\sigma|}(e_{\lambda}|e_{\iota})
=x_{\iota\rho,\sigma}$ for all $\rho$;\newline
2.) $\sum_{|\lambda|=1} x_{\emptyset,\sigma\lambda}\alpha^{-|\sigma|-1}(e_{\iota} | e_{\lambda})
=x_{\emptyset,\sigma\iota}$;\newline
3.) $\sum_{|\lambda|=1} x_{\lambda,\sigma\lambda}=0$.
\end{lemma}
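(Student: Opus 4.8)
The plan is to prove the three identities by lifting every expression to the algebra $\O_{\aE}$ and appealing only to standard relations together with the explicit frame vectors of Lemma~\ref{globalframe}. The ingredients are: the description $x_{\rho,\sigma}=P_{n,k}\bigl(N^{k/2}W_{\rho,\sigma}\bigr)$ underlying the proof of Lemma~\ref{globalframe}, where $n=|\rho|-|\sigma|$ and $k=|\sigma|$; the Cuntz-Pimsner relation $\sum_{|\lambda|=1}S_{e_{\lambda}}S_{e_{\lambda}}^{*}=1$; the elementary identities $S_{e_{\iota}}^{*}S_{e_{\lambda}}=(e_{\iota}|e_{\lambda})$, $a\,S_{e}=S_{ae}$ and $S_{\mu\otimes\nu}=S_{\mu}S_{\nu}$ in $\O_{\aE}$; the commutation rule $a\,W_{\mu,\nu}=W_{\mu,\nu}\,\alpha^{|\mu|-|\nu|}(a)$, which is exactly what is established for the $W$'s inside the proof of Lemma~\ref{framecommute}; the left-frame identity $\sum_{|\lambda|=1}{}_{C(M)}(e|e_{\lambda})\,e_{\lambda}=e$; and the value $\sum_{|\lambda|=1}{}_{C(M)}(e_{\lambda}|e_{\lambda})=\mathrm{e}^{\beta_{1}}=N$. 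Since each $P_{n,k}$ is $C(M)$-linear, the common recipe in each case is: write $x_{\bullet,\bullet}=P_{n,k}\bigl(N^{k/2}W_{\bullet,\bullet}\bigr)$, transport the scalar standing to the right of $W_{\bullet,\bullet}$ to its left via the commutation rule, collapse the resulting sum over $|\lambda|=1$ using either the Cuntz-Pimsner relation or the left-frame identity, and finally re-apply $P_{n,k}$.

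For identity~1, put $n:=|\rho|-|\sigma|$, $k:=|\sigma|$; then $|\lambda\rho|-|\sigma|=n+1$, so every $x_{\lambda\rho,\sigma}$ and $x_{\iota\rho,\sigma}$ lies in $P_{n+1,k}\,\aXi_{C(M)}$ and $x_{\lambda\rho,\sigma}=P_{n+1,k}\bigl(N^{k/2}W_{\lambda\rho,\sigma}\bigr)$, with $W_{\lambda\rho,\sigma}$ of degree $n+1$. Transporting $\alpha^{n}(e_{\lambda}|e_{\iota})$ through $W_{\lambda\rho,\sigma}$ turns it into $\alpha^{-1}(e_{\lambda}|e_{\iota})={}_{C(M)}(e_{\iota}|e_{\lambda})$, so the left-hand side equals $P_{n+1,k}\,N^{k/2}\sum_{|\lambda|=1}{}_{C(M)}(e_{\iota}|e_{\lambda})\,S_{e_{\lambda}}S_{e_{\rho}}S_{e_{\sigma}}^{*}$; using $b\,S_{e_{\lambda}}=S_{b e_{\lambda}}$ and $\sum_{|\lambda|=1}{}_{C(M)}(e_{\iota}|e_{\lambda})\,e_{\lambda}=e_{\iota}$ this collapses to $P_{n+1,k}\,N^{k/2}W_{\iota\rho,\sigma}=x_{\iota\rho,\sigma}$. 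For identity~2, $W_{\emptyset,\sigma\lambda}$ has degree $-(|\sigma|+1)$ and $x_{\emptyset,\sigma\lambda}=N^{(|\sigma|+1)/2}S_{e_{\sigma\lambda}}^{*}$, so transporting $\alpha^{-|\sigma|-1}(e_{\iota}|e_{\lambda})$ to the left yields the unlabelled $(e_{\iota}|e_{\lambda})=S_{e_{\iota}}^{*}S_{e_{\lambda}}$; writing $S_{e_{\sigma\lambda}}^{*}=S_{e_{\lambda}}^{*}S_{e_{\sigma}}^{*}$ and summing over $|\lambda|=1$ via $\sum_{|\lambda|=1}S_{e_{\lambda}}S_{e_{\lambda}}^{*}=1$ produces $N^{(|\sigma|+1)/2}S_{e_{\iota}}^{*}S_{e_{\sigma}}^{*}=N^{(|\sigma|+1)/2}S_{e_{\sigma\iota}}^{*}=x_{\emptyset,\sigma\iota}$.

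For identity~3, each $x_{\lambda,\sigma\lambda}$ with $|\lambda|=1$ falls in the generic case of Lemma~\ref{globalframe}: since the last letter of $\lambda$ equals the last letter of $\sigma\lambda$, $\underline{\lambda}=\emptyset$ and $\underline{\sigma\lambda}=\sigma$, one has
\[
x_{\lambda,\sigma\lambda}=N^{(|\sigma|+1)/2}W_{\lambda,\sigma\lambda}-N^{(|\sigma|-1)/2}\,\alpha^{-1}\bigl((e_{\lambda}|e_{\lambda})\bigr)W_{\emptyset,\sigma}.
\]
Summing the first terms, $S_{e_{\sigma\lambda}}^{*}=S_{e_{\lambda}}^{*}S_{e_{\sigma}}^{*}$ together with $\sum_{|\lambda|=1}S_{e_{\lambda}}S_{e_{\lambda}}^{*}=1$ gives $\sum_{|\lambda|=1}W_{\lambda,\sigma\lambda}=S_{e_{\sigma}}^{*}=W_{\emptyset,\sigma}$; summing the second terms, $\sum_{|\lambda|=1}\alpha^{-1}\bigl((e_{\lambda}|e_{\lambda})\bigr)=\sum_{|\lambda|=1}{}_{C(M)}(e_{\lambda}|e_{\lambda})=\mathrm{e}^{\beta_{1}}=N$ gives $N^{(|\sigma|+1)/2}W_{\emptyset,\sigma}$. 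The two contributions cancel, so $\sum_{|\lambda|=1}x_{\lambda,\sigma\lambda}=0$.

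None of this is conceptually deep; the only point that genuinely requires care is bookkeeping the power of $\alpha$ — always $|\mu|-|\nu|$ — attached to each $W_{\mu,\nu}$ when a scalar is moved from one side of it to the other, and, for identity~3, correctly reading off the correction term in the generic frame vector and recognising $\sum_{|\lambda|=1}{}_{C(M)}(e_{\lambda}|e_{\lambda})$ as the Jones-Watatani index $\mathrm{e}^{\beta_{1}}=N$. I anticipate no real obstacle.
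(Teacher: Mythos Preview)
Your proof is correct and follows essentially the same route as the paper's. The only notable difference is in part~1: where the paper expands $x_{\lambda\rho,\sigma}$ via the explicit three-case formula of Lemma~\ref{globalframe} and therefore splits into the subcases $|\rho|>0$ and $|\rho|=0$, you instead invoke the uniform description $x_{\rho,\sigma}=P_{n,k}\bigl(N^{k/2}W_{\rho,\sigma}\bigr)$ together with the right $C(M)$-linearity of $P_{n,k}$, which handles both cases at once. For part~2 you collapse the sum via the Cuntz--Pimsner relation $\sum_{|\lambda|=1}S_{e_\lambda}S_{e_\lambda}^*=1$, whereas the paper moves the scalar into the second subscript and uses the right-frame relation $\sum_\lambda e_\lambda(e_\lambda|e_\iota)=e_\iota$ directly; these are of course the same identity read in $\O_{\aE}$ versus in $E$. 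Part~3 is argued identically in both.
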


\begin{proof} 
The identities all rely on the frame relation. For 1.) and $|\rho|>0$ :
\begin{align*}\sum_{|\lambda|=1} x_{\lambda\rho,\sigma}\alpha^{|\rho|-|\sigma|}(e_{\lambda} | e_{\iota})&= 
\sum_{|\lambda|=1} N^{|\sigma|/2}W_{\lambda\rho,\sigma}\alpha^{|\rho|-|\sigma|}(e_{\lambda} | e_{\iota})-N^{|\sigma|/2-1}\alpha^{-|\rho|-1}(e_{\sigma_{|\sigma|}}|e_{\lambda})W_{\lambda\underline{\rho},\underline{\sigma}}\alpha^{|\underline{\rho}|-|\underline{\sigma}|}(e_{\lambda} | e_{\iota})\\
&= N^{|\sigma|/2}W_{\iota\rho,\sigma}-N^{|\sigma|/2-1}\alpha^{-|\rho|-1}(e_{\sigma_{|\sigma|}}|e_{\lambda})W_{\iota\underline{\rho},\underline{\sigma}}=x_{i\rho,\sigma}
\end{align*}
and for $|\rho|=0$:
\begin{align*}\sum_{|\lambda|=1} x_{\lambda,\sigma}\alpha^{|\rho|-|\sigma|}(e_{\lambda} | e_{\iota}) & = 
\sum_{|\lambda|=1} N^{|\sigma|/2}W_{\lambda,\sigma}\alpha^{-|\sigma|}(e_{\lambda} | e_{\iota})-N^{|\sigma|/2-1}\alpha^{-1}(e_{\sigma_{|\sigma|}}|e_{\lambda})W_{\emptyset,\underline{\sigma}}\alpha^{-|\sigma|}(e_{\lambda} | e_{\iota})\\
&=N^{|\sigma|/2}W_{\iota,\sigma}-\sum_{|\lambda|=1}N^{|\sigma|/2-1}\alpha^{-1}((e_{\sigma_{|\sigma|}}|e_{\lambda})(e_{\lambda}|e_{\iota}))W_{\emptyset,\underline{\sigma}}\\
&=N^{|\sigma|/2}W_{\iota,\sigma}-N^{|\sigma|/2-1}\alpha^{-1}((e_{\sigma_{|\sigma|}}|e_{\iota}))W_{\emptyset,\underline{\sigma}}=x_{\iota,\sigma}.
\end{align*}
Identity 2.) relies on similar considerations, observing that
\[\sum_{|\lambda|=1} x_{\emptyset,\sigma\lambda}\alpha^{-|\sigma|-1}(e_{\iota}|e_{\lambda})=\sum_{|\lambda|=1} x_{\emptyset,\sigma\lambda(e_{\lambda}|e_{\iota})}=x_{\emptyset,\sigma\iota}.\]
For 3.) we also use $\alpha$-invariance of the Jones-Watatani index:
\begin{align*}\sum_{|\lambda|=1} x_{\lambda,\sigma\lambda}&=\sum_{|\lambda|=1}N^{(|\sigma|+1)/2}W_{\lambda,\sigma\lambda}-N^{(|\sigma|-1)/2}\alpha^{-1}(e_\lambda|e_\lambda)W_{\emptyset,\sigma}\\
&=N^{(|\sigma|+1)/2}(\sum_{|\lambda|=1}W_{\lambda,\sigma\lambda})-N^{(|\sigma|-1)/2}\alpha^{-1}(\sum_{|\lambda|=1}(e_\lambda|e_\lambda))W_{\emptyset,\sigma}\\&
=N^{(|\sigma|+1)/2}W_{\emptyset,\sigma}-N^{(|\sigma|-1)/2}N W_{\emptyset,\sigma}
=0.\qedhere
\end{align*}
\end{proof}

For $C^{1}(M,V)\subset E$, the $C^{1}(M)$-submodule of $C^{1}$-sections of $V$, the tensor products $C^{1}(M,V)^{\otimes \ell}$ are understood to be algebraic tensor products balanced over the action of $C^{1}(M)$ through $\alpha$. It is then automatic that for $f,g\in C^{1}(M,V)^{\otimes \ell}$ it holds that $(f|g)\in C^{1}(M)$.

\begin{lemma}
\label{forgettrivialbundles}
For $|\iota|=1$, $\mu\in C^{1}(M,V)^{\otimes |\mu|},\nu\in C^{1}(M,V)^{\otimes |\nu|}$ and $\xi\in \Dom \Dsla$, we have the idenitity
\begin{align}[T,&S_{e_{\iota}}\otimes 1] W_{\mu,\nu}\otimes \xi \label{commutator}=\\ &=\sum_{|\lambda|=1}\sum_{|\rho|>0,\sigma} x_{\lambda\rho,\sigma}\otimes[\Dsla,\alpha^{|\rho|-|\sigma|}(e_{\lambda}|e_{\iota})](x_{\rho,\sigma}|W_{\mu,\nu})\xi
+\sum_{\sigma}N^{-1/2}x_{\emptyset,\underline{\sigma}}\otimes [\Dsla,\alpha^{-|\sigma|}(e_{\sigma_{|\sigma|}}|e_{\iota})] (x_{\emptyset,\sigma} | W_{\mu,\nu})\xi\nonumber\\
&\quad+\sum_{|\lambda|=1}\sum_{\sigma}x_{\lambda,\sigma}\otimes [\Dsla,\alpha^{-|\sigma|}(e_{\lambda}|e_{\iota})] (x_{\emptyset,\sigma} | W_{\mu\nu})\xi - N^{-1} x_{\lambda,\sigma}\otimes[\Dsla,\alpha^{-|\sigma|}(e_{\lambda}|e_{\sigma_{|\sigma|}})] (  x_{\emptyset,\underline{\sigma}\iota} | W_{\mu,\nu})\xi\nonumber
\end{align}
\end{lemma}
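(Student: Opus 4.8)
The plan is to compute $[T, S_{e_\iota}\otimes 1]$ directly on the frame vectors $W_{\mu,\nu}\otimes\xi$ by expanding both operators in terms of the global frame $(x_{\rho,\sigma})$ from Lemma \ref{globalframe}. The key structural input is that $T$ is built from the Grassmann (frame) connections $\nabla^{n,k}$ on each $\aXi^{n,k}_{C^1(M)}$, so that for a frame element $x_{\rho,\sigma}$ and $\xi\in\Dom\Dsla$ one has $T(x_{\rho,\sigma}\otimes\xi)=x_{\rho,\sigma}\otimes\Dsla\xi$, while on a general element $z=\sum_{\rho,\sigma} x_{\rho,\sigma}(x_{\rho,\sigma}|z)$ the Leibniz rule gives $T(z\otimes\xi)=\sum x_{\rho,\sigma}\otimes\Dsla\big((x_{\rho,\sigma}|z)\xi\big)$. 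Thus $[T, S_{e_\iota}\otimes 1](z\otimes\xi)$ will pick up exactly the commutator terms $[\Dsla, (\cdot|\cdot)]$ coming from the difference between expanding $z$ in the frame and then applying $S_{e_\iota}$, versus applying $S_{e_\iota}$ first and then expanding $S_{e_\iota}z$ in the frame.

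The concrete steps: First I would write $W_{\mu,\nu}=\sum_{\rho,\sigma} x_{\rho,\sigma}(x_{\rho,\sigma}|W_{\mu,\nu})$, apply $S_{e_\iota}\otimes 1$, and use Lemma \ref{Srelations} to re-express each $S_{e_\iota}x_{\rho,\sigma}$ back in terms of frame vectors; here one must separately handle $|\rho|=0$, $|\rho|=1$ and $|\rho|>1$. Next, apply $T$ to the resulting sum using the Leibniz formula above; the coefficients $(x_{\rho,\sigma}|W_{\mu,\nu})$ lie in $C^1(M)$ (the module elements are built from $C^1$-sections), so $\Dsla$ has bounded commutator with them and one splits $\Dsla((x_{\rho,\sigma}|W_{\mu,\nu})\xi)$ into a "connection-transported" piece and a commutator piece. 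Then, compute $T$ applied after $S_{e_\iota}$ via the same bookkeeping. The diagonal/transported pieces must cancel between the two orders: this is where identities 1.), 2.), 3.) of Lemma \ref{crackerbewdycorkerripperbonzagrouse} and the commutation relation $a x_{\rho,\sigma}=x_{\rho,\sigma}\alpha^{|\rho|-|\sigma|}(a)$ of Lemma \ref{framecommute} are invoked — they express precisely that $S_{e_\iota}$ intertwines the frame connections up to the $\alpha$-twist, so that the only surviving terms are those containing an explicit $[\Dsla, \alpha^m(e_\lambda|e_\iota)]$. What remains after the cancellation is then collected into the three groups on the right-hand side of \eqref{commutator}, according to whether the surviving frame index has $|\rho|>0$ (first line), comes from the $N^{-1/2}$-correction term in $S_{e_\iota}x_{\emptyset,\sigma}$ (second line), or comes from the $|\rho|=1$ case of $S_{e_\iota}^*$ acting inside (third line).

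The main obstacle is purely combinatorial: keeping track of the three cases in both Lemma \ref{Srelations} and the frame-expansion, and verifying that the "bulk" terms telescope/cancel using Lemma \ref{crackerbewdycorkerripperbonzagrouse}. In particular identity 3.) ($\sum_{|\lambda|=1} x_{\lambda,\sigma\lambda}=0$) is what kills a spurious term arising when $S_{e_\iota}$ lands in the $|\rho|=0$ sector and then $S_{e_\iota}^\ast$-type corrections reappear. Once all cancellations are done the formula \eqref{commutator} is just the surviving remainder, and I would present this as a (somewhat lengthy but mechanical) verification rather than isolating it as a separate lemma chain. The payoff, realized in the subsequent boundedness argument, is that each surviving term has an operator coefficient of the form $[\Dsla,\alpha^m(e_\lambda|e_\iota)]$ with $e_\lambda, e_\iota\in C^1(M,V)$, which is bounded uniformly in $m$ under the isometry (or "almost isometry") hypothesis on $\phi$, while the frame vectors $x_{\cdot,\cdot}$ assemble into bounded operators by the frame property.
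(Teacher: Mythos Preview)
Your proposal is correct and follows essentially the same route as the paper: expand in the frame $(x_{\rho,\sigma})$, use the Grassmann--connection formula $T(z\otimes\xi)=\sum x_{\rho,\sigma}\otimes\Dsla\big((x_{\rho,\sigma}|z)\xi\big)$, invoke Lemmas~\ref{framecommute}, \ref{Srelations}, \ref{crackerbewdycorkerripperbonzagrouse} to make the transported (non-commutator) pieces cancel, and collect the surviving $[\Dsla,\alpha^{m}(\cdot|\cdot)]$ terms. One small streamlining the paper uses, which you do not mention explicitly: for the $T\circ(S_{e_\iota}\otimes 1)$ half it passes directly to the adjoint via $(x_{\rho,\sigma}\,|\,S_{e_\iota}W_{\mu,\nu})=(S_{e_\iota}^{*}x_{\rho,\sigma}\,|\,W_{\mu,\nu})$, so that the $S_{e_\iota}^{*}$-formulas of Lemma~\ref{Srelations} (where the case split $|\rho|=0,1,>1$ actually lives) are applied once, instead of expanding $W_{\mu,\nu}$ in the frame, pushing $S_{e_\iota}$ through, and re-expanding. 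This avoids a second frame reconstruction and makes the three groups on the right of \eqref{commutator} appear naturally from the three $S_{e_\iota}^{*}$-cases; your version would of course reach the same endpoint after an extra round of bookkeeping.
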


\begin{proof} 
We let $[T,S_{e_{\iota}}\otimes1]$ act on $W_{\mu,\nu}$ and compute:
\begin{align}\nonumber [T,S_{e_{\iota}}\otimes1] W_{\mu,\nu}\otimes \xi &= 
\sum_{\rho,\sigma} x_{\rho,\sigma}\otimes \Dsla(S_{e_{\iota}}^{*}x_{\rho,\sigma} | W_{\mu,\nu}) \xi - S_{e_{\iota}}x_{\rho,\sigma}\otimes\Dsla(x_{\rho,\sigma}| W_{\mu,\nu})\xi\\
&\label{decentpart1}
=\sum_{|\rho|>1,\sigma} x_{\rho,\sigma}\otimes\Dsla(S_{e_{\iota}}^{*}x_{\rho,\sigma}|W_{\mu,\nu})\otimes\xi\\
&\quad
\label{datcrapagain}
+\sum_{|\lambda|=1}\sum_{\sigma}x_{\lambda,\sigma}\otimes\Dsla(S_{e_{\iota}}^{*}x_{\lambda,\sigma} | W_{\mu,\nu}) \xi\\
&\quad
\label{buthereisalwaysmore}
+\sum_{\sigma}x_{\emptyset,\sigma}\otimes\Dsla(S_{e_{\iota}}^{*}x_{\emptyset,\sigma}|W_{\mu,\nu})\xi\\
&\label{decentpart2}\quad 
-\sum_{|\rho|>0,\sigma} S_{e_{\iota}}x_{\rho,\sigma}\otimes \Dsla(x_{\rho,\sigma}|W_{\mu,\nu})\xi\\
&\quad \label{youthoughtyoudseenallcrap}
-\sum_{\sigma} S_{e_\iota}x_{\emptyset,\sigma}\otimes \Dsla(x_{\emptyset,\sigma}|W_{\mu,\nu})\xi
\end{align}
We proceed with \eqref{decentpart1}, using Lemmas \ref{Srelations} and \ref{crackerbewdycorkerripperbonzagrouse} 1.) with $n=|\rho|-|\sigma|$ :
\begin{align*}\eqref{decentpart1}&=\sum_{|\lambda|=1}\sum_{|\rho|>0,\sigma} x_{\lambda\rho,\sigma}\otimes\Dsla((e_{\iota}|e_{\lambda})x_{\rho,\sigma}|W_{\mu,\nu})\xi\\ 
&= \sum_{|\lambda|=1}\sum_{|\rho|>0,\sigma} x_{\lambda\rho,\sigma}\otimes[\Dsla,\alpha^{n}(e_{\lambda}|e_{\iota})](x_{\rho,\sigma}|W_{\mu,\nu})\xi + \sum_{|\lambda|=1}\sum_{|\rho|>0,\sigma} x_{\lambda\rho,\sigma}\alpha^{n}(e_{\lambda}|e_{\iota})\otimes  \Dsla(x_{\rho ,\sigma} | W_{\mu,\nu})\xi\\
&=\sum_{|\lambda|=1}\sum_{|\rho|>0,\sigma} x_{\lambda\rho,\sigma}\otimes[\Dsla,\alpha^{n}(e_{\lambda}|e_{\iota})](x_{\rho,\sigma}|W_{\mu,\nu})\xi +\sum_{|\rho|>0,\sigma} x_{\iota\rho,\sigma}\otimes\Dsla (x_{\rho ,\sigma} | W_{\mu,\nu})\xi\\
&=\sum_{|\lambda|=1}\sum_{|\rho|>0,\sigma} x_{\lambda\rho,\sigma}\otimes[\Dsla,\alpha^{n}(e_{\lambda}|e_{\iota})](x_{\rho,\sigma}|W_{\mu,\nu})\xi -\eqref{decentpart2},
\end{align*}
from which we see that \eqref{decentpart1} and \eqref{decentpart2} add up to
\begin{equation}
\label{bddterm1}
\sum_{|\lambda|=1}\sum_{|\rho|>0,\sigma} x_{\lambda\rho,\sigma}\otimes
[\Dsla,\alpha^{|\rho|-|\sigma|}(e_{\lambda}|e_{\iota})](x_{\rho,\sigma}|W_{\mu,\nu})\xi.
\end{equation} 
We proceed with \eqref{youthoughtyoudseenallcrap} using Lemmas \ref{framecommute}, \ref{Srelations} and \ref{crackerbewdycorkerripperbonzagrouse} 2.):
\begin{align}
\sum_{\sigma} -S_{e_\iota} x_{\emptyset,\sigma}\otimes \Dsla(x_{\emptyset,\sigma}|W_{\mu,\nu})\xi &=
\sum_{\sigma} -x_{\iota,\sigma}\otimes \Dsla(x_{\emptyset,\sigma} | W_{\mu,\nu})\xi-N^{-1/2}\alpha^{-1}(e_{\sigma_{|\sigma|}}|e_{\iota})x_{\emptyset,\underline{\sigma}}\otimes\Dsla (x_{\emptyset,\sigma} | W_{\mu,\nu})\xi\nonumber \\
&=\sum_{\sigma} -x_{\iota,\sigma}\otimes \Dsla(x_{\emptyset,\sigma} | W_{\mu,\nu})\xi-N^{-1/2}x_{\emptyset,\underline{\sigma}}\otimes\Dsla (x_{\emptyset,\sigma}\alpha^{-|\sigma|}(e_{\iota}|e_{\sigma_{|\sigma|}}) | W_{\mu,\nu})\xi\nonumber\\ 
&\quad+N^{-1/2}x_{\emptyset,\underline{\sigma}}\otimes [\Dsla,\alpha^{-|\sigma|}(e_{\sigma_{|\sigma|}}|e_{\iota})] (x_{\emptyset,\sigma} | W_{\mu,\nu})\xi\nonumber \\
&=\sum_{\sigma} -x_{\iota,\sigma}\otimes \Dsla(x_{\emptyset,\sigma} | W_{\mu,\nu})\xi-N^{-1/2}x_{\emptyset,\sigma}\otimes\Dsla (x_{\emptyset,\sigma\iota} | W_{\mu,\nu}) \xi
\label{letsgetreadytorumble}\\
&\quad+N^{-1/2}x_{\emptyset,\underline{\sigma}}\otimes [\Dsla,\alpha^{-|\sigma|}(e_{\sigma_{|\sigma|}}|e_{\iota})] (x_{\emptyset,\sigma} | W_{\mu,\nu})\xi
\label{bddterm2}.
\end{align}
Next, we turn to \eqref{datcrapagain} again applying Lemma \ref{Srelations}:
\begin{align}
\eqref{datcrapagain} 
&=\sum_{|\lambda|=1}\sum_{\sigma}x_{\lambda,\sigma}\otimes \Dsla((e_{\iota}|e_{\lambda})x_{\emptyset,\sigma} | W_{\mu\nu})\xi -\sum_{|\lambda|=1}\sum_{\sigma} N^{-1} x_{\lambda,\sigma}\otimes \Dsla( (e_{\sigma_{|\sigma|}}| e_{\lambda}) x_{\emptyset,\underline{\sigma}\iota} | W_{\mu,\nu})\xi
\nonumber \\
&=\sum_{|\lambda|=1}\sum_{\sigma}x_{\lambda,\sigma}\alpha^{-|\sigma|}(e_{\lambda}|e_{\iota})\otimes\Dsla (x_{\emptyset,\sigma} | W_{\mu\nu})\xi -\sum_{|\lambda|=1}\sum_{\sigma} N^{-1} x_{\lambda,\sigma}\alpha^{-|\sigma|}(e_{\lambda}|e_{\sigma_{|\sigma|}})\otimes\Dsla (  x_{\emptyset,\underline{\sigma}\iota} | W_{\mu,\nu})\xi
\label{roadto}\\
&+\sum_{|\lambda|=1}\sum_{\sigma}x_{\lambda,\sigma}\otimes [\Dsla,\alpha^{-|\sigma|}(e_{\lambda}|e_{\iota})] (x_{\emptyset,\sigma} | W_{\mu\nu})\xi -\sum_{|\lambda|=1}\sum_{\sigma} N^{-1} x_{\lambda,\sigma}\otimes[\Dsla,\alpha^{-|\sigma|}(e_{\lambda}|e_{\sigma_{|\sigma|}})] (  x_{\emptyset,\underline{\sigma}\iota} | W_{\mu,\nu})\xi
\label{bddterm3}.
\end{align}
Considering \eqref{roadto} and applying Lemma  \ref{crackerbewdycorkerripperbonzagrouse} 1.), 2.)  and 3.) we find
\begin{align}
\eqref{roadto} 
=\sum_{\sigma}x_{\iota,\sigma}\otimes\Dsla (x_{\emptyset,\sigma} | W_{\mu\nu})\xi -\sum_{\sigma} \sum_{|\lambda|=1}N^{-1} x_{\lambda,\sigma\lambda}\otimes \Dsla(  x_{\emptyset,\sigma\iota} | W_{\mu,\nu})\xi=\sum_{\sigma}x_{\iota,\sigma}\otimes \Dsla(x_{\emptyset,\sigma} | W_{\mu\nu})\xi. 
\label{salvation}
\end{align}
Lastly, we compute \eqref{buthereisalwaysmore}
\begin{align}
\eqref{buthereisalwaysmore} 
=\sum_{\sigma}x_{\emptyset,\sigma}\otimes \Dsla (S_{e_{\iota}}^{*}x_{\emptyset,\sigma}|W_{\mu,\nu})\xi
=\sum N^{-1/2} x_{\emptyset,\sigma} \otimes\Dsla (x_{\emptyset,\sigma\iota} | W_{\mu,\nu}) \xi.
\label{jesusisreal}
\end{align}
Now we see that \eqref{letsgetreadytorumble},\eqref{salvation} and \eqref{jesusisreal} add up to $0$. 
Thus we are left with \eqref{bddterm1},\eqref{bddterm2} and \eqref{bddterm3}, 
which together yield the expression \eqref{commutator}.
\end{proof}

To ensure the bounded commutators in Theorem \ref{kasppordvbmand}, 
we impose a further condition on $\alpha$ (cf. \cite{BMR})
which is automatically satisfied if $\phi$ is an isometry. 

\begin{prop} 
\label{prop:better-diffeo} 
Let $\phi:M\to M$ be a $C^{1}$-diffeomorphism  
whose associated automorphism $\alpha$ is such that for all $a\in C^{1}(M)$ there exists a $C_a>0$ for which 
$\sup_{\ell\in \Z}\|[\Dsla, \alpha^{\ell}(a)]\|\leq C_{a}$. 
For $|\iota|=1$ the operator $S_{e_\iota}\otimes 1$ maps the core $X$ 
described in Lemma \ref{domainsandshit} into $\Dom \D_{E}$ and 
the commutator $[\D_{E},S_{e_{\iota}}\otimes 1]$ extends to a bounded operator.
\end{prop}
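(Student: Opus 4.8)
The plan is to write $\D_E=D+T$ as in Lemma~\ref{domainsandshit} and to bound the commutators $[D,S_{e_\iota}\ox 1]$ and $[T,S_{e_\iota}\ox 1]$ separately on the core $X$, after first checking that $S_{e_\iota}\ox 1$ preserves $X$. For the last point I would invoke Lemma~\ref{Srelations}: $S_{e_\iota}$ sends each generator $x_{\rho,\sigma}$ of $\aXi^{n,k}_{C^1(M)}$ to a finite $C^1(M)$-combination of generators $x_{\rho',\sigma'}$, so $S_{e_\iota}$ preserves $\bigoplus^{\mathrm{alg}}_{n,k}\aXi^{n,k}_{C^1(M)}$ and therefore $S_{e_\iota}\ox 1$ maps $X$ into $X\subseteq\Dom D\cap\Dom T\subseteq\Dom\D_E$. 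Since $D=\D\ox\gamma$ with $\D=\psi(c,\kappa)=\sum_{n,k}\psi(n,k)P_{n,k}$ right $C(M)$-linear (each $P_{n,k}\in\K_{C(M)}(\aXi_{C(M)})$), on $X$ we have $[D,S_{e_\iota}\ox 1]=[\D,S_{e_\iota}]\ox\gamma$, which is bounded by Theorem~\ref{definunbdd}, where boundedness of $[\D,S_\mu]$ for homogeneous $\mu\in\algFock$ was established. Thus everything reduces to bounding $[T,S_{e_\iota}\ox 1]$.

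For that I would start from the explicit formula \eqref{commutator} of Lemma~\ref{forgettrivialbundles}. Every summand on its right-hand side has the shape $\sum x_a\ox[\Dsla,\alpha^{m}(g)]\,(x_b\,|\,W_{\mu,\nu})\,\xi$, where $g$ runs over the \emph{finite} set $\mathcal{G}\subset C^1(M)$ of inner products $(e_c|e_d)$ of frame vectors, $m\in\Z$ equals $|\rho|-|\sigma|$ or $-|\sigma|$, and $x_a,x_b$ are the global frame vectors of Lemma~\ref{globalframe}. Two observations make this manageable. First, for $f\in C^1(M)$ the operator $[\Dsla,f]$ is of order zero (Clifford multiplication by $df$), hence a $C(M)$-linear bounded bundle endomorphism; and the hypothesis on $\phi$ gives $\sup_{\ell\in\Z}\|[\Dsla,\alpha^{\ell}(g)]\|\leq C_g<\infty$, with $C:=\max_{g\in\mathcal{G}}C_g<\infty$ since $\mathcal{G}$ is finite. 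Second, being $C(M)$-linear, $[\Dsla,\alpha^m(g)]$ commutes past the scalar $(x_b\,|\,W_{\mu,\nu})\in C(M)$.

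Then, for each fixed $g$ and each of the finitely many term-types in \eqref{commutator}, I would regroup the sum over frame indices according to the degrees $n=|\rho|-|\sigma|$ and $k=|\sigma|$ (on which $m$ depends). For each $(n,k)$ the relevant collection $\{x_{\rho,\sigma}\}_{|\rho|=n+k,\,|\sigma|=k}$ is a Parseval frame for the finitely generated projective module $\aXi^{n,k}_{C(M)}$ by Lemma~\ref{globalframe}, so the corresponding analysis and synthesis operators are contractions, and the ``shift'' maps $w'\mapsto\sum x_{a(\rho,\sigma)}(x_{\rho,\sigma}\,|\,w')$ occurring in \eqref{commutator} (for example $x_{\rho,\sigma}\mapsto x_{\lambda\rho,\sigma}$, respectively $x_{\emptyset,\sigma}\mapsto x_{\emptyset,\underline{\sigma}}$) are bounded maps between the various $\aXi^{n,k}_{C(M)}$, with norms depending only on $N$ and $M$. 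Using the second observation, each such term then equals $(\mathrm{shift}\ox 1)\circ(1\ox B^g)$ where $B^g$ is block-diagonal over $(n,k)$ with blocks $[\Dsla,\alpha^{m(n,k)}(g)]$, so $\|1\ox B^g\|\leq C$ and the term is bounded by a constant multiple of $C$. Summing the finitely many contributions (over the single indices $\lambda$, over $g\in\mathcal{G}$, and over the term-types) gives that $[T,S_{e_\iota}\ox 1]$ is bounded on $X$; hence $[\D_E,S_{e_\iota}\ox 1]=[D,S_{e_\iota}\ox 1]+[T,S_{e_\iota}\ox 1]$ extends to a bounded operator on $\mathcal{H}$, which is what is claimed.

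The hard part is precisely this last regrouping: since the inner commutator $[\Dsla,\alpha^{|\rho|-|\sigma|}(\cdot)]$ depends on the summation index, it cannot simply be factored out of the sums in \eqref{commutator}; one has to organise the sum by degree, recognise the result as a frame co-isometry followed by a \emph{block}-diagonal operator followed by a frame synthesis, and then invoke exactly the uniform bound $\sup_{\ell}\|[\Dsla,\alpha^{\ell}(a)]\|<\infty$ to control that block-diagonal piece. This uniform bound is the only place where the hypothesis on $\phi$ enters; everything else is bookkeeping with the frame of Lemma~\ref{globalframe}.
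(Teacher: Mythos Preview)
Your proposal is correct and follows essentially the same approach as the paper: split $\D_E=D+T$, dispose of $[D,S_{e_\iota}\ox 1]$ via Theorem~\ref{definunbdd}, and bound $[T,S_{e_\iota}\ox 1]$ by invoking the explicit commutator formula of Lemma~\ref{forgettrivialbundles} and factoring each term as (frame analysis)--(block-diagonal operator)--(frame synthesis), the middle piece being controlled by the equicontinuity hypothesis on~$\alpha$. The paper writes this factorization concretely as $V^*M_\iota V$ with $V$ a partial isometry into $\bigoplus_{\rho,\sigma}L^2(M,S)$, which avoids your appeal to the $C(M)$-linearity of $[\Dsla,f]$; otherwise the arguments coincide.
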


\begin{proof} 
It is clear that $S_{e_{\iota}}$ maps $X$ into itself. 
For the commutator, observe that 
$$[\mathcal{D}_{E},S_{e_\iota}\otimes 1]=[D,S_{e_\iota}\otimes 1]+[T,S_{e_\iota}\otimes 1],$$ 
and $[D,S_{e_\iota}\otimes 1]$ is bounded by construction. For 
$[T,S_{e_\iota}\otimes 1]$ we use Lemma \ref{forgettrivialbundles} 
and analyse the four terms in Equation \eqref{commutator}. 
All terms can be shown to be bounded by a similar method. For instance, consider 
\begin{equation}
\label{part} 
W_{\mu,\nu}\otimes \xi\mapsto \sum_{|\lambda|=1}\sum_{|\rho|>0,\sigma} 
x_{\lambda\rho,\sigma}\otimes[\Dsla,\alpha^{|\rho|-|\sigma|}(e_{\lambda}|e_{\iota})]
(x_{\rho,\sigma}|W_{\mu,\nu})\xi.
\end{equation}
Consider the partial isometry
\[
V:\left(\bigoplus_{k, n}{}_{\alpha}\Xi_{C(M)}^{n,k}\right)\otimes_{C(M)}L^{2}(M,S)
\to \bigoplus_{|\rho|>0,\sigma}L^{2}(M,S),\quad 
W_{\mu,\nu}\otimes \xi\to ( (x_{\rho,\sigma}|W_{\mu,\nu})\xi)_{\rho,\sigma},
\]
and the map $M_{\iota}$ defined through $M_{\iota}:=\diag_{n,k}(M_{\iota}^{n,k})$, where 
\[ 
M_{\iota}^{n,k}:\bigoplus_{|\rho|=n+k, |\sigma|=k}L^{2}(M,S)
\to \bigoplus_{|\rho|=n+k+1,|\sigma|=k} L^{2}(M,S),\quad (M_{\iota}^{n,k}\xi)_{\rho,\sigma}
:=([\Dsla,\alpha^{|\rho|-|\sigma|-1}(e_{\rho_{1}}|e_{\iota})])\xi_{\overline{\rho},\sigma}.
\] The operator in Equation \eqref{part} then coincides with the composition 
$V^{*}M_{\iota}V$. It thus suffices to show that 
$\sup_{n,k}\|M^{n,k}_{\iota}\|<\infty$, which follows from 
the assumption that $\sup_{\ell\in \Z}\|[\Dsla, \alpha^{\ell}(a)]\|\leq C_{a}$ 
for each $a$ and the fact that the frame $e_{\lambda}$ has 
finitely many elements. Thus \eqref{part} defines a bounded operator. 
The other summands in Equation \eqref{commutator} can be 
shown to be bounded by a similar argument.
\end{proof}


\begin{thebibliography}{37}

\bibitem{AFR} F. Arici, A. Rennie, {\em Cuntz-Pimsner extension and mapping cone 
exact sequences}, arXiv:1605.08593.

\bibitem{BJ} S. Baaj, P. Julg,
{\em Th\'eorie bivariante de Kasparov et op\'erateurs non born\'ees dans
les $C^*$-modules hilbertiens},
C. R. Acad. Sci. Paris, \textbf{296} (1983), 875--878.

\bibitem{BMR} J. Bellissard, M. Marcolli, K. Reihani, 
\emph{Dynamical systems on spectral metric spaces}, arXiv:1008.4617.



\bibitem{careyetal11} A. L. Carey, S. Neshveyev, R. Nest, A. Rennie, 
\emph{Twisted cyclic theory, equivariant $KK$-theory and KMS states}, 
J. Reine Angew. Math., {\bf 650} (2011),  161--191.




\bibitem{CPR} A. Carey, J. Phillips, A. Rennie, 
{\em Noncommutative Atiyah-Patodi-Singer boundary conditions and index pairings in $KK$-theory}, 
J Reine Angew. Math., {\bf 643} (2010),  59--109.

\bibitem{CPR2} A. Carey, J. Phillips, A. Rennie, {\em Twisted cyclic theory and an index 
theory for the gauge invariant KMS state on Cuntz algebras}, J. $K$-Theory, {\bf 6} 
no. 2 (2010),   339--380.

\bibitem{connesbook} A. Connes, \emph{Noncommutative geometry}, Academic Press, Inc., San Diego, CA, 1994. xiv+661.

\bibitem{CK} J. Cuntz, W. Krieger, \emph{A class of $C^{*}$-algebras and topological Markov chains}, Invent. Math. {\bf 56}, no. 3 (1980), pp 251-268.


\bibitem{Dad} M. Dadarlat, 
{\em The $C^*$-algebra of a vector bundle},
J. Reine Angew. Math., {\bf 670} (2012),  121--144.

\bibitem{deaacaneudoaod} V. Deaconu, 
{\em Generalized solenoids and $C^*$-algebras}, 
Pacific J. Math., {\bf 190} no. 2 (1999), 247--260.

\bibitem{dgmw} R. J. Deeley, M. Goffeng, B. Mesland, M. Whittaker, 
\emph{Wieler solenoids, Pimsner algebras and $K$-theory}, arXiv:1512.03455.



\bibitem{gabrielgrensing} O. Gabriel, M. Grensing, 
\emph{Spectral triples and generalized crossed products}, arXiv:1310.5993.

\bibitem{GM} M. Goffeng, B. Mesland, 
\emph{Spectral triples and finite summability on Cuntz-Krieger algebras}, 
Doc. Math., {\bf 20} (2015),  89--170.

\bibitem{KajPinWat} T. Kajiwara, C. Pinzari, Y. Watatani, 
\emph{Jones index theory for {H}ilbert {$C\sp*$}-bimodules and its equivalence with conjugation theory}, 
J. Funct. Anal., \textbf{215} (2004),  1--49.
    
\bibitem{kaminkerputnam} J. Kaminker, I. Putnam, 
\emph{$K$-theoretic duality for shifts of finite type}, 
Comm. Math. Phys., {\bf 187} no. 3 (1997),   509--522.

\bibitem{Kas1} G. G. Kasparov, \emph{The Operator $K$-functor and extensions of
$C^{*}$-algebras}, Izv. Akad. Nauk. SSSR Ser. Mat. {\bf 44} (1980), 571--636; English
translation, Math. USSR-Izv., {\bf 16} (1981), 513--572.

\bibitem{K2} T. Katsura, 
{\em On $C^*$-algebras associated with $C^*$-correspondences},
J. Funct. Anal., {\bf 217} (2004), 366--401.

\bibitem{Kucerovsky} D. Kucerovsky, \emph{The $KK$-product of unbounded modules}, 
$K$-Theory, \textbf{11} (1997), 17--34.

\bibitem{lancesbook} E. C. Lance, \emph{Hilbert $C^*$-modules. A toolkit for operator algebraists.} 
London Mathematical Society Lecture Note Series, 210. Cambridge University Press, Cambridge, 1995. x+130 pp.

\bibitem{MeslandCrelle} B. Mesland, 
\emph{Unbounded bivariant $K$-theory and correspondences in noncommutative geometry}, 
J. Reine Angew. Math., {\bf 691}  (2014), 101--172.

\bibitem{paskrennie} D. Pask, A. Rennie, \emph{The noncommutative geometry of graph $C\sp *$-algebras. I. The index theorem}, 
J. Funct. Anal. 233 (2006), no. 1, p. 92--134. 

\bibitem{Pimsner} M. Pimsner, {\em A class of $C^*$-algebras generalising both Cuntz-Krieger 
algebras and crossed products by $\Z$}, in `Free Probability Theory,
Fields Institute Communications' {\bf 12} (1997), ed. D. Voiculescu, 189--212.


\bibitem{Ren1} J. Renault, \emph{A groupoid approach to $C^{*}$-algebras}, LNM 793, Springer 1980.

\bibitem{Ren2} J. Renault, \emph{Cuntz-like algebras}, 
Operator theoretical methods (Timisoara, 1998) Theta Found, Bucharest, 2000,  371--386.

\bibitem{RRS} D. Robertson, A. Rennie, A. Sims, {\em The extension class and 
KMS states for Cuntz--Pimsner algebras of some bi-Hilbertian bimodules}, to appear in J. Topol. Anal., arXiv:1501:05363.

\bibitem{Vass} E. Vasselli, {\em The $C^*$-algebra of a vector bundle and fields of 
Cuntz algebras}, J. Funct. Anal.,  {\bf222} (2005), 491--502.

\bibitem{W} W. Szyma\'nski, {\em Bimodules for Cuntz-Krieger algebras of infinite matrices}, Bull. Austral. Math. Soc., {\bf 62} (2000), 87--94.
\end{thebibliography}
\end{document}